\documentclass[11pt]{article}

\usepackage{import}
\usepackage[utf8]{inputenc}
\usepackage[T1]{fontenc}
\usepackage{amsmath}
\usepackage{amsthm}
\usepackage{amsfonts}
\usepackage{amssymb}
\usepackage[vmargin=3.0cm]{geometry}
\usepackage[all]{xy}
\usepackage{enumitem}
\usepackage{color}
\usepackage{setspace}
\usepackage[hidelinks]{hyperref}
\usepackage{graphicx}
\usepackage[noabbrev]{cleveref}
\usepackage{comment}



\newcommand{\ZZ}{\mathbb{Z}}

\newcommand{\RR}{\mathbb{R}}
\newcommand{\CC}{\mathbb{C}}

\newcommand{\NN}{\mathbb{N}}
\newcommand{\TT}{\mathbb{T}}

\newcommand{\A}{\mathcal{A}}
\newcommand{\B}{\mathcal{B}}
\newcommand{\C}{\mathcal{C}}

\renewcommand{\L}{\mathcal{L}} 
\newcommand{\LC}{\mathcal{L}C}
\newcommand{\LH}{\mathcal{L}H}
\newcommand{\M}{\mathcal{M}}
\newcommand{\Mbar}{\overline{\mathcal{M}}}

\newcommand{\N}{\mathcal{N}}
\renewcommand{\P}{\mathcal{P}} 
\newcommand{\R}{\mathcal{R}} 

\newcommand{\T}{\mathcal{T}} 
\newcommand{\W}{\mathcal{W}}

\newcommand{\pp}{\mathbf p}

\newcommand{\ra}{\rightarrow}
\newcommand{\hra}{\hookrightarrow}

\newcommand{\lra}{\longrightarrow}
\newcommand{\Ra}{\Rightarrow}
\newcommand{\mt}{\mapsto}

\renewcommand{\a}{\alpha}
\renewcommand{\b}{\beta}
\renewcommand{\d}{\delta} 
\newcommand{\e}{\varepsilon} 
\renewcommand{\i}{\iota}

\newcommand{\g}{\gamma}
\newcommand{\Ga}{\Gamma}
\renewcommand{\l}{\lambda}
\newcommand{\La}{\Lambda}

\renewcommand{\sp}{\mathfrak{s}}
\newcommand{\s}{\sigma}
\newcommand{\Si}{\Sigma}
\renewcommand{\t}{\tau}
\renewcommand{\th}{\theta}
\newcommand{\w}{\omega}

\newcommand{\vphi}{\varphi}

\newcommand{\Area}{\text{Area}}
\newcommand{\Aut}{\text{Aut}}

\newcommand{\Core}{\text{Core}}
\newcommand{\Cont}{\text{Cont}}
\newcommand{\Crit}{\text{Crit}}

\newcommand{\eu}{\text{eu}}

\newcommand{\ext}{\text{ext}}

\newcommand{\im}{\text{Im }}
\newcommand{\Id}{\text{Id}}

\newcommand{\st}{\text{st}}
\newcommand{\Symp}{\text{Symp}}

\renewcommand{\top}{\text{top}}

\newcommand{\colim}{\text{colim }}

\newcommand{\Vect}{\text{Vect}}
\newcommand{\dd}{\partial}
\newcommand{\ity}{\infty} 

\newcommand{\rest}{\restriction}

\renewcommand{\over}{\overset}
\newcommand{\under}[2]{\underset{#2}{#1}}
\newcommand{\sms}{\setminus}
\newcommand{\mr}{\mathring}

\newcommand{\sm}{\footnotesize}

\newcommand{\<}{\left<}
\renewcommand{\>}{\right>}

\setcounter{secnumdepth}{3}
\setlength\parindent{0cm}
\setlist{noitemsep,topsep=0pt,parsep=0pt,partopsep=0pt}

\setlength{\belowdisplayskip}{4pt} 
\setlength{\abovedisplayskip}{4pt} 

\newtheorem{thm}{Theorem}[section]
    \newtheorem{lem}[thm]{Lemma}
    \newtheorem{prop}[thm]{Proposition}
    \newtheorem{cor}[thm]{Corollary}
    \newtheorem{conj}[thm]{Conjecture}

\newtheorem{MainThm}{Theorem}

\newtheorem*{thm*}{Theorem}
\newtheorem{MainConj}[MainThm]{Conjecture}

\theoremstyle{remark}
    \newtheorem{rmk}[thm]{Remark}
    \newtheorem{rmks}[thm]{Remark}
    \newtheorem{ex}[thm]{Example}
    \newtheorem{exs}[thm]{Examples}

\theoremstyle{definition}
    \newtheorem{dfn}[thm]{Definition}

\title{Wrapped sutured Legendrian homology and unit conormal of local 2-braids}
\author{Côme Dattin}

\begin{document}

\maketitle

\begin{center}
\begin{abstract}
\noindent We extend the sutured framework to the case of Legendrians with boundary. Using ideas from Lagrangian Floer theory, we define the cylindrical and the wrapped sutured Legendrian homologies of a pair of sutured Legendrians. They fit together into an exact sequence, and the exact triangle is invariant along an Legendrian isotopy fixed at the boundary. For a single Legendrian, we also define a wrapped version of its Chekanov-Eliashberg dga. Our main example of sutured Legendrian is obtained via the unit conormal construction : a submanifold $N \subset M$, such that $\dd N \subset \dd M$, induces a sutured Legendrian $\La_N \subset ST^*M$, thus we get smooth invariants of manifolds with boundary. As a simple application, we show that if the conormals of two local 2-braids are isotopic (as Legendrians with fixed boundary), then the braids are equivalent.
\end{abstract}
\end{center}
 
In the literature, contact manifolds  are usually assumed to be either closed \cite{Par15_CH_&_virtual}, or the contactisation of some Liouville domain \cite{EES05_LCH_PxR}. A few years ago the notion of {\it  sutured contact manifold}, which generalise both of those settings, was introduced by Colin-Ghiggini-Honda-Hutchings in \cite{CGHH10_Sutures}, where they define the contact homology of such manifolds. In this paper we introduce the notion of {\it sutured Legendrian} and, adapting ideas from Lagrangian Floer theory, we define the {\it cylindrical} and the {\it wrapped  sutured Legendrian homology} of a pair. They fit together into an exact sequence, and the induced exact triangle yields an invariant of sutured Legendrians with fixed boundary. 

We illustrate this construction with an example obtained from the smooth geometry. Starting with a submanifold $N \subset M$, such that $\dd N \subset \dd M$, we construct a sutured contact manifold from $ST^*M$, as in \cite{Dat22_Conormal_stops_Hyperbolic_knots}. Then the unit conormal of $N$ induces a sutured Legendrian, thus the previous exact triangle yields an invariant of $N$, seen as a submanifold with fixed boundary.

As a simple application, we show that if two local 2-braids have unit conormals which are isotopic as Legendrians with fixed boundary, then the two braids are equivalent. This can be seen as a relative version of the analogous result for knots from \cite{She16_Conormal_torus_knot_inrt}\cite{ENS16_Complete_knot_invrt} (see also \cite{Dat22_Conormal_stops_Hyperbolic_knots} for a different point of view), in a much simpler case. The general result, which requires more technology, is only sketched and will be the subject of subsequent work.

\subsection*{Sutured setting}

\paragraph{Sutured contact manifold.} Originally introduced in the topological world by Gabai \cite{Gab83_Folia} as a tool to study 3-manifolds, the definition was adapted to the contact setting in \cite{CGHH10_Sutures}, as a way to handle contact manifolds with convex boundary. Let us briefly present those objects.

Loosely speaking, a (balanced, completed) sutured contact manifold $(V, \xi = \ker \l)$ is a non-compact contact manifold which, at infinity, looks like the contactisation of some Liouville domain $(W, \b)$ :
\[ (V, \l) \simeq (\RR_t \times W, dt+\b) \qquad \qquad \text{ away from a compact.}  \]
More precisely, a sutured contact manifold $(V, \xi)$ is a compact manifold with boundary and corners, such that
\[ \dd V = W_+ \cup [-1,1] \times \Ga \cup W_-, \]
where $W_\pm$ are two Liouville domains with common contact boundary $\Ga$. Moreover the contact structure must satisfy some properties on a neighbourhood of the boundary, see \cref{ssec.Sut_leg} for a detailed definition.
The contact manifold $(\Ga, \l_\Ga) = (\dd W_\pm, \b_\pm)$ is called the {\it suture}, and can be seen as a codimension 2 contact submanifold in $(V, \xi)$.
Following \cite{CGHH10_Sutures}, we can complete $V$ into a non-compact contact manifold, and if $(V, \xi)$ is {\it balanced}, ie $(W_+, \b_+) \simeq (W_-, \b_-)$, we recover the previous loose description.

Note that, contrary to the compact case, we specify a contact form, and not only the contact structure, because we want some control at infinity. As a consequence of those constraints, it is proven in \cite{CGHH10_Sutures} that the contact homology of such a manifold is well-defined (after picking a well-behaved almost complex structure).

Let us give some examples : first, while the trivialisation of a Liouville domain is a trivial case, applying any operation which only affects a compact (for example surgeries) yields a sutured contact manifold.
As a second source of examples, it is shown in \cite{CGHH10_Sutures} that given a contact manifold with  convex boundary, as defined in \cite{Gir91_Convexity}, one can construct a sutured contact manifold, and vice-versa. In particular, the complement of a standard neighbourhood of a Legendrian (in a closed contact manifold) yields a sutured contact manifold.
Finally, and as alluded to above, the unit bundle of a manifold $M$ with boundary has convex boundary. Thus, applying the previous construction, we get a a sutured contact manifold which is, at infinity, the contactisation of $T^*(\dd M)$. In particular, the suture is $ST^*(\dd M)$, see \cref{ssec.ExVarSut} for more details (as well as \cite{Dat22_Conormal_stops_Hyperbolic_knots}).

\paragraph{Sutured Legendrian.} We now add a Legendrian submanifold to the picture, which is also non-compact but will behave nicely at infinity. Roughly speaking, a sutured Legendrian $\La \subset (V, \l)$ is a Legendrian which, at infinity, is {\it cylindrical} and {\it centered} :
\[ (\La, V, \l) \simeq (\RR \times W, \{0\} \times \La_\Ga, dt+\b) \qquad \qquad \text{ away from a compact,} \]
where $\La_\Ga \subset \Ga$ is a Legendrian submanifold, which will also be denoted $\dd \La$.  One should think about this definition as generalising the notion of exact cylindrical Lagrangian in a Liouville domain. Indeed, any such submanifold lift to a sutured Legendrian in the contactisation.
As mentioned above, the main source of example will be applying the unit conormal construction to a submanifold $N \subset M$ with boundary $\dd N \subset \dd M$, such that $N$ intersect $\dd M$ transversely. We get a sutured Legendrian $\La_N$, so that
\[ (\La_N, ST^*M) \simeq (\RR \times \La_{\dd N}, \RR \times T^*(\dd M))  \qquad \qquad \text{ away from a compact,}  \]
where $\La_N \subset ST^*(\dd M) \simeq  \dd T^*(\dd M)$ is the unit conormal of $\dd N$ in $\dd M$.

\subsection*{Main results}

We first adapt the convex-sutured construction from \cite{CGHH10_Sutures} to the Legendrian case : given a Legendrian submanifold $\La \subset (V, \xi)$, such that 
\begin{itemize}
    \item $\La$ is transverse to the boundary of $V$,
    \item $\dd V$ is W-convex (meaning that both its positive and negative region admit a Weinstein structure),
    \item there is a contact vector field $X$, transverse to the boundary, whose associated dividing set contains $\dd \La$,
\end{itemize}
we construct a sutured contact manifold containing a sutured Legendrian. Thus all invariants of sutured Legendrian defined below will be invariants of Legendrians inside contact manifolds with convex boundary (such that $\dd \La$ is in the dividing set).

\subsubsection*{Sutured Legendrian homologies}
We define several invariants of sutured Legendrian, mimicking constructions from Lagrangian Floer theory. The main difference with \cite{CGHH10_Sutures} is that the Legendrian is non-compact, and that Reeb chords will not be contained in a compact. To constrain the relevant holomorphic curves, we rely on several maximum principles for curves with Lagrangian boundary conditions, where both the contact manifolds and the Legendrians inducing the boundary condition are non-compact. Nevertheless, for a well-behaved almost complex structure, we can still control the curves so that Gromov's compactness theorem holds \cite{Gro85_Hol_curves}.

\paragraph{Cylindrical sutured homology.} Given a sutured Legendrian $\La \subset (V,\l)$, consider the dga $\LC(\La, V, \l)$ generated by Reeb chords, or the complex $LC(\La, V, \l)$ if some additional assumptions are satisfied (we assume that the contact form is hypertight so we can ignore Reeb orbits). The differential counts holomorphic disks in the symplectisation of $V$, and with boundary on $\RR \times \La$. Although both $\La$ and $V$ are non-compact, this indeed defines a differential, and the homology is an invariant of the sutured Legendrian.

\begin{MainThm}\label{MThm.Cylindrical_homology}
The homology $\LH(\La, V, \l)$ is well-defined and independent of the choices. Moreover it is invariant under sutured Legendrian isotopy.
\end{MainThm}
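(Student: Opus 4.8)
The plan is to establish three things: (1) $\partial^2 = 0$, so that $\LC(\La, V, \l)$ is genuinely a complex/dga; (2) the homology is independent of the auxiliary choices (almost complex structure, perturbations); and (3) the homology is invariant under sutured Legendrian isotopy fixed at the boundary. The overarching technical difficulty, which pervades all three steps, is that both $\La$ and $V$ are non-compact, so Reeb chords are a priori not confined to a compact region and the moduli spaces of holomorphic disks could fail to be compact. The central task is therefore to show that the relevant maximum principles (advertised just above the statement) confine the holomorphic curves to a compact set, so that Gromov compactness \cite{Gro85_Hol_curves} applies in this sutured setting.

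**First I would** fix a well-behaved almost complex structure $J$ on the symplectisation $\RR \times V$, cylindrical and compatible with $\l$, and chosen so that at infinity it matches the product structure coming from the contactisation model $(\RR_t \times W, dt + \b)$. Using the cylindrical and centered behavior of $\La$ at infinity, together with the Liouville/Weinstein structure on $W$, I would invoke the maximum principle in the $t$-coordinate (and separately in the symplectisation $\RR$-coordinate) to prove an a priori confinement estimate: any $J$-holomorphic disk with boundary on $\RR \times \La$ and asymptotic to Reeb chords of $\La$ must stay within a fixed compact subset of $\RR \times V$, depending only on the chords. This is the step I expect to be \emph{the main obstacle}, since one must simultaneously control the curve against the non-compactness of the ambient $V$ and the non-compactness of $\La$; the argument should combine the standard symplectisation maximum principle with a separate estimate using the Liouville flow at infinity to rule out escape into the cylindrical end, exactly analogous to the confinement of curves in wrapped Floer theory. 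Granting this confinement, Gromov compactness yields compactified moduli spaces whose boundary strata are the usual broken configurations; SFT-style transversality (generic $J$, together with the hypertightness assumption that excludes Reeb orbits) gives the needed regularity, and the standard gluing and signed count of the one-dimensional moduli spaces gives $\partial^2 = 0$.

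**Next,** for independence of choices I would run the usual continuation/bifurcation argument: given two admissible data $(J_0, \dots)$ and $(J_1, \dots)$, interpolate through a generic path and count disks in the parametrized moduli space to build a chain map, with a chain homotopy showing it is a quasi-isomorphism; the same confinement estimate, now applied uniformly along the compact parameter interval, keeps all curves in a fixed compact set so that compactness persists. For invariance under sutured Legendrian isotopy fixed at the boundary, I would use that such an isotopy is generated by a compactly supported contact Hamiltonian (it fixes $\dd \La$ and the cylindrical end), and reduce to the independence-of-choices argument by the standard trick of realizing the isotopy as a deformation of the almost complex structure / Legendrian boundary condition, or alternatively by counting disks in the mapping-cylinder picture; again the key point is that, because the isotopy is fixed at infinity, the confinement estimate holds uniformly and yields a quasi-isomorphism between the two complexes.

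**Finally,** I would assemble these into the statement: $\partial^2 = 0$ defines $\LH(\La, V, \l)$, the chain maps from independence-of-choices show it does not depend on $J$ or perturbation data up to canonical isomorphism, and the isotopy-invariance maps show that isotopic sutured Legendrians have isomorphic homology. Throughout, the hypertightness hypothesis is what lets me ignore Reeb orbits and work with a genuine differential on chords alone, and the sutured/cylindrical structure at infinity is what makes the maximum principles available; the essential new content relative to \cite{CGHH10_Sutures} is precisely the confinement argument for a \emph{non-compact} Legendrian boundary condition, which is where I would concentrate the analytic effort.
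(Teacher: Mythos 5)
Your confinement argument for $\dd^2=0$ and for independence of $J$ is essentially the paper's: for a tailored almost complex structure the horizontal ($\t$) bound comes from projecting the curve to the symplectisation of the suture $(\Ga,\l_\Ga)$, where it becomes $J_\Ga$-holomorphic and the cylindrical boundary condition lets the symplectisation maximum principle apply up to the boundary; the vertical ($t$) bound comes from plurisubharmonicity of $t$ in the Stein case or, in general, from the estimate of \cite{CGHH10_Sutures}, which depends on the topological complexity of the curve and not only on the asymptotic chords (your ``depending only on the chords'' glosses over this, though it is harmless once the topology of the domain is fixed).

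The genuine gap is in the isotopy invariance. The theorem allows the boundary $\dd\La$ to move inside the suture during the isotopy (the paper stresses this immediately after the statement, and \cref{InvrtCyl} even lets the contact structure and the adapted form vary), whereas you reduce to isotopies fixed at the boundary, generated by compactly supported contact Hamiltonians. When $\dd\La^u$ moves, the exact Lagrangian cobordism in $\RR_s\times V^*$ interpolating between $\La^0$ and $\La^1$ is no longer a product over $\{\t\ge 0\}$, so the $\t$-maximum principle for the continuation curves fails a priori and your continuation/mapping-cylinder argument breaks down exactly where the non-compactness bites. The paper's proof inserts an extra step: it builds an auxiliary sutured manifold $(\tilde V,\tilde\l,\tilde J)$ whose region $\{\t\ge 0\}$ is the isotopy cobordism induced by $\dd\La^u$ in the symplectisation of $(\Ga,\l_\Ga)$, equipped via \cref{PrincDuMaxCobord} with an almost complex structure for which holomorphic curves with boundary on the interpolating Lagrangian admit no $\t$-maximum; curves asymptotic to interior chords therefore stay in $\{\t<0\}$, the complex of the auxiliary Legendrian coincides on the nose with $LC(\La^0;V,\l^0,J^0)$, and one is reduced to an isotopy fixed near infinity, which is the case you treat. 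Without this step (or an equivalent device) the statement you prove is strictly weaker than the theorem, and the same issue already arises for ``independence of the choices'' when the adapted contact form is modified near the suture.
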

We emphasise that, although the boundary $\dd \La$ can move during the isotopy, the sutured Legendrian must stay cylindrical (and in a $t$-level) at infinity.

\paragraph{Wrapped sutured homology.} This version is only defined for a pair $\La_0, \La_1 \subset (V, \l)$ of disjoint sutured Legendrians (here we assume that they are both without contractible chord). Pushing the first one by a contact vector field induced by a function quadratic at infinity (and vanishing on the compact part), we get a Legendrian $\La_0^\W$ which is not sutured anymore. Note that when $V$ is a contactisation, and $\La_0$ is the lift of an exact Lagrangian $L_0$, the wrapped Legendrian $\La_0^\W$ can be seen as the lift of the Lagrangian obtained by pushing $L$ via an Hamiltonian quadratic at infinity.

With this procedure, we have created Reeb chords going from $\La_0$ to $\La_1$, which correspond to chords between the boundaries in the suture :
\[ \C(\La_0^\W, \La_1; V, \l) = \C(\La_0, \La_1; V, \l) \cup \C(\dd \La_0, \dd \La_1; \Ga, \l_\Ga). \]
Denote $\W LC(\La_0, \La_1; V, \l)$ the complex generated by those chords, where the differential counts  holomorphic strips in the symplectisation of $V$, with boundary on $\RR\times (\La_0^\W \cup \La_1)$. While chords can now form a non-compact set, this still defines a differential, and the homology is invariant in the following sense

\begin{MainThm}\label{MThm.Sutured_homology}
The homology $\W LH(\La_0, \La_1; V, \l)$ is well-defined and independent of the choices. Moreover it is invariant under a sutured Legendrian isotopy $(\La_0^u, \La_1^u)_{u \in [0,1]}$ such that
\begin{itemize}
    \item for all $u$, $\La_0^u$ and $\La_1^u$ are disjoints ;
    \item $\La_0^u$ is cylindrical, included in the level $\{t=0\}$ ;
    \item $\La_1^u$ is cylindrical, included in the level $\{t = t_u \leq 0\}$, with $t_u =0$ for $u \in \{0,1\}$.
\end{itemize}
\end{MainThm}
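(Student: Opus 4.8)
The plan is to establish the three standard pillars of a Floer-type invariance statement for the complex $\W LC(\La_0, \La_1; V, \l)$: that the differential squares to zero, that the resulting homology is independent of the auxiliary data (the almost complex structure and the wrapping function), and that it is unchanged under the prescribed isotopies. As in \cref{MThm.Cylindrical_homology}, all three rest on the same analytic core, namely that the relevant moduli spaces of $\dbar_J$-holomorphic strips in the symplectisation, with boundary on $\RR \times (\La_0^\W \cup \La_1)$, are compact and, for generic data, cut out transversally with the expected dimension. Granting this, $\W LH$ is simply the homology of $(\W LC, \dd)$.

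For $\dd^2 = 0$ I would examine the compactified $1$-dimensional moduli spaces of index-$2$ strips modulo the $\RR$-action. Their boundary consists of two-level broken strips, and a gluing argument identifies these ends bijectively with the composable pairs of rigid strips counted by $\dd^2$; hence the signed count vanishes. The inputs here are the exactness of the set-up, which forbids sphere and disk bubbling off the interior, together with the no-contractible-chord hypothesis on $\La_0$ and $\La_1$, which excludes the formation of boundary bubbles, and the hypertightness of $\l$, which removes Reeb-orbit breaking in the interior. Independence of the choice of data then follows by the usual continuation argument: an interpolating family of $(J,\text{wrapping})$ yields a chain map, a homotopy of homotopies yields a chain homotopy, and composing in both directions recovers the identity up to homotopy, so the two homologies are isomorphic.

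Isotopy invariance is the heart of the statement. Given an admissible family $(\La_0^u, \La_1^u)_{u\in[0,1]}$ I would realise it by a (possibly non-cylindrical) Lagrangian cobordism, the trace of the isotopy, inside the symplectisation, and count rigid strips with this moving boundary condition to define a chain map $\Phi \colon \W LC(\La_0^0, \La_1^0; V,\l) \to \W LC(\La_0^1, \La_1^1; V,\l)$. The three hypotheses are exactly what makes this legitimate: disjointness of $\La_0^u$ and $\La_1^u$ for every $u$ guarantees that no chord is created or destroyed on the compact part during the deformation; the normalisation $\La_0^u \subset \{t=0\}$ fixes the reference level for the wrapping; and the constraint $\La_1^u \subset \{t = t_u \leq 0\}$ with $t_u = 0$ at the endpoints keeps the action of chords monotone under the trace, so that $\Phi$ respects the action filtration and is well-defined. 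Running the reversed isotopy produces $\Psi$ in the other direction, and a concatenation argument, contracting the trace of an isotopy followed by its reverse, shows $\Psi \circ \Phi$ and $\Phi \circ \Psi$ are chain homotopic to the identity, giving the claimed invariance.

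The main obstacle is the compactness asserted in the first paragraph, precisely because, unlike in \cite{CGHH10_Sutures}, the Legendrians are non-compact and, after wrapping, the chord set $\C(\La_0^\W, \La_1; V, \l)$ is itself non-compact at infinity. I would confine the strips to a compact region by combining two maximum principles: one in the symplectisation $\RR$-direction, controlling the curves near the positive and negative ends, and one in the cylindrical $W$-direction of $(V,\l) \simeq (\RR_t \times W, dt + \b)$, preventing the curves from escaping along the Liouville coordinate where $\La_0^\W$ becomes quadratic. Together with the action bound coming from the level conditions $\La_0 \subset \{t = 0\}$ and $\La_1 \subset \{t = t_u \leq 0\}$, which forces only finitely many chords below any given action, these estimates yield the uniform energy and $C^0$ bounds needed for Gromov's compactness theorem, after which the boundary analysis and the continuation maps above go through as in the closed case.
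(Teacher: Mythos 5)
Your outline is the standard Floer-theoretic skeleton and it does identify the right analytic inputs (the two maximum principles, the action filtration), but it glosses over the one point where this theorem genuinely differs from \cref{MThm.Cylindrical_homology}, and that is exactly where the paper has to work. The remark immediately after the statement warns that, for intermediate $u$, an $\RR_+$-family of Reeb chords can be born or die \emph{in the non-compact part} $\{\t>0\}$: since the boundaries $\dd\La_0^u,\dd\La_1^u\subset\Ga$ are allowed to move, the generating set $\C(\La_0^\W,\La_1;V,\l)=\C(\La_0,\La_1)\cup\C(\dd\La_0,\dd\La_1;\Ga)$ changes non-generically along the path, and the wrapped completions $(\La_0^0)^H$ and $(\La_0^1)^H$ do not even agree at $\t=\infty$. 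Your continuation map $\Phi$, defined by counting rigid strips on the trace cobordism, cannot be set up naively in this situation; your assertion that disjointness prevents chords from being created or destroyed is only true on the compact part and sidesteps the actual difficulty. Likewise "the action of chords is monotone under the trace" is not an argument the hypotheses support for the exterior chords.

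The paper's proof (\cref{InvrtEnroul}) resolves this with two devices you would need to supply. First, the wrapped complex is presented as a colimit over \emph{partial} wrappings $H_k$ (with $H_k''=0$ for $\t\geq k+1$), the inclusions $LC(\La^k)\subset LC(\La^{k+1})$ being furnished by \cref{PrincMaxEnrou}; this is what makes well-definedness and independence of $H$ precise. Second, for a moving boundary, the finite Legendrian isotopy $\dd\La_0^u\subset(\Ga,\l_\Ga)$ is converted into an exact Lagrangian cobordism in $[0,1]_\t\times\Ga$ and \emph{absorbed into the wrapping}: one builds interpolating Legendrians $\hat\La_T$ agreeing with $(\La^0)^H$ on $\{\t\leq T\}$ and with $(\La^1)^H$ on $\{\t\geq T+1\}$, observes via \cref{PrincDuMaxCobord} that every chord of action $\leq A$ lies in $\{\t\leq T\}$ for $T$ large, obtains an equality of action-truncated complexes $\W LC^A(\La^0)=\W LC^A(\hat\La_T)$, and passes to the colimit $A\to\infty$. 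Without this truncate-and-absorb mechanism (or an equivalent one handling the degenerate chord families at infinity), your third paragraph does not yield a well-defined chain map, so as written the proposal has a genuine gap at the heart of the invariance statement.
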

We emphasise that, since $\La_0$ can now pass below $\La_1$ during the isotopy, a $\RR_+$-family of Reeb chords can appear in the non-compact part (for some $u \neq 0,1$).

\paragraph{Exact triangle.} Similarly to the case of Lagrangian Floer theory, the cylindrical complex $LC(\La_0, \La_1; V, \l)$ can be seen as a sub-complex of the wrapped complex. Hence we get an exact triangle 
\[ \ra LH(\La_0, \La_1; V, \l) \lra \W LH (\La_0, \La_1; V, \l) \lra LH^\ext (\La_0, \La_1; V, \l) \ra\]
where $LH^\ext (\La_0, \La_1; V, \l)$ is the homology of the quotient, generated by the chords that we created by wrapping. We expect the following result to holds, generalising \cite{Ekh09_Q-SFT_lin-CH_lagr-FH} to the sutured setting

\begin{MainConj} \label{MConj.Invariance_triangle} 
The homology $LH^\ext (\La_0, \La_1; V, \l)$ is isomorphic to the (shifted) bilinearised homology $LH_{\e_0, \e_1}(\dd \La_0, \dd \La_1; \Ga, \l_\Ga)[1]$, with $H_1(\La)$ coefficients induced by $\dd\La \subset \La$, and where the augmentations are induced by $\La_0, \La_1$.

Moreover, given an isotopy of disjoint sutured Legendrians $(\La_0^u, \La_1^u)_{u \in [0,1]}$, such that the boundaries stay in the level $\{t=0\}$, 
    we get a commutative diagram
\[ \begin{xymatrix} {
     LH(\La_0^0, \La_1^0; V) \ar[r] \ar[d]^\wr & \W LH(\La^0_0, \La_1^0; V) \ar[r]^-{[-1]} \ar[d]^\wr & LH_\e(\dd \La_0, \dd \La_1; \Ga) \ar[r]^-{\d^0} \ar[d]^\wr & \\
      LH(\La_0^1, \La_0^1; V) \ar[r] & \W LH(\La_0^1, \La_1^1; V) \ar[r]^-{[-1]} & LH_\e(\dd \La_0^1, \dd \La_1^1; \Ga) \ar[r]^-{\d^1}  & }
    \end{xymatrix} \]    
    where the first two vertical maps are the isomorphisms of the previous theorems, and the third map is the isomorphism coming from the cobordism induced by the isotopy $\dd \La_0^u \cup \dd \La_1^u \subset (\Ga, \l_\Ga)$. In particular, if the boundary is fixed during the isotopy, this map is induced by the trivial cobordism. Thus, if the augmentations vanish, it is the identity. 
\end{MainConj}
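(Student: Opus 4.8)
The plan is to deduce both assertions from a single neck-stretching degeneration along the suture $\Ga$, which sits at infinity in $(V,\l)$ as a codimension-$2$ contact submanifold; this is the sutured incarnation of Ekholm's comparison in \cite{Ekh09_Q-SFT_lin-CH_lagr-FH}. The chain-level bijection is already in hand: the generators of the quotient complex computing $LH^\ext(\La_0,\La_1;V,\l)$ are exactly the chords created by the wrapping, and by construction these are canonically identified with the mixed Reeb chords $\C(\dd\La_0,\dd\La_1;\Ga,\l_\Ga)$ of the boundary Legendrians inside the suture. So the two complexes share the same underlying graded module up to the shift $[1]$, and the real content is to match the differentials. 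As a preliminary I would show that each $\La_i$, being a half-infinite cylinder over $\dd\La_i$ near infinity, defines an augmentation $\e_i$ of the Chekanov-Eliashberg dga of $\dd\La_i$ by counting the rigid disks in the symplectisation of $V$ with boundary on $\RR\times\La_i$ and a single positive puncture at a chord of $\dd\La_i$; that these counts satisfy the augmentation equation is the standard boundary-breaking argument, transported to the non-compact setting with the maximum principles already invoked for \autoref{MThm.Cylindrical_homology} and \autoref{MThm.Sutured_homology}.

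Next I would analyse the differential on $LH^\ext$ by stretching the neck on a hypersurface enclosing $\Ga$. A rigid strip in the symplectisation of $V$, asymptotic to two wrapping chords and with boundary on $\RR\times(\La_0^\W\cup\La_1)$, is forced near infinity into a neighbourhood of $\RR\times\Ga$, and under the degeneration it breaks into a main strip in the symplectisation of $\Ga$ with boundary on $\RR\times(\dd\La_0\cup\dd\La_1)$ carrying extra positive punctures, together with disks on $\La_0$ and $\La_1$ capping off those punctures. The main strip contributes a term of the pure differential of the mixed complex of the pair $(\dd\La_0,\dd\La_1)$, while each cap is weighted by $\e_0$ or $\e_1$; summing over all breakings reproduces precisely the bilinearised differential $\dd_{\e_0,\e_1}$, so that the identity on generators is a chain isomorphism onto $LC_{\e_0,\e_1}(\dd\La_0,\dd\La_1;\Ga,\l_\Ga)$. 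The shift $[1]$ is pinned down by a standard Conley-Zehnder index comparison between a wrapping chord in $V$ and its image chord in $\Ga$, the quadratic wrapping function in the collar direction normal to $\Ga$ accounting for the single extra unit of grading. The $H_1(\La)$-coefficients are handled by recording the relative boundary class of each disk, the inclusion $\dd\La\subset\La$ turning the group-ring weights on the $\Ga$-side into the asserted local system.

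For the second statement I would run the same degeneration in a one-parameter family. An isotopy $(\La_0^u,\La_1^u)$ with boundaries confined to $\{t=0\}$ induces, on one side, the continuation isomorphisms of \autoref{MThm.Cylindrical_homology} and \autoref{MThm.Sutured_homology} (the first two vertical arrows), and on the other side a Lagrangian cobordism $\dd\La_0^u\cup\dd\La_1^u$ in the symplectisation of $\Ga$, whose induced map is the third vertical arrow. Stretching the neck at the cobordism level decomposes each continuation curve into a cobordism strip over $\Ga$ plus augmentation caps, exactly as in the stationary case, yielding the commutativity of both squares. When the boundary is fixed throughout, the cobordism over $\Ga$ is a product, so its map is the continuation map of the identity; if in addition $\e_0=\e_1=0$ there are no augmentation corrections and this map is the identity on the nose, which gives the final assertion.

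The main obstacle is the analytic input needed to make the neck-stretching rigorous in this doubly non-compact situation: both $V$ and the Legendrians are non-compact, and the wrapping chords already form a non-compact set (as emphasised after \autoref{MThm.Sutured_homology}), so one must first confine the broken configurations to a compact region before appealing to Gromov compactness \cite{Gro85_Hol_curves}. Concretely, I expect the hard part to be proving that no component escapes along the cylindrical ends and that the only breaking along $\Ga$ is the expected ``main strip plus caps'' configuration — ruling out, for instance, multiply-covered or boundary-degenerate pieces that would corrupt the count — together with the transversality required to identify the glued moduli spaces with those defining $\dd_{\e_0,\e_1}$. This is exactly the additional technology the introduction defers to subsequent work, and is why the statement is phrased as a conjecture rather than a theorem.
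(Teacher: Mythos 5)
This statement is a \emph{conjecture} in the paper: there is no proof of it in the general form you are attempting. The author explicitly defers it, proves it only for the specific pair of conormals of local 2-braids in \cref{sec.Invariants_2-braids} (where homotopical restrictions force all the relevant differentials to vanish, so the identification with the bilinearised complex and the invariance of the triangle become essentially trivial), and states that a general proof would require adapting the machinery of \cite{EkOa15_Sympl_Cont_dga} to the sutured setting. So your proposal cannot be compared against a proof in the paper; it can only be assessed as a strategy, and you yourself correctly flag in your last paragraph that the analytic core is missing.

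As a strategy, your outline is the natural one (it is the sutured analogue of \cite{Ekh09_Q-SFT_lin-CH_lagr-FH}), and the chain-level identification of generators, the shape of the expected degeneration (main strip over $\Ga$ with extra positive punctures capped by augmentation disks), and the role of the $H_1(\La)$ local system are all consistent with what the paper asserts. But two of your steps run directly into the obstruction the paper identifies in \cref{ssec.Exact_seq}. First, your definition of the augmentations $\e_i$ by counting rigid disks in the symplectisation of $V$ with a positive puncture at a chord of $\dd\La_i$ requires controlling curves asymptotic to chords living at $\t=+\ity$; every maximum principle established in \cref{sec.Max_princ} confines curves to bounded $\t$, and no compactness statement in the paper covers such moduli spaces. (The paper instead produces the augmentation on $\LC(\dd\La;\Ga,\l_\Ga)$ indirectly, via the dga inclusion of \cref{MThm.Wrapped_dga}, which is proved with a holomorphic foliation rather than with disks escaping to $\t$-infinity.) Second, your neck-stretch along $\Ga$ produces limit levels in the symplectisation of $(\Ga,\l_\Ga)$ — precisely the curves the paper says it ``did its best to avoid,'' and the reason it proposes replacing the SFT model of $LC(\dd\La_0,\dd\La_1;\Ga)$ by the Ekholm--Oancea Floer-theoretic one, in which all curves stay $\t$-bounded. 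So the gap is not merely ``transversality and compactness details'': the specific degeneration you propose is the one the paper argues is not accessible with the tools it develops, and an actual proof would either have to build the SFT compactness for this doubly non-compact stretching from scratch or take the alternative route through \cite{EkOa15_Sympl_Cont_dga}. Your attempt should therefore be regarded as a correct heuristic for why the conjecture is expected to hold, not as a proof, and not as a reconstruction of an argument the paper contains.
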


From that point of view, we can think of a sutured Legendrian $\La$ as generalising an (immersed) filling of $(\dd \La, \Ga)$, and the fact that it induces an augmentation will be a particular case of the next result. Moreover we will prove this conjecture for our main example, however this relies heavily on topological restriction specific to the case at hand. For the general case, one would need to adapt the tools developed in \cite{EkOa15_Sympl_Cont_dga} to the sutured setting.

\paragraph{Wrapping the Chekanov-Eliashberg dga.} For a single sutured Legendrian $\La \subset (V, \l)$, we can also define a wrapped theory by adopting a different strategy. By modifying the contact form into $\tilde \l$, we create two families of Reeb chords, in cancelling positions, and once again each family is in bijection with the chords of the boundary $\C(\dd \La, \Ga, \l_\Ga)$. Picking an appropriate almost complex structure, we construct a holomorphic foliation in the symplectisation (near those families), which implies that one of those families form a sub-dga. 

\begin{MainThm}\label{MThm.Wrapped_dga}
Consider a sutured Legendrian $\La \subset (V, \l)$, and assume that $(V, \l)$ is S-sutured and $(\Ga, \l_\Ga)$ is hypertight. Then there is an inclusion of dga
\[\LC(\dd \La; \Ga, \l_\Ga) \ra \LC(\La; V, \tilde \l),\] 
where the homology of the boundary has $H_1(\La)$ coefficients, induced by $\dd \La \subset \La$.
In particular, an augmentation on $\LC(\La;V, \l)$ induces an augmentation on the dga of the boundary.
Moreover, an isotopy $\La^u$ of sutured Legendrians fixed on the boundary yields a commutative diagram
\[\begin{xymatrix}{
    \L C(\dd \La, \Ga, \l_\Ga) \ar[r] \ar[d]_{\Id} & \L C(\La^0, V, \tilde \l) \ar[d]^\wr\\
    \L C(\dd \La, \Ga, \l_\Ga)\ar[r] & \L C(\La^1, V, \tilde \l). }
\end{xymatrix}\]
\end{MainThm}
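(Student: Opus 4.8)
The plan is to realise the two families of chords geometrically, build an adapted almost complex structure supporting a holomorphic foliation near them, and then invoke positivity of intersection to cut the differential so that one family closes up into a sub-dga isomorphic to $\LC(\dd \La; \Ga, \l_\Ga)$. First I would make the modification $\l \lt \tilde\l$ explicit. Using that $(V,\l)$ is S-sutured, near the suture $V$ is modelled on $\RR_t \times W$ with $\l = dt + \b$, and $\La$ is the cylindrical lift of a Lagrangian ending on $\dd \La = \La_\Ga \subset \Ga$. I would bend the form in a controlled collar of the suture so that the Reeb flow of $\tilde\l$ detects each boundary chord $c \in \C(\dd \La, \Ga, \l_\Ga)$ twice, producing a cancelling pair $c^+, c^-$ of chords of $\La$ in $(V, \tilde\l)$; by keeping the modification inside this collar and controlling the action window, these are the only new chords and the interior chords are untouched. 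Hypertightness of $(\Ga, \l_\Ga)$ guarantees no Reeb orbits intervene, so $\LC(\dd \La; \Ga, \l_\Ga)$ is genuinely defined, and the maximum principles quoted earlier ensure Gromov compactness for the relevant disks.

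Next I would choose a cylindrical $J$ adapted to the product splitting near the suture and construct, in the symplectisation $\RR_a \times V$, a foliation by $J$-holomorphic curves through the cylinders over the two families. Because $\La$ is cylindrical and $J$ respects the $\RR_t \times W$ structure, the curves of the form $\RR_a \times \RR_t \times \{\text{chord of } \dd \La\}$ are $J$-holomorphic and foliate a neighbourhood of $\RR \times \La$ near the suture. The key step is then a positivity-of-intersection argument: any rigid disk contributing to the differential of a chord $c^+$ meets the leaves of this foliation with non-negative intersection number, which must in fact vanish, forcing all of its negative punctures to lie in the same family and forbidding it from reaching either the interior chords or the opposite family. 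This is precisely the statement that the family $\{c^+\}$ generates a sub-dga of $\LC(\La; V, \tilde\l)$.

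I would then identify this sub-dga with $\LC(\dd \La; \Ga, \l_\Ga)$. Using the foliation, the disks counted in the sub-dga differential project to holomorphic disks in the symplectisation of $\Ga$ with boundary on $\RR \times \dd \La$, and conversely each such disk in the suture lifts uniquely; a sign- and index-preserving bijection of moduli spaces yields the isomorphism of differentials, the $H_1(\La)$ coefficient of a disk being the class of its boundary under $\dd \La \hra \La$. The augmentation statement is then formal: precomposing an augmentation of $\LC(\La; V, \tilde\l)$ with the inclusion gives one on the boundary dga, while invariance of the cylindrical theory (\cref{MThm.Cylindrical_homology}) transports an augmentation of $\LC(\La; V, \l)$ to $\LC(\La; V, \tilde\l)$. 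Finally, for an isotopy $\La^u$ fixed on the boundary, the collar data near the suture, and hence the foliation, are unchanged, so the continuation (cobordism) dga isomorphism on $\LC(\La; V, \tilde\l)$ restricts to the identity on the sub-dga, giving the commuting square.

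I expect the main obstacle to be the foliation-plus-positivity argument in the non-compact sutured setting: verifying that the leaves are genuinely $J$-holomorphic and regular, that the contributing disks meet them with the correct sign, and that all of this is compatible with the SFT/Gromov compactness furnished by the maximum principles — in particular ruling out breaking or escape to infinity that would either spoil the sub-dga property or break the bijection of moduli spaces with those defining $\LC(\dd \La; \Ga, \l_\Ga)$.
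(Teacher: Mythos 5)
Your overall strategy matches the paper's: perturb the contact form near the suture to create two families of chords in bijection with $\C(\dd\La;\Ga,\l_\Ga)$, build a holomorphic foliation, and use positivity of intersection to show that one family closes up into a sub-dga identified with the boundary dga. However, the central technical step — the foliation — is wrong as you describe it, and this is a genuine gap. You propose a ``foliation by $J$-holomorphic curves'' of the form $\RR_a\times\RR_t\times\{\text{chord of }\dd\La\}$. First, such a set is $3$-dimensional, not a curve; even reading it as the trivial strips over the created chords, these sweep out only a measure-zero subset and do not foliate a neighbourhood of $\RR\times\La$, so there is nothing for a disk to ``cross''. Second, positivity of intersection between two holomorphic \emph{curves} yields a well-defined non-negative count only when the ambient symplectisation is $4$-dimensional, i.e.\ $\dim V=3$; the theorem is stated in all dimensions. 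The paper instead constructs a foliation by \emph{codimension-$2$} holomorphic submanifolds: lifts of the pages $R_\pm$ to graphs $\{s=\bar s(x)\}$ in $\RR_s\times\RR_t\times R_\pm$, glued to lifts of $(\text{gradient line of }g)\times\Ga$ over the perturbed collar. Intersection of a holomorphic curve with a codimension-$2$ holomorphic leaf is a non-negative count in any dimension, and the leaf structure (leaves accumulating on the one through the saddle point, with $s\to\pm\ity$ controlled) is what forces a curve positively asymptotic to $\check c$ to lie entirely in the single leaf $\RR_s\times\{\text{saddle}\}\times\Ga$, which is a copy of the symplectisation of $\Ga$.

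Relatedly, you never use the S-sutured hypothesis, which is in the statement precisely because it is what makes the leaves over $R_\pm$ exist: the lift $x\mapsto(\bar s(x),t_0,x)$ is $J$-holomorphic iff $d\bar s=\b_\pm\circ J_\pm$, i.e.\ iff $\b_\pm\circ J_\pm$ is exact — the Stein condition. Without it your ``$J$ respects the product structure'' gives no holomorphic hypersurfaces and the positivity argument cannot start. The remaining parts of your proposal (identification of the sub-dga with $\LC(\dd\La;\Ga,\l_\Ga)$ by viewing curves in the distinguished leaf as curves in the symplectisation of $\Ga$, the formal augmentation statement, and the naturality under boundary-fixing isotopies) do agree with the paper, and your remark that one must transport augmentations from $\l$ to $\tilde\l$ via invariance is a point the paper leaves implicit. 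You should also record the grading computation ($|\check c|_V=|c|_\Ga$ for the saddle family, with a shift for the maximum family and for the created orbits — the latter being why hypertightness of $(\Ga,\l_\Ga)$ is essential rather than cosmetic), and specify that it is the saddle family, not the maximum family, that forms the sub-dga.
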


The wrapped Chekanov-Eliashberg dga is obtained via quotienting by the generators of this sub-dga (when the boundary $\dd \La \subset (\Ga, \l_\Ga)$ is hypertight). Although no actual wrapping happens in that construction, this quotient corresponds, at least morally, to adding chords at infinity which cancel out with the generators of the sub-dga.

\subsubsection*{Invariant of local 2-braids}

As mentioned previously, the unit conormal of a submanifold $N \subset M$, with boundary $\dd N \subset \dd M$, is a rich source of sutured Legendrians. For example, the conormal of a braid $B \subset [-1,1] \times \Si$, where $\Si$ is a fixed surface, is a collection of cylinders, whose boundaries are fibers in $\Ga = ST^*(\Si \sqcup \Si)$.
For very simple braids (namely, the braid has only 2 strands and projects to a small disk in $\Si$), the previous exact triangle is a complete invariant :
\begin{MainThm}\label{MThm.Invariant_complete_2-braids}
If $B, B' \subset [-1,1] \times \Si$ are two local 2-braids, such that $\La_B$ and $\La_{B'}$ are isotopic as sutured Legendrians with fixed boundary, then the braids are equivalent :
\[ \La_B \sim \La_{B'} \text{ rel } \dd \Ra B \sim B'.\]
\end{MainThm}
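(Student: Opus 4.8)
The plan is to reduce the statement to a single isotopy invariant and to show that this invariant separates local $2$-braids. Since the braid group on two strands is $B_2 \cong \ZZ$, generated by the half-twist $\sigma_1$, a local $2$-braid is determined up to equivalence by its exponent $n \in \ZZ$ — the signed number of crossings, equivalently the relative winding of the two strands inside the small disk. So it suffices to build an invariant of $(\La_B, \dd \La_B)$, unchanged under sutured Legendrian isotopy fixing the boundary, whose value determines $n$, and then to check that distinct $n$ yield distinct values.

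First I would fix the geometry. Realise $B$ inside $[-1,1]_t \times D^2 \subset [-1,1] \times \Si$ with the product metric, so that the two strands are graphs $t \mapsto (t, p_i(t))$ and the relative position $w(t) = p_1(t) - p_0(t)$ turns by total angle $n\pi$. Writing $\La_0, \La_1$ for the unit conormals of the two strands, so that $\La_B = \La_0 \cup \La_1$, I would identify the Reeb chords (of $\La_B$, and of the wrapped configuration) with the binormal geodesic chords of the braid, i.e.\ straight segments meeting each strand orthogonally at both feet. These split into chords internal to one strand and chords running from $\La_0$ to $\La_1$; only the latter feel the braiding. The core computation is that, after a generic perturbation making all chords nondegenerate, the $\La_0$-to-$\La_1$ chords are organised by winding level, and that their Conley--Zehnder gradings — assembled from the Morse indices of the underlying geodesics together with the usual orthogonality corrections — grow monotonically with the winding. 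In particular the graded group $\W LH(\La_0, \La_1; V, \l)$, or equivalently its Poincar\'e polynomial, is a strictly monotone, hence injective, function of $n$.

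With this the argument closes quickly. By \cref{MThm.Sutured_homology}, the wrapped sutured homology $\W LH(\La_0, \La_1)$ is invariant under a sutured Legendrian isotopy keeping the boundary in $\Ga = ST^*(\Si \sqcup \Si)$ fixed; the exact triangle of \cref{MConj.Invariance_triangle}, which we may invoke here since the conjecture is proved for this example, moreover shows that the external term $LH^\ext \cong LH_\e(\dd\La_0, \dd\La_1; \Ga)[1]$ is the \emph{same} for all $B$ (the boundary fibres do not move), so that all the braid-dependence is carried by $\W LH$ itself. Hence if $\La_B \sim \La_{B'}$ rel $\dd$ then $\W LH(\La_B) \cong \W LH(\La_{B'})$ as graded groups, and injectivity of the Poincar\'e polynomial in $n$ forces $n = n'$, i.e.\ $B \sim B'$. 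The sign of $n$ (distinguishing $\sigma_1$ from $\sigma_1^{-1}$) is read off from the grading and orientation data of the chords; consistently, the evident mirror symmetry relating $\sigma_1^n$ to $\sigma_1^{-n}$ is orientation-reversing on the disk and permutes the two boundary components, hence is \emph{not} an isotopy rel $\dd$ and cannot wash out the sign.

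The main obstacle I anticipate is precisely the grading computation of the second step. Existence and Gromov compactness for the relevant binormal chords and strips are guaranteed by the maximum principles underlying \cref{MThm.Cylindrical_homology} and \cref{MThm.Sutured_homology}, but the index bookkeeping in the non-compact sutured symplectisation is delicate, and one must rule out that births and deaths permitted by a general Legendrian isotopy could make the graded homologies of $\La_B$ and $\La_{B'}$ agree for $n \neq n'$. Establishing the clean identification of $\W LH(\La_0, \La_1)$ with a winding-graded Morse-type complex — the relative, sutured analogue of the Abbondandolo--Schwarz and Viterbo picture for conormals — is where essentially all the work lies; the classification $B_2 \cong \ZZ$ and the appeal to the invariance theorems are routine by comparison.
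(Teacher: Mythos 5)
Your key computational claim---that the Conley--Zehnder gradings of the $\La_0$-to-$\La_1$ chords grow with the winding, so that the Poincar\'e polynomial of $\W LH(\La_0,\La_1)$ is an injective function of $n$---is false, and this is fatal to the approach. In the paper's local model the pair of conormals is transported to $J^1(\RR\times S^1)$, where the twisting strand becomes the $1$-jet of $h_1(a,q)=G(a)\,\e_1\cos(\th+\eta_k(a))$ and the constant strand the zero section; the mixed chords are the positive critical points of $h=h_2-h_1$, and for \emph{every} $k$ there are exactly three of them per homotopy class $\g$, namely $c^0_\g$ and $c^\pm_\g$, with degrees independent of $k$. The wrapped complex is $C[1]\oplus C\oplus C[1]$ for all $k$, and even with the differential $\dd c^-_\g=c^0_\g$, $\dd c^+_\g=\mu_x^{-k}c^0_\g\mu_y^{k}$ the resulting homologies are isomorphic as abstract graded $\ZZ[\mu_x]$-$\ZZ[\mu_y]$-bimodules for all $k$ (conjugate by $\mu_x^{k}$ and $\mu_y^{-k}$). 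The winding number lives only in the $H_1(\La)$-coefficients of one component of the differential, not in any grading or in the isomorphism type of the homology; the "births and deaths" you worry about at the end do in fact wash out everything your proposed invariant sees.

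What actually pins down $k$ in the paper is the \emph{naturality} of the exact triangle, not the wrapped homology by itself: for an isotopy fixed at the boundary, the induced map on the exterior term $LH^\ext$ is the \emph{identity} on the generators coming from boundary chords (the trivial cobordism on $\dd\La$), not merely some unspecified isomorphism. Comparing the triangles for $B_{t^{0}}$ and $B_{t^{k}}$ then yields a commutative square involving $\Id\oplus\d_0$ and $\Id\oplus\d_k$ with $\d_k(c_\g)=\mu_x^{-k}c_\g\mu_y^{k}$, and the kernels of these maps can only correspond when $k=0$. So the invariant you need is the connecting homomorphism read against a canonical basis of the exterior homology, together with the rigidity of the third vertical map in the diagram of \cref{MConj.Invariance_triangle}; your proposal never extracts this, and without it no graded-group or bimodule invariant of $\W LH$ alone can separate local $2$-braids.
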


Note that, if one allows more general isotopies, the unit conormals become equivalents :
\begin{itemize}
    \item as smooth manifolds with fixed boundary, two strands can cross each other, so  we have $\La_B \over{C^\ity}\sim \La_{B'} \text{ rel } \dd$ ;
    \item as sutured Legendrians with moving boundary, we can unbraid everything so the conormals are again isotopic.
\end{itemize}

To prove this result, we compute the exact sequence of the pair of sutured Legendrians, and use homotopical restrictions to show that this is invariant in the sense of \cref{MConj.Invariance_triangle}. The computation relies on the fact that, because our braids are local, we can compute holomorphic curves in the neighbourhood of a constant strand, which is contactomorphic to $J^1(\RR \times S^1)$. In that 1-jet space, the sutured Legendrians become conical at infinity, and wrapping corresponds to a modification creating critical points, which is reminiscent of the construction in \cite{PaRu20_Aug_&_Immers_lagr_fill}.

\subsection*{Relation to other works}

\paragraph{Lagrangian Floer homology.}
The sutured setting is a straight forward generalisation of Floer theory. Indeed, any exact immersed Lagrangian in a Liouville domain $(W, \b)$ lifts to a sutured Legendrian, and intersection points lifts to Reeb chords. Moreover, by \cite{Diri16_Lift} the holomorphic curves in $W$ and in the symplectisation of $V = \RR \times W$ are also in bijection, so the (wrapped) sutured Legendrian homology of a pair of hypertight Legendrians $\La_0, \La_1 \subset V$ is isomorphic (on the chain level) to the (wrapped) positive Lagrangian Floer homology of their Lagrangians projections $L_0, L_1 \subset W$ :
\[ LC(\La_0, \La_1; V) \simeq FC^+(L_0, L_1; W) \qquad \W LC(\La_0, \La_1; V) \simeq \W FC^+(L_0, L_1; W). \]
Note that an holomorphic curve in $W$ going from a intersection point of positive action, to one of negative action, lifts to a banana curve in the symplectisation (ie a curves with two positive asymptotics). Thus to recover the full Lagrangian Floer homology from the sutured setting, one should consider a Rabinowitz-type complex, generated by chords going from $\La_0$ to $\La_1$ as well as from $\La_1$ to $\La_0$, and where the differential counts those bananas.

\paragraph{Conical Legendrians.} In \cite{PaRu20_Aug_&_Immers_lagr_fill}, sutured Legendrians in the contactisation of a standard ball $(D^{2n}, \b)$, where the Liouville vector field is outgoing, are studied using  the contactomorphism $(\RR \times D^{2n}, dt+\b) \simeq (\RR \times T^*\RR^n, dt - p\cdot dq)$. This turns a sutured Legendrian into the 1-jet of a conical function (or, more generally, a mutisection). By adding critical points in cancelling position, and studying Morse flow trees, they obtain the map from \cref{MThm.Wrapped_dga}.

\paragraph{The unit conormal of a knot.} As stated previously, our result for braids is a relative version of an analogous theorem for knots, proved in \cite{She16_Conormal_torus_knot_inrt}\cite{ENS16_Complete_knot_invrt} : given two knots $K, K' \subset \RR^3$, if $\La_K \simeq \La_{K'}$ as Legendrians in $ST^*\RR^3$, then $K \simeq K'$ (up to mirror). The strategy of those papers is to use Legendrian invariants to recover a classical complete invariant (namely, the group knot with its peripheral subgroup). We also mention another point of view due to the author \cite{Dat22_Conormal_stops_Hyperbolic_knots}, based on an idea of \cite{CGHH10_Sutures}. For $K \subset M$ an hyperbolic knot, consider the sutured contact manifold obtained by removing a standard neighbourhood of $\La_K \subset ST^*M$, and applying the convex-sutured construction. Then the (sutured) Legendrian homology of a unit fiber, with its product structure, recover the knot group $\pi_1(M \sms K)$.

\paragraph{Sutured manifolds and stops.} If we restrict to balanced sutured manifolds, there is a correspondence with the stopped perspective introduced in \cite{Syl16_Stops}\cite{GPS17_Fuk_sectors}\cite{AsEk21_Chekanov-Eliashberg_dga_singular}. More precisely, one can go from one of the following object to the other
\begin{itemize}
    \item a balanced sutured contact manifold, whose horizontal boundaries are both $(W, \b)$ ;
    \item a contact manifold with (smooth) convex boundary, whose positive and negative regions are both $(W, \b)$, and with dividing set contactomorphic to the suture ;
    \item a closed contact manifold containing a stop $W$, ie a submanifold of codimension $1$ which is a Liouville domain.
\end{itemize}
The equivalence between the first two objects is due to \cite{CGHH10_Sutures}. For a more detailed discussion regarding the relation with the stopped point of view, we refer to \cite[§2.4]{Dat22_Conormal_stops_Hyperbolic_knots}.

\subsection*{Organisation}

In the first section we recall some usual definitions of symplectic and contact geometry. In \cref{sec.Sutured_Leg_cvx_hsurf}, we recall the sutured framework introduced by \cite{CGHH10_Sutures}, which we adapt to the Legendrian case in \cref{ssec.Sut_leg}. We present the relation with the convex point of view in \cref{ssec.cvx_hsurf} and \cref{ssec.cvx_2_sut}, and in \cref{ssec.ExVarSut} we construct some examples of sutured Legendrians. In \cref{sec.Max_princ}, we describe the almost complex structures that we will be used, as well as the relevant holomorphic curves, and prove the necessary maximum principles. Then in \cref{sec.Sutured_homologies}, we define our sutured Legendrian invariants. The cylindrical and wrapped complexes are described in \cref{ssec.DfnInvariants}, and their invariance is proved in \cref{ssec.Proof_invariance}. The sutured exact sequence is the object of \cref{ssec.Exact_seq}, and the wrapping for the Chekanov-Eliashberg dga is presented in \cref{ssec.dga_wrapped}. Finally in \cref{sec.Invariants_2-braids} we prove our result about braids, starting by recalling standard vocabulary. We then show in \cref{ssec.Braid_1-jet} that we can compute the sutured exact sequence in a 1-jet space, so we can use Morse theory to explicit the differential. The proof of the theorem is finalised in \cref{ssec.proof}, and in \cref{ssec.Sketch_general_case} we sketch the general case.

\paragraph{Acknowledgement :} This paper was part of a PhD thesis at the University of Nantes,
and the author would like to thank his advisor Vincent Colin who introduced him to this
subject, suggested several fascinating projects and was always ready for numerous questions.
 The author is currently supported by the Wallenberg foundation through the
grant KAW 2019.0507, and is grateful for their help.

\tableofcontents 
~\\~\\

\section{Contact manifolds and Liouville cobordisms}

We recall some classical definitions from contact and symplectic geometry. All objects are assumed smooths.

\subsection{Symplectic and contact manifolds}
A {\it symplectic manifold} $(W,\omega)$ is an even-dimensional manifold $W^{2n}$, oriented, with a closed 2-form $\omega$ such that $\omega^n > 0$. In other words $\w$ is non-degenerate : for any $v \in T_x W, (\i_v \w)_x \neq 0$. \\

A {\it contact manifold} $(V, \xi)$ an odd-dimensional manifold $V^{2n+1}$, oriented, endowed with a {\it contact structure} $\xi$, ie a cooriented hyperplane field maximally non-integrable. In other words there exists a 1-form $\lambda$ such that $\ker \lambda = \xi$, and satisfying the contact condition $\lambda \wedge d\lambda^n > 0$. Such a form will be called a contact form.
Note that if $\lambda_0$ is a contact form for $\xi$, all other contact forms can be written $\lambda = f\lambda_0$, with $f : V \ra \RR^*$.

Its {\it Reeb vector field} $R_\lambda$ is defined by $i_{R_\lambda}d\lambda=0$ and $\lambda (R_\lambda)=1$. Note that if $f\lambda$ is another contact form, the Reeb vector fields are related by 
\[R_{f\lambda} = \frac{1}{f^2}\left(f R_{\lambda} + v\right)\] 
where $v \in \xi$ is such that $\i_v d\l_{\rest \xi} = df_{\rest \xi}$.

If $(V, \lambda)$ is contact, its {\it symplectisation} is 
\[\Symp(V) = (\RR \times V, \w = d(e^s\lambda)).\] 
We also define the positive symplectisation $\Symp_+ = \{s \geq 0\}$ and the negative symplectisation $\Symp_-=\{s \leq 0\}$. 
If $(W,\omega = d\beta)$ is an exact symplectic manifold, ie $\w = d\b$ for some $1$-form $\b$, its {\it contactisation} is 
\[\Cont(W) = (\RR_t \times W, \l = dt + \b).\] 
Similarly, the $S^1$-contactisation of $W$ is $\Cont_{S^1}(W) = (S^1_\theta \times W, d\theta + \b)$.

\begin{exs} Here are some classic examples of contact and symplectic manifolds, constructed from a smooth manifold $M$ :
\begin{itemize}
\item the cotangent bundle $(T^*M, \w = d(-p\cdot dq))$ is a symplectic manifold ;
\item  $(J^1(M) = \RR_z \times T^* M, \lambda = dz - p\cdot dq)$ is contact, and $R_\lambda = \dd_z$ (it can also be seen as the contactisation on $T^*M$) ;
\item  the unit cotangent bundle $ST^*M$ endowed with 
\[\xi_{(q, \a)} = \{(Q, A) \in T_{(q, \a)} ST^*M, \a(Q)=0\}\] 
is contact. If we pick a metric $g$ on $M$, there is contactomorphism between $ST^*M$ and $(U_g M, \lambda_g)$, where $(\l_g)_{(q,v)}(Q, *)=g(v,Q)$. The Reeb vector field is $R_{(q,v)} = (v, w)$, where $w$ is determined by the metric, and the Reeb flow lifts the geodesic flow of $g$.
\end{itemize}
\end{exs}

\begin{prop}[Gray's theorem] \label{prop.Moser} Let $(\xi_t)_{t\in [0,1]}$ be a path of contact structures on $V$. Them $\xi_0$ are $\xi_1$ are contactomorphic. More precisely, there exists a (time-dependent) vector field  $X_t$ such that $(\phi_X^t)^*(\xi_t)=\xi_0$.
\end{prop}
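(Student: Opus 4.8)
The plan is to prove Gray's theorem (\cref{prop.Moser}) by the Moser deformation method, constructing the desired contactomorphism as the time-one flow of a carefully chosen time-dependent vector field. First I would fix contact forms $\l_t$ with $\ker \l_t = \xi_t$, chosen to depend smoothly on $t$ (this is possible since the space of contact forms for a given cooriented structure is an $\RR^*$-torsor, and one can make a smooth selection). The goal is to find an isotopy $\phi^t$ such that $(\phi^t)^* \l_t = g_t \, \l_0$ for some positive smooth function $g_t$, since equality of contact forms up to a positive conformal factor is exactly what it means to pull back the contact structure $\xi_t$ to $\xi_0$.

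The key computation is to differentiate the desired relation $(\phi^t)^* \l_t = g_t \l_0$ in $t$. Writing $X_t$ for the generating vector field of the flow $\phi^t$, the left-hand side differentiates, via Cartan's formula $\L_{X_t} = d \i_{X_t} + \i_{X_t} d$, to
\[ (\phi^t)^* \left( \dot\l_t + \L_{X_t} \l_t \right) = (\phi^t)^*\left( \dot \l_t + d(\l_t(X_t)) + \i_{X_t} d\l_t \right), \]
where $\dot\l_t = \tfrac{d}{dt}\l_t$. The strategy is to seek $X_t \in \xi_t = \ker \l_t$, so that $\l_t(X_t) = 0$ and the $d(\l_t(X_t))$ term drops out. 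Then, pulling $(\phi^t)^*$ back out, the equation to solve becomes
\[ \dot\l_t + \i_{X_t} d\l_t = \mu_t \, \l_t \]
for some function $\mu_t$ (which will encode $\dot g_t / g_t$ after pullback). I would split this into its restriction to $\xi_t$ and its evaluation on the Reeb field $R_{\l_t}$. On $\xi_t$, since $d\l_t$ is nondegenerate there (the contact condition), the equation $\dot\l_t\rest{\xi_t} + (\i_{X_t} d\l_t)\rest{\xi_t} = 0$ determines $X_t \in \xi_t$ uniquely. Evaluating the full equation on $R_{\l_t}$ then fixes $\mu_t = \dot\l_t(R_{\l_t})$ (using $\i_{X_t} d\l_t (R_{\l_t}) = -d\l_t(R_{\l_t}, X_t) = 0$). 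This produces a well-defined smooth $X_t$.

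The main obstacle, and the point requiring genuine care, is \emph{integrability} of $X_t$: the flow $\phi^t$ must exist for all $t \in [0,1]$. On a closed manifold this is automatic, but $V$ here need not be compact, so one must either add a compactness or properness hypothesis, or verify that $X_t$ is complete (for instance, that it is compactly supported or has controlled growth at infinity). Assuming the flow exists, I would finish by setting $g_t = \exp\!\big(\int_0^t (\phi^s)^*\mu_s \, ds\big)$, which is positive, and checking that $\tfrac{d}{dt}\big((\phi^t)^*\l_t - g_t \l_0\big) = 0$ with the correct initial condition at $t=0$, whence $(\phi^t)^*\l_t = g_t\l_0$ for all $t$. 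At $t=1$ this gives $(\phi^1)^*\xi_1 = \xi_0$, so $\phi^1$ is the sought contactomorphism and $X = X_t$ is the asserted vector field. I would remark that the construction is canonical once the forms $\l_t$ are fixed, and that the contactomorphism is isotopic to the identity through the $\phi^t$.
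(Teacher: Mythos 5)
Your proposal is the standard Moser-trick proof of Gray stability and it is correct; the paper states \cref{prop.Moser} without proof as a classical fact, but the exact computation you carry out (seeking $X_t \in \xi_t$ solving $\dot\l_t + \i_{X_t} d\l_t = \mu_t \l_t$, splitting along $\xi_t$ and the Reeb direction) is precisely the one the author reproduces in step (ii) of the proof of \cref{lem.cvx->sut}. You are also right to flag completeness of the flow as the one real hypothesis: as stated the proposition omits compactness of $V$, and in the paper's actual uses the construction is either local or on a compact piece, so the flow exists for all $t$.
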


A {\it Legendrian} is a $n$-dimensional submanifold $\Lambda^n \subset (V^{2n+1},\xi)$, satisfying $T\Lambda \subset \xi$. In other words, if $\l$ is a contact form for $\xi$, $\l_{\rest T\La}=0$.
A {\it Lagrangian} is a  of $n$-dimensional submanifold $L^n \subset (W^{2n}, \w)$ and such that $\w_{\rest TL} = 0$.

\begin{prop}[Standards neighbourhood] \label{Vois_st_leg} Consider a point $p \in (V, \xi)$. There exists a neighbourhood of $p$ and coordinates $(z, x_i, y_i)$  such that $\xi = \ker(dz - x_i dy_i)$. \\
Let $\Lambda \subset (V, \xi)$ be a Legendrian. Then a neighbourhood of $\Lambda$ is contactomorphic to a neighbourhood of the zero section $0_\Lambda  = \{0\} \times \Lambda  \subset (J^1(\Lambda), \xi_\st)$. \\
Let $L \subset (W, \w)$ be a Lagrangian. Then a neighbourhood of $L$ is symplectomorphic to a neighbourhood of the zero section $0_L = L \times \{0\} \subset (T^*M, \w_\st)$.
\end{prop}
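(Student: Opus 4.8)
The plan is to obtain the point (Darboux) statement directly from a Reeb-flow construction, and then to derive the two neighbourhood statements by the Moser homotopy method (of which \cref{prop.Moser} is the prototype): in each case one builds a diffeomorphism from the linear/jet model onto a neighbourhood of the submanifold that identifies the relevant forms \emph{to first order along the submanifold}, and corrects it to an honest contacto- or symplecto-morphism by integrating a time-dependent vector field that vanishes on the submanifold.

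For the Darboux chart at $p$, fix a contact form $\l$ with $\ker\l=\xi$. The restriction $d\l_{\rest\xi_p}$ is a linear symplectic form on the $2n$-plane $\xi_p$, so I choose a local hypersurface $\Si$ through $p$ transverse to the Reeb field $R_\l$; then $(\Si, d\l_{\rest\Si})$ is symplectic, and ordinary symplectic Darboux supplies functions $x_i,y_i$ putting $d\l_{\rest\Si}$ into its constant-coefficient normal form, which after subtracting an exact one-form I may upgrade to $\l_{\rest\Si}=-\sum x_i\,dy_i$. Extending by the Reeb flow to define a coordinate $z$ with $R_\l=\dd_z$, both $\l$ and $dz-\sum x_i\,dy_i$ are $R_\l$-invariant (since $L_{R_\l}\l=0$) and agree on $\Si$, hence coincide throughout the chart, giving the normal form directly.

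For the Legendrian statement I first record the normal data. As $\l_{\rest T\La}=0$ we have $T\La\subset\xi$ and thus $d\l_{\rest T\La}=0$, so $T\La$ is isotropic in $(\xi, d\l)$ and, being of dimension $n=\tfrac12\dim\xi$, is Lagrangian there; hence $d\l$ identifies the symplectic normal bundle of $T\La$ inside $\xi$ with $T^*\La$, while the Reeb direction contributes the remaining $\RR_z$ factor. This yields a fibrewise-linear diffeomorphism $\Phi$ from a neighbourhood of $0_\La\subset J^1(\La)$ onto a neighbourhood of $\La$, carrying $0_\La$ to $\La$ and arranging $\Phi^*\l$ to agree with the model form $dz-x_i\,dy_i$ along $0_\La$ together with its first normal derivatives. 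Writing $\l_t$ for the linear homotopy from $\Phi^*\l$ to the model, the Moser scheme produces a generating vector field $X_t$; the defining equation is solvable precisely because both forms are contact and agree to first order along $0_\La$, which forces $X_t$ to vanish on $0_\La$, so its flow fixes the zero section and its time-one map composed with $\Phi$ is the sought contactomorphism.

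The Lagrangian statement is the exact symplectic analogue: $TL$ is Lagrangian in $(W,\w)$, so $\w$ identifies $TW/TL$ with $T^*L$, giving a diffeomorphism of a neighbourhood of $0_L\subset T^*L$ onto a neighbourhood of $L$ matching $\w$ with $\w_\st$ along $L$, after which the standard Weinstein--Moser interpolation and integration of a vector field vanishing on $L$ finishes the argument. I expect the main obstacle to be the Moser step in the contact case: because contact forms are defined only up to a positive conformal factor, the homotopy equation couples an unknown function with the vector field, and one must check that the first-order agreement of $\Phi^*\l$ with the model along $0_\La$ makes this coupled equation solvable with an $X_t$ that genuinely vanishes on the zero section (so that $\La$ is preserved), and that the flow exists for all $t\in[0,1]$ --- immediate locally or for compact $\La$, and requiring only mild control otherwise.
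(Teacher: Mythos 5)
The paper offers no proof of this proposition: it is recalled in the preliminaries as a classical fact (contact Darboux plus the Weinstein neighbourhood theorems for Legendrians and Lagrangians), so there is no argument of the paper's to measure yours against. Your sketch is the standard Moser-method proof and is correct in outline, with two points that deserve more care. First, in the Darboux step, ``subtracting an exact one-form'' is not an operation you may perform on $\l_{\rest \Si}$ itself without changing the contact form; the discrepancy $\l_{\rest \Si}-(-x_i\,dy_i)=df$ should instead be absorbed into the transverse coordinate, e.g.\ by initialising $z=f$ on $\Si$ before extending along the Reeb flow, after which your invariance argument (both forms $R_\l$-invariant and equal along $\Si$) does close up. Second, in the Legendrian case you neither need, nor can cheaply arrange by fibrewise-linear algebra, agreement of $\Phi^*\l$ with $dz-p\,dq$ to first order in the normal directions: pointwise agreement on all of $T_pV$ for $p\in 0_\La$ suffices, because in the Gray--Moser equation $\i_{X_t}(d\l_t)_{\rest \xi_t}=-(\dot\l_t)_{\rest \xi_t}$ the nondegeneracy of $d\l_t$ on $\xi_t$ forces $X_t=0$ wherever $\dot\l_t=0$, so the flow automatically fixes the zero section, and the conformal factor $\mu_t=\dot\l_t(R_{\l_t})$ causes no solvability issue. (By contrast, in the Lagrangian case first-order vanishing is genuinely the relevant condition: there you need the relative Poincar\'e lemma to produce a primitive $\s_t$ of $\w_1-\w_0$ vanishing along $L$.) With these adjustments your argument is complete and is the expected one.
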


\begin{prop}[Moser for Legendrians] \cite[Theorem 2.41]{Gei06_Contact_Geometry}  \label{prop.Moser_Leg}
Let  $\Lambda_t \subset (V, \xi_t)$ be a path of Legendrians. Then there exists a path of diffeomorphisms $\phi_t$ such that
\[\phi_t^* \xi_t = \xi_0 \text{  and  } \phi_t^*(\Lambda_t)=\Lambda_0 \ (\text{ie } \phi_t(\Lambda_0) = \Lambda_t).\]
\end{prop}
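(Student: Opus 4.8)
The plan is to combine Gray's theorem with a Legendrian isotopy extension argument, reducing first to the case of a fixed contact structure. First I would apply \cref{prop.Moser} to the path $\xi_t$, producing a path of diffeomorphisms $\psi_t$ with $\psi_0 = \Id$ and $\psi_t^* \xi_t = \xi_0$, that is $d\psi_t(\xi_0) = \xi_t$. Thus each $\psi_t$ is a contactomorphism $(V, \xi_0) \ra (V, \xi_t)$, and $\La_t' := \psi_t^{-1}(\La_t)$ is a smooth path of Legendrians for the \emph{fixed} contact structure $\xi_0$, with $\La_0' = \La_0$. This reduces the statement to realising the path $\La_t'$ by an ambient contact isotopy of $(V, \xi_0)$.

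Second, I would establish this Legendrian isotopy extension via the Moser trick with contact Hamiltonians: I claim there is a contact isotopy $\chi_t$ of $(V, \xi_0)$, with $\chi_0 = \Id$, such that $\chi_t(\La_0) = \La_t'$. Fix a contact form $\alpha_0$ for $\xi_0$ and let $Y_t$ denote the velocity field of the isotopy along $\La_t'$. Because $\La_t'$ stays Legendrian, the function $H_t := \alpha_0(Y_t)$, defined a priori only along $\La_t'$, already controls the normal motion: using the standard neighbourhood \cref{Vois_st_leg} to identify a neighbourhood of $\La_t'$ with the zero section in $J^1(\La)$, the velocity of a Legendrian isotopy is encoded by the $1$-jet of $H_t$, so extending $H_t$ to a smooth function on $V$ and taking the associated contact vector field $X_{H_t}$ yields a field whose restriction to $\La_t'$ agrees with $Y_t$ modulo directions tangent to $\La_t'$. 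Integrating $X_{H_t}$ gives the sought $\chi_t$.

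Finally, set $\phi_t := \psi_t \circ \chi_t$. Then $\phi_0 = \Id$, and $\phi_t^* \xi_t = \chi_t^* \psi_t^* \xi_t = \chi_t^* \xi_0 = \xi_0$ since $\chi_t$ preserves $\xi_0$; moreover $\phi_t(\La_0) = \psi_t(\chi_t(\La_0)) = \psi_t(\La_t') = \La_t$, which is the second conclusion. The main obstacle is the extension step: one must check that $X_{H_t}$ depends smoothly on $t$ and, more importantly, that its flow is defined for all $t \in [0,1]$. Completeness is automatic when $\La$ is compact, since one may cut off $H_t$ outside a compact neighbourhood of $\bigcup_t \La_t'$; for non-compact $\La$, as occurs for the sutured Legendrians of interest later, one instead needs to control the isotopy at infinity to guarantee a well-defined flow. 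In the compact setting this is exactly \cite[Theorem 2.41]{Gei06_Contact_Geometry}.
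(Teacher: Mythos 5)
The paper gives no proof of this proposition --- it is quoted directly from \cite[Theorem 2.41]{Gei06_Contact_Geometry} --- so there is no in-paper argument to compare against; your proof is the standard one for the cited result (Gray stability to freeze the contact structure, then Legendrian isotopy extension via contact Hamiltonians), the composition $\phi_t = \psi_t \circ \chi_t$ is checked correctly, and the argument is sound. The one step to phrase more carefully is the extension: the velocity field $Y_t$ prescribes not just the values of $H_t$ along $\La_t'$ but its full $1$-jet (in particular the derivatives in the directions of $\xi_0$ normal to $T\La_t'$), and it is the extension of this $1$-jet --- not of the function values alone --- that forces $X_{H_t}|_{\La_t'} = Y_t$ modulo $T\La_t'$; the fact that $\La_t'$ stays Legendrian is exactly the compatibility condition making this prescribed $1$-jet consistent. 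Your closing caveat about completeness of the flow for non-compact Legendrians is well taken, and is precisely why the paper's later sutured isotopies are required to be controlled (cylindrical and centered) at infinity.
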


\subsection{Liouville domains}
A {\it Liouville domain} $(W,\beta)$ is a manifold endowed with a 1-form such that :
\begin{itemize}
\item $W$ is a compact manifold (with boundary) ;
\item $(W, d\beta)$ is symplectic ;
\item the Liouville vector field $Y$, defined by $\iota_Y d\beta = \beta$, is positively transverse to the boundary.
\end{itemize}

A {\it Liouville cobordism} $(W,\beta)$ is a manifold endowed with a 1-form such that 
\begin{itemize}
\item $(W, d\beta)$ is symplectic. 
\item The boundary of $W$ splits into $\dd W = \dd_+ W \sqcup \dd_- W$, such that the Liouville vector field is positively (resp. negatively) transverse  to $\dd_+ W$ (resp. $\dd_- W$).
\end{itemize}
We will then say that $W$ is a Liouville cobordism from $(\dd_+ W, \beta_{ \rest\dd_+ W})$ to $(\dd_- W, \beta_{ \rest\dd_- W})$. In particular a Liouville domain is a cobordism from $(\dd W, \beta_{ \rest\dd W})$ to the emptyset. 

\begin{prop} \label{prop.Vois_bord_Liou} Let $(W,\beta)$ be a Liouville cobordism. Then the boundary $(\dd_+ W, \beta_{ \rest\dd_+ W})$ is contact, and there exists a neighbourhood 
\[(\N(\dd_+ W), \b) \simeq \big((-\e, 0]_r \times \dd W, e^r  \beta_{\rest\dd W}\big) \]
In other words, a neighbourhood of the positive boundary is symplectomorphic to (a neighbourhood of the boundary of) the negative symplectisation of $\dd_+ W$. Similarly, a neighbourhood of the  negative boundary is symplectomorphic to a positive  symplectisation.
\end{prop}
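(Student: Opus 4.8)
The plan is to flow the positive boundary inward along the Liouville vector field $Y$, relying on the two identities $\i_Y \b = 0$ and $\mathcal{L}_Y \b = \b$. The first holds because $\b = \i_Y d\b$ gives $\i_Y \b = \i_Y \i_Y d\b = 0$; the second is Cartan's formula, $\mathcal{L}_Y \b = d(\i_Y \b) + \i_Y d\b = 0 + \b$.

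First I would check that $\b_{\rest \dd_+ W}$ is a contact form. Contracting the volume form $\w^n = (d\b)^n$ with $Y$ gives $\i_Y \w^n = n\, \b \we \w^{n-1}$. Since $Y$ is positively transverse to $\dd_+ W$ and $\w^n > 0$, the restriction of $\i_Y \w^n$ to $\dd_+ W$ is a positive volume form; hence $\b \we (d\b)^{n-1}$ restricts to a volume form on $\dd_+ W$, which is exactly the contact condition for $\b_{\rest \dd_+ W}$.

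Next I would build the collar. Because $Y$ is transverse to $\dd_+ W$ and points outward there, for $\e > 0$ small the backward flow $\Phi : (-\e, 0]_r \times \dd_+ W \to W$, $(r, x) \mapsto \phi_Y^r(x)$, is a diffeomorphism onto a neighbourhood $\N(\dd_+ W)$, with $\Phi_* \dd_r = Y$ and $\Phi_{\rest r = 0} = \Id$. It then remains to identify $\Phi^* \b$. Since $\i_{\dd_r} \Phi^* \b = \Phi^*(\i_Y \b) = 0$, the form $\Phi^* \b$ has no $dr$ component, so it is a family $\b_r$ of $1$-forms on $\dd_+ W$. The Lie derivative identity gives $\dd_r \b_r = \Phi^*(\mathcal{L}_Y \b) = \b_r$, and together with the initial condition $\b_0 = \b_{\rest \dd_+ W}$ this integrates to $\Phi^* \b = e^r \b_{\rest \dd W}$, which is the claimed model $\big((-\e, 0]_r \times \dd W, e^r \b_{\rest \dd W}\big)$.

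The argument is essentially a routine application of the Liouville flow, so I do not expect a serious obstacle; the only points requiring care are the orientation and transversality sign ensuring that $\b_{\rest \dd_+ W}$ is a \emph{positive} contact form, and checking that $\e$ can be chosen uniformly so that $\Phi$ is a diffeomorphism of the whole collar (compactness of $\dd_+ W$ suffices). For the negative boundary one runs the identical computation with $Y$ negatively transverse to $\dd_- W$: the forward flow $\phi_Y^r$, $r \in [0, \e)$, then yields a symplectomorphism onto $\big([0, \e)_r \times \dd_- W, e^r \b_{\rest \dd_- W}\big)$, that is, a neighbourhood of the boundary of the positive symplectisation of $\dd_- W$.
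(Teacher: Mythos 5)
Your proof is correct and is the standard Liouville-flow argument (the identities $\i_Y\b=0$, $\mathcal{L}_Y\b=\b$, the contraction $\i_Y\w^n = n\,\b\we(d\b)^{n-1}$ for the contact condition, and integrating $\dd_r\b_r=\b_r$ along the collar); the paper states this proposition without giving a proof, and yours is precisely the expected one. The two points you flag — the sign/orientation check and the uniform choice of $\e$ via compactness of the boundary — are indeed the only places requiring care.
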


The {\it completion} of a Liouville cobordism $(W,\beta)$, denoted $(\hat{W}, \hat{\beta})$, is obtained by gluing the positive (resp. negative) symplectisation of $\dd_+ W$ (resp. $\dd_- W$) via the previous coordinates : $\hat{W} = W \cup (\RR^\pm_r\times \dd_\pm W, e^r \beta_{\rest\dd_\pm W})$.

\begin{rmk}
\cite{Cou12_Symplectom_not_contactom} The symplectisations of discincts contact manifolds can sometimes be symplectomorphic, thus it might be more natural the directly work with completed manifolds.
\end{rmk}

\begin{exs} \label{ex.Liouville_Cobord} 
The disk $(D^{2n}, -x_i dy_i)$ is a Liouville domain, its Liouville vector field is radial and its completion is $\RR^2n$.
For a smooth manifold $M$, its cotangent $(D^*M, -p\cdot dq)$ is a Liouville domain, its Liouville vector field is $p.\dd_p$, and its completion $T^*M$.
\end{exs}

\begin{dfn} \label{dfn.LagrExact} 
Let $L \subset (W, \b)$ be a Lagrangian immersed in a Liouville cobordism. It is exact if it satisfies one of the following equivalent  conditions :
\begin{itemize}
\item $\b_{\rest TL}$ is an  exact 1-form, ie $\b_{\rest TL} = df$, and $f$ is constant at near the boundary
\item $L$ lifts to a Legendrian in the contactisation of $W$.
\end{itemize}
\end{dfn}
The boundary of such a Lagrangian are Legendrians submanifolds $\dd_\pm \subset \dd_\pm W$, and $L$ will be called an exact Lagrangian cobordism from $\dd_+ L$ to $\dd_- L$.

\subsection{From a Legendrian isotopy to an exact cobordism} \label{ssec.ConstrCobordLagr}

Let $(\l_t)_{t \in [0,1]}$ be a path of contact forms on a manifold $V$. Then there exists a Liouville cobordism from $(V, C \l_1)$ to $(V, \l_0)$, for $C \in \RR$ big enough. To construct it, choose a function $\psi : \RR_s \ra [0, 1]$ increasing, vanishing for $s << 0$ and evaluating to $1$ for $s >> 0$. Then if $\psi'$ is small enough, the manifold 
\[(W, \b) = (\RR_s \times V, e^s \l_{\psi(s)})\]
is a Liouville domain. Indeed, 
\[d\b  = e^s (d\l_{\psi(s)} + ds \wedge \l_{\psi(s)} + \psi'(s)ds \wedge \dd_s \l_{\psi(s)} )\]
and the symplectic condition is open.

More generally, if $\Lambda_s \subset (V,\lambda_s), s \in [0,1]$ is a path of Legendrians, we can construct an exact Lagrangian cobordism : 
\begin{lem}
There exists a Liouville cobordism $(W, \b)$, as well as an exact Lagrangian cobordism $L \subset (W, \b)$, going from $\Lambda_S \subset (V, C \l_S)$ to $\Lambda_0 \subset (V, \l_0)$ (for $C$ big enough).
\end{lem}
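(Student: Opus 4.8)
The plan is to use the Liouville cobordism $(W,\b) = (\RR_s \times V, e^s \l_{\psi(s)})$ built just above as the ambient space, and to realise the Legendrian path as an exact Lagrangian trace inside it. Normalise the cutoff so that $\psi \equiv 0$ on $\{s \le 0\}$ and $\psi \equiv 1$ on $\{s \ge s_+\}$; then the two ends of $W$ are $(V, \l_0)$ at $\{s=0\}$ and $(V, C\l_1)$ at $\{s=s_+\}$, with $C = e^{s_+}$, as required. After a reparametrisation of the path in its own parameter I may furthermore assume that $\La_u$ and $\l_u$ are independent of $u$ near $u=0$ and $u=1$, so that $W$ and the Lagrangian to be built are genuinely product--like near the two ends.

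First I would turn the path of Legendrians into an ambient contact isotopy. Applying Gray's theorem (\cref{prop.Moser}) to the path of contact structures $\xi_u = \ker\l_u$ together with the Legendrian Moser theorem (\cref{prop.Moser_Leg}) produces a family of diffeomorphisms $\theta_u \colon V \to V$ with $\theta_0 = \Id$, $\theta_u^*\l_u = f_u\,\l_0$ for positive functions $f_u$, and $\theta_u(\La_0) = \La_u$; let $H_u$ denote the associated (time--dependent) contact Hamiltonian $\l_u\big(\dd_u\theta_u \circ \theta_u^{-1}\big)$. I would then define $L$ as the Lagrangian suspension of this isotopy: informally, the trace $\bigcup_s \{s\}\times\La_{\psi(s)}$, but with its height in the symplectisation direction corrected by a primitive of $H_{\psi(s)}$, so that the Legendrian slices are tilted rather than horizontal. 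Near the two ends, where $\psi$ is locally constant and the isotopy is stationary, this correction is trivial and $L$ coincides with the cylinders $\{s \le 0\}\times\La_0$ and $\{s\ge s_+\}\times\La_1$; hence $L$ meets $\dd_\pm W$ in $\La_0$ and $\La_1$ and is a cobordism between them. Exactness of $\b_{\rest TL}$ in the sense of \cref{dfn.LagrExact}, with a primitive that is constant near the ends, then follows because the only contribution to $\b_{\rest TL}$ comes from the $\dd_s$--direction (the identity $(\l_{\psi(s)})_{\rest T\La} = 0$ kills the Legendrian directions) and because the isotopy is stationary there.

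The main obstacle is precisely the construction of this suspension and the verification that it is an embedded exact Lagrangian. The naive horizontal trace $\{s\}\times\La_{\psi(s)}$ is \emph{not} Lagrangian: pulling $\b$ back to it gives $e^s\psi'(s)\,H_{\psi(s)}\,ds$, whose dependence on the point of $\La$ obstructs both closedness and the existence of a primitive. Resolving this is the heart of the matter and is where I would spend the effort: one must genuinely tilt $L$ in the $\dd_s$--direction so as to absorb the point--dependence of the contact Hamiltonian, which is cleanest after localising near the swept--out region to standard jet neighbourhoods (\cref{Vois_st_leg}), where the suspension is the $1$--jet of an explicit function and is manifestly exact; the global $L$ is then assembled from these local pieces. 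Finally, taking $\psi'$ small --- the same smallness already needed for $(W,\b)$ to be symplectic --- keeps $L$ embedded and $C^0$--close to the horizontal trace.
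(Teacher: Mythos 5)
Your proposal is correct in substance but takes a genuinely different route from the paper's proof. You correctly identify the key obstacle (the horizontal trace $\bigcup_s\{s\}\times\La_{\psi(s)}$ is not Lagrangian, since $\b$ restricts to it as $e^s\psi'(s)H_{\psi(s)}\,ds$ with point-dependent coefficient), but you resolve it by keeping the Liouville form $e^s\l_{\psi(s)}$ and deforming the submanifold into a Lagrangian suspension. The paper does the opposite: it keeps $L$ as the naive trace $\{(s,x),\ x\in\La_s\}$ and modifies the Liouville form to $\b = e^s(\l_s - df)$, where $f(s,x)=f_s(x)$ is the contact Hamiltonian of the isotopy, so that exactness is immediate with primitive $e^sf$ (following Ekholm's appendix in \cite{Ekh06_Rational_SFT_Z2_cobord}); the only thing left to check is that the perturbed form is still Liouville, which again holds after slowing down the isotopy. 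Your route is essentially the alternative construction the paper itself records in the Remark immediately after the proof (the Hamiltonian push-off $\phi_H(\RR\times\La_0)$, following \cite{Cha06_Lagr_concord_knots}\cite{CCDr16_Posit_Leg_isot_&_Floer_th}). The trade-off: the paper's version makes exactness a one-line computation at the cost of a nonstandard ambient Liouville form, while yours keeps the standard form but pushes all the work into the suspension, which you only sketch. One imprecision worth flagging there: tilting purely in the $\dd_s$-direction cannot absorb the $x$-dependence of $e^sH_{\psi(s)}$ wherever the contact Hamiltonian vanishes; the correct tilt is in the Reeb direction of the local jet model (equivalently, one applies the flow of a Hamiltonian linear in $e^s$ to the cylinder $\RR\times\La_0$), after which your "1-jet of an explicit function" description is the right one. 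This is a repairable gap in a known construction, not a flaw in the strategy.
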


\begin{proof}
Contrary to the case of contact forms, the graph of a Legendrian isotopy does {\it not} directly define a Lagrangian. 
However we can modify the Liouville form to make it work, as in \cite[Appendix A]{Ekh06_Rational_SFT_Z2_cobord} : if $\La_s \subset (V, \l)$ is induced by the flow of $X_s$, associated to $f_s : V \ra \RR$, then the graph of this path
\[L = \{(s, x), x \in \La_s\} \subset \RR_s \times V \]
in an exact Lagrangian for the Liouville form
\[ \b = e^s(\l_s - df )\]
where $f(s, x)= f_s(x)$. Once again one might have to slow down the isotopy so that this is indeed a Liouville Form. Note we have 
$\b_{\rest TL} = d(e^s f)$. 
\end{proof}

\begin{rmk} Another construction is also possible, using Hamiltonian vector fields, see \cite{Cha06_Lagr_concord_knots} \cite{CCDr16_Posit_Leg_isot_&_Floer_th} for more details. Consider the cobordism $(\RR \times V, e^s \mu_s \l_0)$, where $\mu_s$ is given by \cref{prop.Moser} (as previously, one might have to slow down the isotopy to get a Liouville form). Choose a time-dependent Hamiltonian $H_u$, vanishing at $- \ity$ and given by $e^s f_u(x)$ for $s \gg 0$. Then $\phi_H(\RR \times \La_0)$ is the desired exact cobordism.
\end{rmk}

\subsection{Stein and Weinstein manifolds}
A {\it Weinstein manifold } $(W,\beta,f)$ is a Liouville domain, with a Morse function $f$ such that :
\begin{itemize}
\item  the Liouville vector field $Y$ is a pseudo-gradient of $f$ : $\forall x \notin \Crit(f), df_x(Z(x)) > 0$ ; 
\item the boundary of $W$ is a regular level of $f$.
\end{itemize}
Note that $f$ can be extended to the completion $\hat W$ without adding critical points.

The skeleton of the manifold, defined by  $\Core (W) = \under{\cap}{t> 0} \phi_Y^{-t} (W)$,
is then the union of the  $Y$-stables manifolds of the critical points of $f$, which are isotropic and consequently of dimension smaller than $n$ (see \cite{CiEl12_Stein_Weinstein} and \cite{ElGr91_Convex}). 

\begin{rmk} 
Although the skeleton is defined for any Liouville domain, it can a priori be of codimension 1 as noticed in \cite{McD91_Sympl}.
\end{rmk}

The pair $(\hat W, J_0)$ is a {\it Stein manifold} if $J_0$ is an integrable complex structure, ie if $(W, J_0)$ can be properly embedded in $(\CC^N, i)$. It is equivalent to the existence of a function $\phi : \hat W \ra \RR$ which is proper, bounded below, and strictly pluri-sub-harmonic, ie such that $\Delta_J \phi := - d(d\phi \circ J)$ is a symplectic form.

In particular $\b = - d\phi \circ J$ is a Liouville form, however $J$ might not be adapted in the sens of \cref{ssec.Struct_p-complexe}, because the level $\phi$ are a priori different from the levels of the  Liouville vector field, see \cite[§3.2]{CGHH10_Sutures}.

\begin{rmk}
A Stein manifold is automatically Weinstein : $\b = -d\phi \circ J_0$ is a Liouville form, and the corresponding Liouville vector field is the negative gradient of $\phi$. Conversely, Weinstein manifolds can be deformed into Stein manifolds, see \cite[Theorème 13.9]{CiEl12_Stein_Weinstein}. \end{rmk}

\begin{ex} 
\cite{CiEl13_Flexible_Weinst} The cotangent  $(T^*M, \b = -p \cdot dq + dh, \L = (p - \nabla g,0),f=  h(q) + |p|^2)$ is a Weinstein manifold, where $h$ is a small Morse function on $M$. 
 \end{ex}

\section{Sutured Legendrians and convex hypersurfaces}  \label{sec.Sutured_Leg_cvx_hsurf}

Sutured manifolds were first introduced by Gabai \cite{Gab83_Folia} as a tool to study 3-manifolds, and later extended to the contact setting by Colin-Ghiggini-Hutchings-Honda \cite{CGHH10_Sutures}. In this section we introduce the notion of sutured Legendrian, which further generalise this construction.

\subsection{Sutured contact manifolds} \label{ssec.Sut_leg}

\begin{dfn} \cite{CGHH10_Sutures}
A {\it sutured contact manifold} is a collection $(V, \Ga, \l, \N_0, \psi)$ where ;
\begin{itemize}
    \item the pair  $(V, \ker \l)$ is a  compact, oriented, contact $(2n+1)$-manifold with corners ;
    \item $\Ga$ is $(2n-1)$-submanifold included in $\dd V$ ;
    \item $\N_0$ is a neighbourhood of $\Ga$, and $\psi$ is a diffeomorphism 
    \[\N_0 \over{\psi}{\simeq} (-\e, 0]_\t \times [-1,1]_t \times \Ga\]
    providing coordinates\footnote{As in \cite{CGHH10_Sutures} this product is actually oriented as $[-1,1]_t \times (-\e, 0]_\t \times \Ga$, but we want to think about $t$ (resp. $\t$) as the vertical (resp. horizontal) direction.} ;
\end{itemize}
satisfying the following conditions :
\begin{enumerate}
    \item the boundary of $V$ splits into
        \[\dd V \simeq R_+ \under{\cup}{\{1\} \times \Ga} [-1, 1] \times \Ga \under{\cup}{\{1\} \times \Ga} R_-,\]
    where the corners are exactly the gluing loci ;
    \item in coordinates, $\Ga = \{\t = t =0\}$ and 
        $ \dd V \cap \N_0 = \{t = 1\} \cup \{\t = 0\} \cup \{t=-1\}$,
    where once again the corners of $V$ are exactly the gluing loci ;
    \item \label{item.dfn_sut_R=Liouv} the pair $(R_+, \l_{\rest R_+})$ (resp. $(R_-, \l_{\rest R_-})$), oriented as the boundary of $V$ (resp. with reversed orientation), is a Liouville domain ;
    \item \label{item.dfn_sut_contact_f} on $\N_0$, we have $\l = C.dt + e^\t \l_\Ga$, where $\l_\Ga$ is a contact form on $\Ga$ (independent of $t$ and $\t$, and without term in $dt$ and $d\t$).
\end{enumerate}
 \end{dfn}
The Liouville forms $\l_{\rest R_\pm}$ will be denoted $\b_\pm$, 
and the Reeb vector field of $(V, \l)$ (resp. $(\Ga, \l_\Ga)$ will be $R$ (resp. $R_\Ga$).
When the collection is unambiguous, a sutured manifold will only be noted $(V, \Ga, \l)$.

We list some consequences from this definition which will be used afterwards, for more precise proofs we refer to \cite[§2]{CGHH10_Sutures}.

First of all, on the neighbourhood $\N_0$, the Reeb vector field is $C^{-1} \dd_t$ and the contact structure splits into 
\[   \ker \l = \< \dd_\t, C R_\Ga -  e^\t \dd_t \> \oplus  \ker \l_\Ga.\]

The conditions \cref{item.dfn_sut_R=Liouv} and \cref{item.dfn_sut_contact_f} imply that $(R_\pm, \b_\pm)$ are Liouville domains : since the Liouville vector field is $\dd_\t$, it is outgoing, hence $R_\pm$ have no negative boundary.

Moreover the condition \cref{item.dfn_sut_R=Liouv} imply that the Reeb vector field of $\l$ is positively (resp. negatively) transverve to $R_+$ (resp. $R_-$). Using its flow we obtain neighbourhoods of $R_\pm$ contactomorphic to $((1-\e, 1]_t \times R_+, Cdt + \b_+)$ and $([-1, -1 +\e]_t \times R_-, Cdt + \b_-)$, extending the coordinate $t$ to a neighbourhood of all of $\dd V$. Note that the coordinate $\t$ can be recovered by integrating the Liouville vector field on each $t$-level, since on $\N_0$ it is $\dd_\t$.

\begin{rmk}
If there exists an exact symplectomorphism $(R_+, \b_+) \simeq (-R_-, \b_-)$, the sutured contact manifold will be called {\it balanced}. It could also be described as a manifold contactomorphic to the symplectisation of $(R_+, \b_+)$ near the boundary (or away from a compact, if working with completed manifolds, see \cref{ssec.Completions}).
\end{rmk}

\subsection{Sutured Legendrians}

\begin{dfn}
A {\it sutured Legendrian} is an embedded Legendrian submanifold $\La \subset (V, \l, \Ga)$ which is :
\begin{itemize}
    \item centered, ie its boundary $\dd \La$ is included in the suture $\{\t = t=  0\}$ ;
    \item cylindrical, ie on $\N_0$ we have $\La = (-\e, 0] \times \{0\} \times \dd \La$ for $\e$ small enough.
\end{itemize}
\end{dfn}

Note that the second condition implies that the boundary $\dd \La$ is a Legendrian submanifold of $(\Ga, \l_\Ga)$.
We refer to \cref{fig.Sutured_legendr} for a depiction of the situation.

\begin{figure}[h!]
\center
\def\svgwidth{8cm}
\import{./Dessins/}{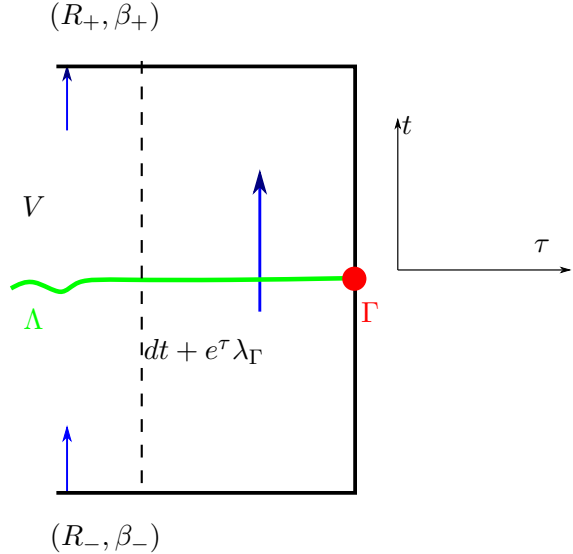}
\caption{A sutured Legendrian, in green, inside a sutured contact manifold.}
\label{fig.Sutured_legendr}
\end{figure}

\begin{rmk} $\La$ could also be called a {\it Legendrian filling} of $(\dd \La, \Ga, \l_\Ga)$. We can also relax this definition to allow for several $t$-levels near the boundary, but forbidding Reeb chords in the boundary (ie the projection of $\dd \La \subset [-1,1] \times \Ga$ to $\Ga$ is embedded).
\end{rmk}

Those definitions extend Lagrangian fillings in the following way : for $C$ big enough, an exact, immersed, cylindrical Lagrangian filling $(L, W, \b)$ lifts to a sutured Legendrian 
\[ \La = \{(-f(x), x)\} \subset ([-1, 1] \times W, C dt + \b),\]
where $f$ is a primitive of $\b_{\rest L}$ vanishing on the boundary, and auto-intersection points lift to Reeb chords.

\subsection{Convex hypersurfaces} \label{ssec.cvx_hsurf}

\begin{dfn} Consider $(V, \xi)$ a contact manifold. A vector fields $X$ is {\it contact} if it satisfies one of the following equivalent conditions : 
\begin{itemize}
\item the flow  of $X$ preserves $\xi$, ie $(\phi_X^t)^*\xi = \xi$ ;
\item for any 1-form $\lambda$ of kernel $\xi$, we have $\L_X \lambda = f\lambda$, where $f \in C^\infty(V,\RR)$ ;
\item there exists a 1-form $\lambda$ of kernel $\xi$ such that $\L_X \lambda = f\lambda$, where $f \in C^\infty(V,\RR)$.
\end{itemize}
\end{dfn}

\begin{lem} \label{lem.ContactVf} Let $(V,\xi)$ be a contact manifold. There is a bijection the contact vector fields and the functions $\C^\infty (V,\RR)$. If we choose an adapted contact form, a bijection is given by  
\[ \vphi \in \C^\infty (\RR) \mapsto  X_\vphi = \vphi R_{\lambda} + v,  \]
where $v\in \xi$ is such that $(\i_v d\l)_{\rest \xi} = d\vphi_{\rest \xi}$.
\end{lem}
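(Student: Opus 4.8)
The plan is to exhibit explicit mutually inverse maps between contact vector fields and $\C^\infty(V,\RR)$, the inverse of $X \mapsto \l(X)$ being exactly the formula in the statement. First I would record the two pieces of linear algebra that drive everything. Since $TV = \RR\,R_\l \oplus \xi$ (because $\l(R_\l) = 1$ and $\xi = \ker\l$), every vector field decomposes uniquely as $X = \vphi R_\l + v$ with $\vphi = \l(X) \in \C^\infty(V,\RR)$ and $v \in \xi$. Moreover, the contact condition $\l \wedge d\l^n > 0$ says precisely that $d\l_{\rest \xi}$ is non-degenerate, so the bundle map $\xi \to \xi^*$, $v \mapsto (\i_v d\l)_{\rest \xi}$, is a fibrewise isomorphism. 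Consequently, for any $\vphi$ there is a unique $v \in \xi$ solving $(\i_v d\l)_{\rest \xi} = d\vphi_{\rest \xi}$, and this is what defines $X_\vphi = \vphi R_\l + v$.

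Next I would verify that $X_\vphi$ is genuinely a contact vector field. Using Cartan's formula together with $\l(X_\vphi) = \vphi$, I compute
\[ \L_{X_\vphi}\l = d(\i_{X_\vphi}\l) + \i_{X_\vphi} d\l = d\vphi + \i_v d\l, \]
where the Reeb contribution drops out since $\i_{R_\l} d\l = 0$. The one-form $d\vphi + \i_v d\l$ annihilates $\xi$ by the defining equation for $v$ (up to the sign fixed by the interior-product convention), and any one-form vanishing on $\xi$ is a multiple of $\l$; evaluating on $R_\l$ identifies the factor, giving $\L_{X_\vphi}\l = d\vphi(R_\l)\,\l$. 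Hence $X_\vphi$ satisfies the third defining condition of a contact vector field relative to the chosen form $\l$.

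Finally I would close the loop. Conversely, if $X$ is contact with $\L_X \l = f\l$, set $\vphi = \l(X)$ and write $X = \vphi R_\l + v$; restricting the identity $d\vphi + \i_v d\l = f\l$ to $\xi$ kills the right-hand side and shows that $v$ solves the same equation as above, whence $X = X_\vphi$. Thus $\vphi \mapsto X_\vphi$ is inverse to $X \mapsto \l(X)$, which yields the claimed bijection (for the chosen adapted form $\l$).

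The argument is entirely formal once the two linear-algebra facts are in hand, so I do not expect a serious obstacle. The one point requiring genuine care is the non-degeneracy of $d\l_{\rest\xi}$ — that is, the fact that the \emph{contact} condition is exactly what lets us invert $v \mapsto (\i_v d\l)_{\rest\xi}$ and thereby solve for $v$ — together with keeping the sign convention for $\i_v d\l$ consistent so that $d\vphi + \i_v d\l$ really does annihilate $\xi$.
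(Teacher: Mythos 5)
Your proof is correct and is the standard argument for this folklore lemma, which the paper states without proof: decompose $X = \l(X)R_\l + v$ using $TV = \RR R_\l \oplus \xi$, use non-degeneracy of $d\l_{\rest\xi}$ to solve uniquely for $v$, and check via Cartan's formula that $\L_{X_\vphi}\l = d\vphi(R_\l)\,\l$. The sign point you flag is real: with the usual convention $\i_v d\l = d\l(v,\cdot)$, the computation yields $(\i_v d\l)_{\rest\xi} = -d\vphi_{\rest\xi}$, so the equation as printed in the lemma (and in the paper's other occurrences of this formula) only holds with the opposite interior-product convention; this does not affect the validity of your argument.
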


\begin{rmks} \begin{itemize}
\item If $\tilde{\vphi}$ is a $C^1$-perturbation of $\vphi$, then $X_{\tilde{\vphi}}$ is $C^0$-close from $X_\vphi$ ;
\item $X_\vphi$ is a Reeb vector field iff $\vphi >0$ ;
\item We can interpolate "in time" (if $X$ and $Y$ are contact vector fields, $(1-t)X + tY$ is contact), and "in space" on disjoints closed subsets (if $X$ and $Y$ are two contact vector fields on those subsets, we can interpolate between the corresponding  functions).
\end{itemize}
\end{rmks}

An hypersurface $\Si \subset (V, \xi)$ is {\it convex} if there exist a contact vector field $X$ transverse to $\Si$. A direct consequence is that the set 
\[ \Ga_X = \{ x \in \Si, X_x \in \xi_x \}  \]
is a contact submanifold of codimension $2$, dividing $\Si$ into two Liouville domains $R_\pm$. This configuration is actually a characterisation of a convex hypersurface :

\begin{lem} \cite[Lemme 2.2]{CGHH10_Sutures} \label{lem.cvx_adapt}
Let $\Sigma^{2n} \subset (V^{2n+1}, \xi)$ be a closed hypersurface, without boundary. $\Sigma$ is convex if and only if there exists an orientation of $\Sigma$, a submanifold $\Gamma^{2n-1} \subset \Sigma$ and a contact form $\lambda$  of kernel $\xi$ such that
\begin{itemize}
\item $(\Gamma, \xi \cap T\Gamma)$ is a contact manifold, oriented such that the contact form is positive.
\item $\Gamma$ splits $\Sigma$ alternating parts $R_\pm$, such $R_+$ induces on $\Gamma$ the previous orientation (ie $\Gamma = \dd R_+ = - \dd R_-$).
\item The Reeb vector field associated to $\l$ is positively (resp. negatively) transverse to $R_\pm$.
\end{itemize}
A contact form satisfying those conditions will be called {\it adapted} to the convex boundary. 
\end{lem}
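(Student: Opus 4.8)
The plan is to prove both implications through a single device: a convex hypersurface is the same as one carrying an \emph{invariant neighbourhood} $\RR_u\times\Si$ on which the transverse contact field is $X=\dd_u$ and a suitably normalised contact form has the shape $\l = \b + H\,du$, with $\b\in\Omega^1(\Si)$ and $H\in C^\infty(\Si)$ both independent of $u$. Everything is then read off from the contact condition for such a form. Writing $d\l = d\b + dH\we du$ one computes
\[ \l\we(d\l)^n = \big(n\,\b\we(d\b)^{n-1}\we dH + H\,(d\b)^n\big)\we du, \]
so that, putting $V_0 := n\,\b\we(d\b)^{n-1}\we dH + H(d\b)^n$, the contact condition is exactly $V_0 > 0$ as a volume form on $\Si$ (which also pins down the orientation). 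Decomposing the Reeb field as $R_\l = R^\Si + c\,\dd_u$ and solving $\i_{R_\l}d\l = 0$, $\l(R_\l)=1$ gives $\i_{R^\Si}d\b = c\,dH$, $dH(R^\Si)=0$, and, by matching the $du$-free parts, the key identity $c\,V_0 = (d\b)^n$. Thus $R_\l$ is transverse to $\Si$ at a point iff $d\b$ is nondegenerate there, the sign of the transversality being that of $(d\b)^n/V_0$.

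The three clauses follow from this dictionary. Setting $\Ga := \{H=0\}$, along $\Ga$ we have $V_0 = n\,\b\we(d\b)^{n-1}\we dH > 0$, which forces $dH\neq 0$ (so $\Ga$ is a smooth codimension-$2$ submanifold) and $\b|_\Ga\we(d\b|_\Ga)^{n-1}\neq 0$, i.e.\ $\b|_\Ga$ is a contact form on $\Ga$. On $R_\pm := \{\pm H > 0\}$ the pair $(R_\pm,\b|_{R_\pm})$ is a Liouville domain precisely when $\pm(d\b)^n > 0$, which by $c V_0 = (d\b)^n$ is the same as $R_\l$ being positively (resp.\ negatively) transverse to $R_\pm$; the Liouville field $\i_Y d\b = \b$ then exits through $\Ga$ using $dH|_\Ga\neq 0$. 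Hence \emph{adapted} is equivalent to ``$V_0 > 0$ together with $\pm(d\b)^n > 0$ on $R_\pm$''.

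For the implication (adapted $\Ra$ convex) I would produce the transverse contact field directly from its Hamiltonian via \cref{lem.ContactVf}: take $\vphi$ on $\Si$ with $\vphi > 0$ on $R_+$, $\vphi < 0$ on $R_-$, and $\vphi = 0$, $d\vphi\neq 0$ transversally along $\Ga$, and form $X_\vphi = \vphi R_\l + v$. Its component normal to $\Si$ is $\vphi\,c + c_v$, where $c_v$ is the normal component of $v$; away from $\Ga$ the term $\vphi c$ has a fixed sign by the Reeb-transversality clause, while near $\Ga$, where $\vphi\to 0$, the vector $v$ (determined by $d\vphi$, which is conormal to $\Ga$) has nonzero normal component because $\Ga$ is contact — here one uses the local splitting $\ker\l = \langle\dd_\t,\ C R_\Ga - e^\t\dd_t\rangle\oplus\ker\l_\Ga$ near $\Ga$. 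Rescaling $\vphi$ so that the Reeb term dominates off $\Ga$ makes $X_\vphi$ transverse everywhere, so $\Si$ is convex.

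For the converse (convex $\Ra$ adapted) I would flow $X$ to trivialise a neighbourhood $\RR_u\times\Si$ with $X=\dd_u$, and, since $X$ preserves $\xi$, rescale the contact form into the invariant shape $\l = \b + H\,du$; then $\Ga := \{H=0\}$ and $R_\pm := \{\pm H>0\}$, oriented by $V_0 > 0$, are the required data, with $\Ga$ contact by the computation above. The main obstacle is precisely the rest of this direction: an arbitrary transverse contact field yields only $c V_0 = (d\b)^n$, and nothing forces the Reeb-tangency locus $\{(d\b)^n = 0\}$ to lie inside the dividing set $\{H=0\}$, so tangencies may occur in $R_\pm$ and then $(R_\pm,\b)$ is not honestly Liouville and $R_\l$ is not transverse to $R_\pm$. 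To repair this I would modify the invariant form within $R_\pm$ rel a collar of $\Ga$ (for instance replacing $\l$ by $g\l$ for a well-chosen $g>0$, or flowing along a Liouville-type field) so as to push all tangencies into $\Ga$ and achieve $\pm(d\b)^n > 0$ on $R_\pm$. This Giroux-type normalisation, matching the symplectic collar of $\Ga$ with the Liouville structures on $R_\pm$, is the technical heart of the proof.
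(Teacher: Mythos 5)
First, a point of reference: the paper does not prove this lemma itself but quotes it from \cite{CGHH10_Sutures}, so your proposal has to be measured against that proof. Your invariant-form dictionary is correct and is indeed the standard starting point: writing $\l=\b+H\,du$ on an $X$-invariant neighbourhood, the contact condition $V_0>0$, the identity $c\,V_0=(d\b)^n$, and the contactness of $\Ga=\{H=0\}$ all check out. The problem is that in the direction (convex $\Ra$ adapted) you correctly diagnose the crux — the invariant form need not have its Reeb tangency locus inside $\{H=0\}$ — but you leave the repair open, and the repair is precisely the content of the lemma. It is not a delicate ``push the tangencies into $\Ga$'' deformation: one rescales the invariant form by $g=1/|H|$ away from $\Ga$. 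Over $R_+$ this gives $\l/H=du+\b/H$, the contactisation of $(R_+\sms\Ga,\b/H)$, whose Reeb field is $\dd_u=X$, manifestly positively transverse (similarly over $R_-$ with $-\l/H$). The point you are missing is that your own contact condition is \emph{exactly} the Liouville condition for $\b/H$: a direct computation gives $\big(d(\b/H)\big)^n=V_0/H^{n+1}$, so the rescaled form has no Reeb tangencies over $R_\pm$ at all, and the only remaining work is to cap off $1/|H|$ near $\Ga$ using the local model $f(\t)dt+g(\t)\l_\Ga$ — which is what produces \cref{lem.cvx_st_2} as a by-product of the proof. Without this specific choice of $g$, your second paragraph's equivalence (``adapted $\Leftrightarrow$ $V_0>0$ and $\pm(d\b)^n>0$ on $R_\pm$'') only characterises when the \emph{given} invariant form is adapted, not when \emph{some} form is, which is what the statement asserts.

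There is a second, more local, flaw in the direction (adapted $\Ra$ convex). Your plan to ``rescale $\vphi$ so that the Reeb term dominates'' cannot work as stated: by \cref{lem.ContactVf} the correction $v$ is determined by $\i_v d\l_{\rest\xi}=d\vphi_{\rest\xi}$, so it is linear in $\vphi$ and $X_{N\vphi}=N\vphi R_\l+Nv$ — a constant rescaling leaves the ratio of the two terms unchanged. The fix is to take $\vphi\equiv\pm1$ outside a collar of $\Ga$, where $v=0$ and $X_\vphi=\pm R_\l$ is transverse by the hypothesis on the Reeb field, and to handle the collar separately using the normal form of $\xi$ near the contact submanifold $\Ga$ (equivalently, to build the invariant model $\b_\pm\pm dt$ glued along $f\,dt+g\l_\Ga$ by hand and identify it with a neighbourhood of $\Si$ by a Moser argument). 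Note finally that the splitting of $\ker\l$ near $\Ga$ that you invoke at this step is itself part of the sutured normal form established by \cref{lem.cvx_st_2}, a corollary of the proof of the present lemma, so as written your argument is mildly circular there.
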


Note that the last condition is equivalent to requiring that $(R_+, \lambda)$ and $(-R_-, \lambda)$, where $R_+$ is oriented as $\Sigma$ and $R_-$ is endowed with the opposite orientation, are Liouville domains.

As a corollary of the proof, we get a normalisation of the contact structure on a neighbourhood of the convex hypersurface.
\begin{cor}\cite[Corollary 2.5]{CGHH10_Sutures}\label{lem.cvx_st_2} If $X$ is a contact vector field transverse to $\Sigma$, there exists a contact form $\lambda$ of kernel $\xi$ and coordinate functions $t : \N(\Sigma) \ra \RR$, $\tau: \N(\Gamma_X) \ra \RR$ such that :
\begin{itemize}
\item $X=\dd_t$, $\Sigma=\{t=0\}$ and $\Gamma=\{t=\tau=0\}$ ;
\item on $\N(\Sigma) \setminus \N(\Gamma)$, $\lambda = \pm dt + \beta_\pm$ where $\beta$ doesn't depend on $t$ (and has no term $dt$) ;
\item on $\N(\Gamma)$, $\lambda = f(\tau) dt + g(\tau) \l_\Ga$ with $\{f=0\}=\{0\}$, $g>0$, $\l_\Ga$ a contact form $\Gamma$ and $f'g - fg' > 0$. 
\end{itemize}
\end{cor}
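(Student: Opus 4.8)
The plan is to reuse the flow of $X$ to build the transverse coordinate $t$, then to rescale the contact form so that it becomes invariant in the $t$-direction, and finally to straighten the picture near $\Ga$ into the stated $\t$-normal form. The first two normalisations are formal; the content lies in matching everything near the dividing set.

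First I would use that $X$ is contact and transverse to $\Si$. Its flow $\phi_X^t$ gives a diffeomorphism $\N(\Si) \simeq (-\e, \e)_t \times \Si$ with $X = \dd_t$ and $\Si = \{t = 0\}$, and since $X$ is contact the hyperplane field $\xi$ is invariant under $\dd_t$. Picking any contact form $\l_0$ for $\xi$, contactness gives $\L_X \l_0 = \mu \l_0$ for some function $\mu$. Solving the ODE $\dd_t \ln h = -\mu$ with $h(0, \cdot) = 1$ produces a positive function $h$ such that $\tilde\l = h\l_0$ satisfies $\L_{\dd_t} \tilde\l = 0$. Hence $\tilde\l = u\, dt + \b$, where $u = \tilde\l(\dd_t)$ and $\b$ is pulled back from $\Si$, both independent of $t$ and with $\b$ having no $dt$-term.

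By construction $\Ga_X = \{x \in \Si, X_x \in \xi_x\}$ equals $\{u = 0\} \cap \Si$, and evaluating $\tilde\l \wedge (d\tilde\l)^n$ along $\{u=0\}$ (the only source of a $dt$-factor is $du \wedge dt$) forces $du \neq 0$ there, so $\Ga$ is cut out transversally in $\Si$ and $u$ has a fixed sign on each of $R_\pm$. Away from a neighbourhood of $\Ga$, rescaling $\tilde\l$ by the $t$-independent positive function $1/|u|$ yields $\l = \pm dt + \b_\pm$ on $\N(\Si) \sms \N(\Ga)$, with $\b_\pm$ independent of $t$ and without $dt$-term; this is the second bullet, and $\b_\pm$ is the induced Liouville form on $R_\pm$.

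It remains to produce the $\t$-normal form near $\Ga$, which is the \emph{main obstacle}. Here $\Ga$ is a codimension-$2$ contact submanifold with contact form $\l_\Ga = \b_{\rest T\Ga}$; I would choose a defining function $\t$ for $\Ga$ inside $\Si$, transverse along $\Ga$, together with a framing of the two transverse directions $t, \t$ compatible with the symplectic splitting of $\xi$ along $\Ga$. Applying the contact neighbourhood theorem for $\Ga$ (a Moser/interpolation argument in these two directions), one arranges $\l = f(\t)\, dt + g(\t)\, \l_\Ga$ on $\N(\Ga)$, matching $\pm dt + \b_\pm$ on the overlap after reparametrising $\t$. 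The contact condition $\l \wedge (d\l)^n > 0$ then reduces, using $\l_\Ga \wedge (d\l_\Ga)^{n-1} > 0$, to the single inequality $f'g - fg' > 0$, while $\{f = 0\} = \{0\}$ records that $\Ga = \{t = \t = 0\}$ is exactly where $X$ becomes tangent to $\xi$, and $g > 0$ reflects that $\l_\Ga$ stays a positive contact form. The delicate point is to carry out this straightening while keeping $X = \dd_t$ fixed and the two local descriptions compatible on the overlap, so that a single global contact form simultaneously realises all three normalisations.
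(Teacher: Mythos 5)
Your setup is correct and follows the same route as the cited proof in \cite{CGHH10_Sutures}: flowing along $X$ to get $\N(\Si)\simeq(-\e,\e)_t\times\Si$ with $X=\dd_t$, solving $\dd_t\ln h=-\mu$ to make the contact form $t$-invariant, writing it as $u\,dt+\b$ with $t$-independent data, identifying $\Ga_X=\{u=0\}$, and rescaling by $1/|u|$ away from $\Ga$ to obtain the second bullet are all standard and sound. One small omission: the expansion $\l\wedge(d\l)^n=\big(u(d\b)^n+n\,du\wedge\b\wedge(d\b)^{n-1}\big)\wedge dt$ that you use to get $du\neq 0$ along $\Ga$ also yields $\b\wedge(d\b)^{n-1}\neq 0$ along $\Ga$, which is precisely why $\l_\Ga=\b_{\rest T\Ga}$ is a contact form; you assert this but do not extract it from your own computation.

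The genuine gap is the third bullet, which you yourself flag as the main obstacle but then dispatch by invoking ``the contact neighbourhood theorem for $\Ga$.'' That theorem normalises the contact \emph{structure} on a neighbourhood of the codimension-$2$ contact submanifold $\Ga\subset(V,\xi)$ up to contactomorphism; it does not fix the vector field $X=\dd_t$, does not produce a normal form for the contact \emph{form}, and does not match the description $\pm dt+\b_\pm$ you have already pinned down on the overlap. The argument has to be run inside $\Si$, exploiting the $t$-invariance you have already established: with $\l=u\,dt+\b$, use $du\neq 0$ to take $\t$ to be (a function of) $u$ on a collar $(-\e,\e)_\t\times\Ga\subset\Si$, pull $\l_\Ga$ back over that collar, and deform the pair $(u,\b)$ to $(f(\t),g(\t)\l_\Ga)$ by a Moser/Gray-type argument carried out $t$-invariantly and relative to $\Ga$; the relevant contact condition $u(d\b)^n+n\,du\wedge\b\wedge(d\b)^{n-1}>0$ is open, which is what makes the deformation possible on a sufficiently small collar, and $t$-invariance of the generating vector field is what keeps $X=\dd_t$ and the overlap description intact. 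As written, your proposal neither supplies this step nor invokes a tool that would.
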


We also define some additional notion, which restricts which kind of Liouville domain can appear as part of the boundary :
\begin{dfn}
    Consider a convex hypersurface $\Si \subset (V, \xi)$, and an adapted contact $\l$. Then $\Si$ will be called $W$-convex (resp. $S$-convex) for $\l$ if the Liouville domains $(\pm R_\pm, \l)$ are Weinstein (resp. Stein).
\end{dfn}

\subsection{Changing boundary condition : from convex to sutured boundary} \label{ssec.cvx_2_sut}

Let $(V, \xi)$ be a contact manifold containing a convex hypersurface $\Si$, and $\La \subset (V, \xi)$ a Legendrian intersecting $\Si$ transversely.

\begin{lem}\label{bord_leg_reliure} If $\Sigma$ is W-convex for a contact form $\l$, then for a generic perturbation of $\Lambda$ there exists a contact vector field $X$, transverse to $\Sigma$, such that $\Lambda \pitchfork \Sigma \subset \Gamma_X$
\end{lem}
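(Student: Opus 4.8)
\emph{Reduction.} The plan is first to pin down which subsets of $\Si$ can arise as dividing sets, and only then to isotope $\La$ so that $\dd\La=\La\cap\Si$ is carried onto one of them. I would begin by putting $\Si$ in the normal form of \cref{lem.cvx_st_2}: coordinates with contact vector field $\dd_t$, with $\Si=\{t=0\}$, $\Ga=\{t=\t=0\}$, and $\l=\pm dt+\b_\pm$ away from $\Ga$, where $\b_\pm$ is a Liouville form on $R_\pm$ with Liouville vector field $Y_\pm$ and $R_\l=\pm\dd_t$. By \cref{lem.ContactVf} every contact vector field has the shape $X_\varphi=\varphi R_\l+v$, and since $\l(X_\varphi)=\varphi$ its dividing set is $\Ga_{X_\varphi}=\{\varphi_{\rest\Si}=0\}$. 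A short computation in this splitting shows that the $\dd_t$-component of $X_\varphi$ equals $\pm(\varphi+Y_\pm\varphi)$ on $R_\pm$, the $\dd_t\varphi$-terms cancelling because $\b_\pm(Y_\pm)=0$. Hence $X_\varphi\pitchfork\Si$ forces $\varphi+Y_\pm\varphi$ to be nowhere zero, so of constant sign on each component of $R_\pm$; integrating $\tfrac{d}{ds}\big(e^s\,\varphi\circ\phi_{Y_\pm}^s\big)=e^s\,(\varphi+Y_\pm\varphi)\circ\phi_{Y_\pm}^s$ along the Liouville flow, whose backward orbits converge to the skeleton with $\varphi$ bounded, then shows that $\varphi$ cannot vanish in the interior of $R_\pm$. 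Thus the dividing set of \emph{any} transverse contact vector field is confined to $\Ga$: one cannot bend a dividing set out to meet an interior intersection locus, and we are instead forced to move $\dd\La$ onto $\Ga$.

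\emph{Using W-convexity.} This is precisely where the hypothesis enters. I would first take a generic Legendrian perturbation of $\La$ so that $Z=\La\cap\Si$, an $(n-1)$-dimensional submanifold of $\Si^{2n}$, becomes disjoint from the skeleta $\Core(R_+)\cup\Core(R_-)$. Since $\Si$ is W-convex these skeleta are isotropic, hence of dimension at most $n$, and the count $(n-1)+n<2n$ makes disjointness generic; without the Weinstein hypothesis the skeleton could be of codimension $1$ and obstruct this, which is exactly why W-convexity is assumed. That Legendrian perturbations of $\La$ realise generic perturbations of $Z$ inside $\Si$ is a routine transversality argument which I would state but not belabour.

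\emph{Pushing onto $\Ga$.} Finally, since $Z$ avoids the skeleton, the forward Liouville flow on $R_\pm$ carries every point of $Z$ out to $\Ga=\dd R_\pm$ in finite time. I would realise a time-reparametrisation of this flow as a contact isotopy of $V$ that is \emph{tangent} to $\Si$: concretely, the contact vector field $X_\varphi$ with $\varphi=-t$ near $\Si$ has vanishing $\dd_t$-component along $\Si$ and restricts on each $R_\pm$ to $Y_\pm$, both pointing toward $\Ga$. Flowing $\La$ by this isotopy keeps it Legendrian and transverse to $\Si$, and brings $\dd\La$ to lie on $\Ga$; for the resulting Legendrian the original $X=\dd_t$ does the job, since $\Ga_{\dd_t}=\Ga\supseteq\dd\La$. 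The main obstacle is this last step carried out near $\Ga$: in the transition region $\l=f(\t)dt+g(\t)\l_\Ga$ of \cref{lem.cvx_st_2} the $\dd_t$-component computation changes shape, and one must check both that the reparametrised flow lands $\dd\La$ cleanly on $\Ga$ while preserving transversality, and that the finite-time exit estimate together with the contact realisation of the Liouville flow survive there. That is where the genuine work lies.
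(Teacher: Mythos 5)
Your first two steps coincide with the paper's: the dimension count $(n-1)+n<2n$ against the isotropic skeleta (this is exactly where W-convexity enters), and the realisation of the Liouville flow as a contact isotopy tangent to $\Si$. But there are two concrete problems in the last step. First, a sign error: with $\varphi=-t$ the tangential part of $X_\varphi$ along $\Si$ is $\pm Y_\pm$ on $R_\pm$ (your own formula for the $\dd_t$-component, run for the tangential component, gives $w=\pm Y_\pm$ since $d\varphi=-dt=\pm\b_\pm$ on $\xi$), so on $R_-$ your isotopy pushes $\La\cap\Si$ \emph{towards} the skeleton, not towards $\Ga$. This is fixable by taking $\varphi=\pm t$ on the two sides, which is the vector field $Z=t\dd_t+Y$ used in the paper.

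Second, and more seriously, the step you defer as ``where the genuine work lies'' is not a technical verification but the wrong move, and the paper does something different there. A contact vector field tangent to $\Si$ that is to carry $\La\cap\Si$ \emph{onto} $\Ga$ in finite time must be non-vanishing and transverse to $\Ga$ inside $\Si$, so different points of $\La\cap\Si$ reach $\Ga$ at different times and no single time-$1$ map lands all of them on $\Ga$; if instead the field vanishes along $\Ga$ (as it must if one wants $\Ga$ invariant), nothing arrives in finite time. The paper avoids this entirely: it only pushes $\La_\Si=\La\cap\Si$ into a small neighbourhood of $\Ga$, and then perturbs the \emph{contact vector field} rather than the Legendrian, replacing $\psi_0=\l(X)=f(\t)$ by a $C^1$-close function $\tilde\psi$ vanishing on $\phi_Z\La\cap\Si$, so that the new dividing set $\Ga_{\tilde X}=\{\tilde\psi=0\}$ is a small tilt of $\Ga$ passing through $\phi_Z\La\cap\Si$. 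For such a $\tilde\psi$ to exist one needs the additional generic condition, absent from your argument, that the projection of $\phi_Z\La\cap\Si$ to $\Ga$ parallel to $\dd_\t$ be embedded (again a dimension count, $n-1<2n-1$). Note that this also corrects the conclusion of your reduction paragraph: what your integration argument shows is that the dividing set of any transverse contact field is confined to the collar $\N(\Ga)$, not to $\Ga$ itself, and indeed the dividing set ultimately used is a perturbation of $\Ga$, not $\Ga$.
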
 

\begin{proof}
First note that generically, the intersection $\Lambda_\Sigma := \Lambda \cap \Sigma$ is transverse. We then bring $\Lambda_\Sigma$ in a neighbourhood of the suture using the Liouville vector field $Y$ :  $Z = t\dd_t + Y$ is a contact vector field on $\N(\Sigma) \sms \N(\Gamma_X)$, 
which is is extended by zero away from $\Sigma$. Generically $\Lambda_\Sigma$ is a $(n-1)$-submanifold which doesn't intersect the skeleta of $R_\pm$, since they are the union of isotropic submanifolds, of dimension bounded by $n$.

Hence the flow $\phi_Z$ brings $\Lambda_\Sigma$ in a neighbourhood of $\Gamma_X$, and we now need to perturb $X$ such that $\phi_Z \Lambda \cap \Sigma \subset \Gamma_{\tilde{X}}$. We will then have $\Lambda_\Sigma \subset \Gamma_{\phi_Z^*\tilde{X}}$.

For this purpose we use the coordinates obtained in the previous paragraph : on $\N(\Gamma)$, the contact form can be written $\lambda = \tau dt + g(\tau)\l_\Gamma$, the contact vector field is $X=\dd_t$ and the suture is given by $\{t=\t=0\}$. 

We choose a contact vector field $\tilde{X}$, such that $\phi_Z \Lambda \cap \Sigma \subset \Gamma_{\tilde{X}}$, in the following way : by \cref{lem.ContactVf}, $\tilde{X}$ is determined by the function $\l(\tilde{X})$. Note that for $X$ we get $\psi_0 := \l(X) = 
f(\t)$. Pick $\tilde{\psi} : \N(\Gamma) \ra \RR$ such that $\tilde{\psi}(\phi_Z \Lambda \cap \Sigma) = 0$, so we obtain $\tilde{X}= \psi \R + v$, where $v\ in \xi$ is such that $\i_v\ d\l_{\rest \xi} = d\tilde{\psi}_{\rest \xi}$.

Moreover $\phi_Z \Lambda \cap \Sigma$ can be taken arbitrarily close to $\Gamma_X$, and after a generic perturbation of $\La$ the projection of $\phi_Z \Lambda \cap \Sigma$ to $\Gamma$ (parallely to $\dd_\t$) is embedded (it is achievable by dimensional reasons, since $\dim (\La \cap \Sigma) = n-1$ and $\dim(\Ga) = 2n -1$), hence $\tilde{\psi}$ can be chosen arbitrarily $\C^1$-close to $\psi$. Finaly $\tilde{X}$ is transverse to $\Sigma$ when the difference between $\psi_0$ and $\tilde{\psi}$ is small enough, and the suture is then $\{\psi = t = 0\}$ because $\l(\tilde{X}) = \tilde{\psi}$.
\end{proof}

\begin{rmk} We could also conserve the Legendrian and perturb $\lambda$ (but not the contact structure) such that $\dd \La$ doesn't intersect the skeleta of $(R_\pm, \l_{\rest R_\pm})$.
\end{rmk}

Here we normalise the contact structure on a neighbourhood of the suture. Loosely speaking we will show that it looks like the neighbourhood of the binding of an open book decomposition.

We start by constructing a contact vector field tangent to $\Lambda$ :
\begin{lem} Let $\Sigma \subset (V,\xi)$ be a convex hypersurface and $\Lambda \subset V$ a Legendrian intersecting $\dd V$ transversely. If $X$ is a contact vector field transverse to $\Sigma$ such that $\Lambda_\Sigma = \Lambda \cap \Sigma \subset \Gamma_X$, then there exists a contact vector field $\tilde{X}$ such that :
\begin{itemize}
\item $\tilde{X}$ is transverse à $\Sigma$.
\item $\La_\Sigma = \Sigma \cap \Lambda \subset \Gamma_{\tilde{X}}$
\item $\tilde{X}$ is tangent to $\Lambda$ in a neighbourhood of $\Sigma \cap \Lambda$.
\end{itemize}
\end{lem}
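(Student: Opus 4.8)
\emph{Proof sketch.} The plan is to translate everything into the language of contact Hamiltonians via \cref{lem.ContactVf} and to exploit a clean criterion for tangency to a Legendrian. Fix an adapted contact form $\l$, write the given field as $X = X_\vphi$ with $\vphi = \l(X)$, and recall that $X_\psi = \psi R_\l + v_\psi$, where $v_\psi \in \xi$ satisfies $(\i_{v_\psi} d\l)_{\rest \xi} = d\psi_{\rest \xi}$. I first claim that, for an open set $U \subset \La$, the field $X_\psi$ is tangent to $\La$ at every point of $U$ if and only if $\psi_{\rest U} = 0$. Indeed, if $X_\psi(p) \in T_p\La \subset \xi_p$ then $\psi(p) = \l(X_\psi)(p) = 0$, whence $X_\psi(p) = v_\psi(p)$; since $T_p\La$ is Lagrangian in $(\xi_p, d\l)$ and $d\l(v_\psi(p), w) = d\psi(w)$ for $w \in T_p\La$, tangency at $p$ is equivalent to $\psi(p)=0$ together with $d(\psi_{\rest\La})_p = 0$. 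If $\psi_{\rest U} \equiv 0$ both conditions hold throughout $U$; conversely tangency on $U$ forces $\psi_{\rest U} = 0$. Note this criterion already yields $\La_\Si \subset \{\psi = 0\} \cap \Si = \Ga_{X_\psi}$ for free, so the real issue is to keep transversality to $\Si$ while imposing $\psi_{\rest\La}=0$.

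Next I build the field near $\La_\Si$. Because $\La \pitchfork \Si$, I may choose a vector field $W$ on $\La$, defined near $\La_\Si$, that is transverse to $\Si$ and crosses it in the same direction as $X$. I then realise $W$ as the restriction to $\La$ of $X_{\vphi_0}$ for a suitable $\vphi_0$ with $\vphi_0{}_{\rest\La} = 0$: under this vanishing one has $X_{\vphi_0}{}_{\rest\La} = v_{\vphi_0}{}_{\rest\La}$, and $v_{\vphi_0}$ along $\La$ is prescribed by the values of $d\vphi_0$ on a complement of $T\La$ inside $\xi$ (the compatibility $d(\vphi_0{}_{\rest\La})=0$ being automatic), which is a free jet-extension datum. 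By the criterion above $X_{\vphi_0}$ is tangent to $\La$ near $\La_\Si$, its restriction to $\La$ equals $W$, hence it is transverse to $\Si$ along $\La_\Si$ and, crossing $\Si$ as $X$ does, positively so; moreover $\La_\Si \subset \Ga_{X_{\vphi_0}}$.

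Finally I globalise by interpolation. Transversality being an open condition, $X_{\vphi_0}$ crosses $\Si$ in the direction of $X$ on a whole neighbourhood of $\La_\Si$. Using the interpolation-in-space property recorded after \cref{lem.ContactVf}, I glue $\vphi_0$ (near $\La_\Si$) to $\vphi$ (away from it): on the transition region the Hamiltonian is a convex combination $(1-s)\vphi_0 + s\vphi$, and since $X_{(1-s)\vphi_0+s\vphi} = (1-s)X_{\vphi_0} + sX$ with both $X_{\vphi_0}$ and $X$ crossing $\Si$ the same way, every such combination is again transverse to $\Si$. The glued field $\tilde X$ is therefore transverse to $\Si$ everywhere, equals $X_{\vphi_0}$ near $\La_\Si$ — so it is tangent to $\La$ there and satisfies $\La_\Si \subset \Ga_{\tilde X}$ — and equals $X$ away from $\La_\Si$, as required.

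The main obstacle is the apparent conflict between the two demands: tangency to $\La$ forces the contact Hamiltonian to vanish along $\La$, a change of $\vphi$ that is large (not $C^1$-small), so transversality to $\Si$ cannot be salvaged by mere $C^0$-closeness of the fields. The point that unlocks the argument is that only the co-orientation of the crossing matters: since $\La \pitchfork \Si$ we may take $W$ crossing $\Si$ the same way as $X$, and the set of contact fields crossing $\Si$ in a fixed direction is convex, so the interpolation preserves transversality despite the size of the modification.
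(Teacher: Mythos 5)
Your first two steps are sound, and the route through contact Hamiltonians is a genuinely different (and a priori attractive) strategy: the tangency criterion --- $X_\psi$ is tangent to $\La$ along an open subset of $\La$ if and only if $\psi$ vanishes there, because $T\La$ is Lagrangian in $(\xi, d\l)$ --- is correct, and so is the construction of $\vphi_0$ vanishing on $\La$ with prescribed derivative on a complement of $T\La$ in $\xi$ so that $X_{\vphi_0}{}_{\rest \La} = W$. The gap is in the gluing step. The correspondence $\psi \mt X_\psi$ of \cref{lem.ContactVf} is $\RR$-linear but not linear over functions: for a cut-off $s$ one has
\[ d\big((1-s)\vphi_0 + s\vphi\big) = (1-s)\, d\vphi_0 + s\, d\vphi + (\vphi - \vphi_0)\, ds, \]
hence
\[ X_{(1-s)\vphi_0 + s\vphi} = (1-s)X_{\vphi_0} + sX + (\vphi - \vphi_0)\, v_s, \]
where $v_s \in \xi$ is the $d\l$-dual of $ds_{\rest \xi}$. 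Your identity $X_{(1-s)\vphi_0 + s\vphi} = (1-s)X_{\vphi_0} + sX$ holds only for constant $s$, and the convexity argument controls only the first two terms. The error term is not negligible: on a transition annulus of width $\e$ around $\La_\Si$ one has $|\vphi - \vphi_0| = O(\e)$ (both functions vanish on $\La_\Si$, but their difference vanishes only to first order there unless $X$ is already tangent to $\La$), while $|ds| = O(1/\e)$, so $(\vphi - \vphi_0)v_s$ is of order one and carries no sign information in the direction transverse to $\Si$. As you observe yourself, the modification of the Hamiltonian is large rather than $C^1$-small, so this term cannot be discarded by a smallness argument either; the proof does not close as written.

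For comparison, the paper avoids the issue by never interpolating Hamiltonians: it pushes $\La_\Si$ by the flow of $X$ to obtain an $X$-invariant Legendrian $\La_0 = \bigcup_{|s|<\e} \phi_X^s(\La_\Si)$, realises $\La_0$ as the $1$-jet of a function $\phi$ on $\La$ (extended by zero) inside a standard neighbourhood $J^1(\La)$, and uses the contact vector field $\phi(q)\dd_z + \dd_{q_i}\phi\,\dd_{p_i}$ to produce a contactomorphism carrying $\La$ to $\La_0$ which is $C^1$-close to the identity for $\e$ small. The pull-back $\tilde X$ of $X$ is then automatically contact and tangent to $\La$ near $\La_\Si$, and transversality to $\Si$ survives because $\tilde X$ is $C^0$-close to $X$. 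To rescue your argument you would need an interpolating Hamiltonian agreeing with $\vphi$ to first order along $\La_\Si$ while vanishing on $\La$, which amounts to first correcting $X$ along $\La_\Si$ --- essentially the paper's step --- so the detour through a contactomorphism seems hard to avoid.
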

The last condition implies that 
$\Lambda_\Sigma$ is a Legendrian submanifold of $(\Gamma_X, \xi \cap T\Gamma)$. Moreover, $\Lambda$ is invariant under the flow of $\tilde X$ and $T\Lambda = <T\Lambda_\Sigma,X>$ on $\N (\Lambda_\Sigma)$, so we can choose coordinates such that $\Lambda = \{t, \tau = 0, \dd \Lambda \subset \Gamma\}$ in a neighbourhood of $\Sigma$.

\begin{proof}
We use the coordinates $(t, \t)$ given by \cref{lem.cvx_st_2}, where $X = \dd_t$, and such that $\xi = \ker( f(\t) dt + g(t) \l_\Ga)$.  Let $\La_0$ be the Legendrian obtained by pushing $\Lambda_\Sigma$ by the flow of $X$ : $\Lambda_0 = \under{\bigcup}{-\e < s < \e} \phi_X^s(\Lambda_\Sigma) = [-\e, \e]_t \times \{0\} \times \La_\Ga$. It is still a Legendrian since $\lambda_{\rest T\Lambda_\Sigma} =0$ and $\l(\dd_t)=0$ on $\{\t =0\}$. 

By \cref{Vois_st_leg}, there exists a neighbourhood $\N(\Lambda)$ contactomorphic to a neighbourhood of $0_\Lambda \subset (J^1(\Lambda), dz - p.dq)$,
and for $\e > 0$ small enough, $\Lambda_0$ stays in $\N(\Lambda_\Sigma) \subset \N(\Lambda)$. It is represented by the graph of a function $\phi : \Lambda \ra \RR$, defined on a neighbourhood of $\Lambda_\Sigma \subset \Lambda$. We extend this function to $\Lambda$ by zero, and its graph will still be denoted $\Lambda_0$. The two Legendrians coïncide on $\Sigma$ (since $\Lambda_0 \cap \Sigma=\Lambda_\Sigma$), hence $\phi_{\rest \Lambda_\Sigma}=0=d_{\Lambda_\Sigma}\phi$. 

In the coordinates $(z;q,p) \in \RR \times T^*\Lambda = J^1(\Lambda)$, we define a vector field 
\[Z(z;q,p) = \phi(q) \dd_z + \dd_{q_i}f. \dd_{p_i}, \]
which is contact :
\[ \L_Z(dz-p.dq) = d(f(q)) - \i_Z(dp \wedge dq) = d_qf - \dd_{q_i}f .dq_i = 0,   \] 
extended by zero far from $\N(\Lambda)$. 
Moreover its flow (at times $1$) maps $\Lambda \simeq 0_\Lambda$ to $\Lambda_0$, hence it induces a contactomorphism $(V,\xi,\Lambda) \ra (V, \xi, \Lambda')$ preserving $\Lambda_\Sigma$ (but not $\Sigma$). The pullback of $X$ is contact, tangent to $\Lambda$ on a neighbourhood of $\Sigma$, and stays transverse for $\e$ small enough : indeed $X$ is given by a function $V\ra \RR$ (we need to choose a contact form), and we can make the contactomorphism $\C^1$-close to the identity (on $\Lambda_\Sigma$ it is $C^1$-equal to the identity). The vector field $\tilde{X}$ being given by the pullback of this function, it is $C^0$-close of $X$.

\end{proof}

Adapting \cite[Lemme 4.1]{CGHH10_Sutures} to the Legendrian setting, we obtain the following result.

\begin{lem} \label{lem.cvx->sut} Let $(V, \xi)$ be a contact manifold with convex boundary $\Sigma$, $X$ a contact vector field transverse to $\Sigma$, and $\Lambda \subset (V, \xi)$ a Legendrian such that $\Lambda \pitchfork \Sigma \subset \Gamma_X$. Then there exists a contact form $\l$, as well as a neighbourhood of the suture $\N(\Gamma)$, such that $(M \setminus N(\Gamma), \lambda)$ is a sutured contact manifold and $\Lambda$ is a cylindrical contact manifold.
\end{lem}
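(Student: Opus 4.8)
The plan is to carry out the corner-rounding of \cite[Lemme 4.1]{CGHH10_Sutures} in coordinates in which $\La$ is already in product position, so that the same modification which turns the convex boundary into a sutured one automatically makes $\La$ cylindrical. First I would fix the normal form supplied by the two preceding lemmas: by \cref{lem.cvx_st_2} there are an adapted contact form $\l$ and functions $t,\t$ with $X=\dd_t$, $\Si=\{t=0\}$ and $\Ga=\{t=\t=0\}$, such that on $\N(\Ga)$ one has $\l=f(\t)\,dt+g(\t)\,\l_\Ga$ with $\{f=0\}=\{0\}$, $g>0$ and $f'g-fg'>0$, while away from $\N(\Ga)$ one has $\l=\pm dt+\b_\pm$. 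By the previous lemma I may assume, after a contactomorphism $C^1$-close to the identity, that $X=\dd_t$ is tangent to $\La$ near $\La_\Si$, so that in these coordinates $\La=\{\t=0\}$ is swept by the $\dd_t$-flow of $\dd\La$, with $\dd\La\subset\Ga$ Legendrian for $\l_\Ga$ and $V$ lying in $\{t\le 0\}$.

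Next comes the reshaping near the suture, which is the content I would import from \cite{CGHH10_Sutures}. Reading $\t\mapsto(f(\t),g(\t))$ as a curve in the plane, the condition $f'g-fg'>0$ says that it turns positively around the origin; deforming it (relative to the region where it already gives the symplectisation form) so that it first runs along the $g$-axis and then along the two lines $f=\pm C$, and cutting $V$ along the locus this carves out, removes a neighbourhood $\N(\Ga)$ of the suture and creates the corners. One checks, exactly as in CGHH, that the new boundary splits as $R_+\cup[-1,1]\times\Ga\cup R_-$ with the corners at the gluing loci, that $\l=C\,dt+e^{\t}\l_\Ga$ on the resulting product neighbourhood $\N_0$ (after reparametrising $\t$ by $\ln g$), and that the Reeb transversality of the adapted form makes $(\pm R_\pm,\l)$ Liouville domains; hence $(V\sms\N(\Ga),\l)$ is a sutured contact manifold. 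Throughout the deformation I keep the suture at $\t=0$, i.e. $f_s(0)=0$.

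The genuinely new point is that $\La$ is carried to a cylindrical Legendrian by the same operation. Since the deformation alters only the $(f,g)$-factor and leaves $\l_\Ga$ and the splitting of $\ker\l$ untouched, and since $\La=\{\t=0\}$ with $\dd\La$ Legendrian for $\l_\Ga$, the restriction $\l_{s\,\rest T\La}=f_s(0)\,dt_{\rest T\La}+g_s(0)\,\l_{\Ga\,\rest T\dd\La}=0$ vanishes for every $s$, so $\La$ stays Legendrian throughout. The cut that opens the corner identifies the inward $\dd_t$-direction, along which $\La$ is swept, with the new horizontal coordinate $\t$ of $\N_0$, and carries the former dividing direction to the new $t$; under this identification $\La\cap\N_0=(-\e,0]_\t\times\{t=0\}\times\dd\La$, which is precisely centered and cylindrical, with $\dd\La\subset\Ga$ Legendrian.

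The main obstacle is exactly this last matching: one must choose the deformation of $(f,g)$ and the cutting locus so that $\dd\La$ lands on the new suture $\{t=\t=0\}$ and $\La$ acquires the product form, not merely so that $\La$ remains Legendrian. This is possible precisely because the previous lemma arranged $X=\dd_t$ to be tangent to $\La$ near $\La_\Si$: the cutting locus can then be taken $\dd_t$-invariant there, which forces $\La$ into the desired product shape. Everything else is the bookkeeping of verifying the sutured axioms, which is routine once the normal form is in place.
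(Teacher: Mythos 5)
Your proposal follows essentially the same route as the paper: put $\La$ in product position via the two preceding normal-form lemmas, import the convex-to-sutured modification of \cite[Lemme 4.1]{CGHH10_Sutures} near the suture, and observe that the quarter-turn exchanging the $t$- and $\t$-directions along the ray $\{\t=0\}\times\dd\La$ turns the $\dd_t$-swept Legendrian into a centered cylindrical one. The one step you gloss over is the only one the paper actually has to work for. Your deformation of the pair $(f,g)$ is \emph{not} a conformal rescaling, so it changes the contact structure; the lemma asks for a contact form for the \emph{original} $\xi$, so one must pull the deformed structure back by a Gray diffeomorphism, and it is not automatic that this diffeomorphism preserves $\La$. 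Your observation that $f_s(0)=0$ keeps $\La$ Legendrian for every intermediate form is the right input, but by itself it only says that $\La$ is Legendrian for each $\ker\l_s$; to conclude you must either invoke the relative Gray theorem (\cref{prop.Moser_Leg}) with the constant path of Legendrians, or do what the paper does: compute the Moser vector field of the interpolation $\l_s=\l_\Ga+(1-s)\t\,dt+s r^2 d\theta$ explicitly and check that along $\{\t=0\}$ it equals $\frac{-t^2}{1+s}\dd_t$, hence is tangent to $\La$ (this is where the tangency $X=\dd_t$ to $\La$ from the previous lemma is used) and vanishes to second order at $\Ga$, so that $\dd\La$ stays on the suture and the product coordinates near $\Si$ survive the normalisation. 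After that, the paper's final modification is purely conformal ($h_0\leadsto h$ with $h=C_0/\tilde r^2$ near the cut), so it affects neither $\xi$ nor the Legendrian condition; your $(f,g)$-curve picture, read literally as forms $f(\t)dt+g(\t)\l_\Ga$, cannot express the final normal form in the original coordinates, which is why the polar-coordinate change is not just bookkeeping. None of this breaks your argument, but the Gray step and its compatibility with $\La$ should be made explicit rather than absorbed into ``one checks, exactly as in CGHH.''
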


\begin{proof}
We only slightly modify the proof of \cite{CGHH10_Sutures} to follow the Legendrian : we will show that there exists a neighbourhood of the suture $\N(\Gamma_{\tilde{X}})$, a $\xi$-form $\lambda$, as well as coordinates $(r, \theta)$ defined on $\N(\Gamma)$, such that \begin{itemize}
\item $\Sigma = \{\theta= 0, \pi\}$, $\Lambda_\Sigma \subset \Gamma_{\tilde{X}}$, $\Lambda \subset \{\theta = \pi/2\}$, and $\dd_\theta = \pm \tilde{X}$ for $\theta \simeq 0, \pi$ and $r$ big enough.
\item $\lambda = h_0(r, \theta , x) .(\lambda_0 + r^2 d\theta)$.
\end{itemize}

\begin{enumerate}
\item[(i)]
By \cref{lem.cvx_st_2} we have on a neighbourhood of the suture
 \[ \lambda =  g(\tau)(\tilde{f}(\t)dt + \lambda_0) \]
where  $\tilde{f}(\t) = \t$ if $|\t| \leq 1/4$, $g>0$  
 and $g(\t) = g(-\t)$.

Taking polar coordinates $(r, \theta)$ such that 
\[ (\t, t) = (r \cos \theta, r \sin \theta) \text{  and  } \N(\Gamma)=\{ \pi \leq \theta \leq 2\pi, 0\leq r \leq \d \}  \]
we define
\[ \lambda_s = \l_\Gamma + (1-s) \tau dt + s r^2 d\theta\]
Those forms and their differentals being constants on $\Sigma$ (near the suture), they are contact on a neighbourhood of the suture. 

By Gray's theorem (see \cref{prop.Moser}) there exists a family of (local) diffeomorphisms $\phi_s$  such that $\phi_0 = id$ and $\phi_s^* (\ker \lambda_s) = \ker \xi_0$. In other words, there exists coordinates $(\tilde{r}, \tilde{\theta})$  and a function $h_0 : \N(\Gamma) \ra \RR$  such that 
\[\lambda = h_0 (\l_\Gamma + \tilde{r}^2 d\tilde{\theta})\]

\item[(ii)]
On $\Sigma$ we have $(\tilde{r}, \tilde{\theta}) = (r, \theta)$, since $\ker \lambda_s$ is constant here. Moreover, after this change of coordinates, 
$\dd_{\tilde{r}}$ stays tangent to the Legendrian.

Indeed by detailling Moser's trick, $\phi_s$ is the flow of a vector field $X_s$ such that 
  \[ X_s \in \xi_s \text{  and  } \dd_s \l_s + \i_{X_s} d\l_s = \nu_s \lambda_s \]
Evaluating at $R_{\l_s} = R_\Gamma$ (the Reeb vector field of $\l_\Gamma$) we get
  \[\nu_s = \dd_s \lambda_s(R_\Gamma) = (-\t dt + r^2 d\theta)(R_\Gamma) = 0 \]
hence $X_s$ is determined by
  \[ \i_{X_s} d\l_s = -\dd_s \l_s = -\t dt + r^2 d\theta\]
On $\{\t= 0\}$, we have $\xi_s = <\ker \b_0, \dd_t, s\t R_0 - \dd_\t>$ and $X_s$ is given by 
\[i_{X_s}( d\b_0 + (1+s) d\t \wedge dt ) = t^2 d\t. \]
Hence $X_s = \frac{-t^2}{1+s} \dd_t$.

\item[(iii)]
We now choose $h: U \ra \RR$  such that \begin{itemize}
\item $h=h_0$ on $\dd U \cap \{\tilde{r}=\d\}$
\item $\dd_{\tilde{r}} h <0$
\item $h = C_0 /\tilde{r}^2$ for $\e /2 \leq \tilde{r} \leq \e \leq \d$
\end{itemize}

Finally, we define a contact form $\xi$-adapted by 
\begin{align*}
\tilde{\l} & = \l \text{  on  } V \setminus U \\
 & = h (\b_0 + \tilde{r}^2 d\tilde{\theta})  \text{  on } U.
\end{align*}
Then $V \setminus \{ \tilde{r} < \e/2 \} $ is a sutured contact manifold,  with suture $\Gamma = \{\tilde{r}=\e/2, \tilde{\theta} = 3\pi/2\}$ and $\N_0(\Gamma) = V \cap \{ \e/2 \leq \tilde{r} \leq \e\}$, where the Reeb vector field is $\frac{1}{C_0} \dd_{\tilde{\theta}}$), and $\Lambda$ is a cylindrical Legendrian. 
\end{enumerate}
\end{proof}

More generally, we proved that if $\Sigma$ is a convex hypersurface splitting $V$ into $V_1$ and $V_2$, there exists a contact form and a neighbourhood $N$ of $\Gamma$  such that :  \begin{itemize}
\item $V_i\setminus N, i \in \{1, 2\}$, are sutured contact manifolds ;
\item $V \setminus N$ is {\it circularly sutured}, ie it is the $S^1$-contactisation of $(-\e, 0]\times \Ga$ near the boundary ;
\item $V_i \cup N, i \in \{1, 2\}$, are {\it negatively sutured} contact manifolds (called concave in \cite{CGHH10_Sutures}).
\end{itemize}

\subsection{Construction of sutured manifolds} \label{ssec.ExVarSut}

We now construct the principal examples of sutured contact manifolds and sutured Legendrians that will be used in the rest of this paper.

\paragraph{Contactisation.}
The simplest example of a sutured contact manifold is the contactisation of a Liouville domain $(W, \b)$ : 
\[(V, \l) = (I \times W, C.dt + \b),\]
which trivially satisfies the requirements, with suture $\{0\} \times \dd W$. 

By \cref{dfn.LagrExact}, sutured Legendrians in $V$ correspond to exact immersed Lagrangians in $W$ (one need to take $C$ big enough for the lift to exist in $V$).

\paragraph{Complement of a Legendrian.} Given a Legendrian $\La_0 \subset (V, \xi)$, it was a standard neighbourhood, whose boundary is a convex hypersurface. Applying the convex-sutured procedure of \cref{ssec.cvx_2_sut}, we get a sutured manifold $\check V$, of horizontal boundaries $R_\pm \simeq D^*\La$, which yields invariants of $\La$ (this was already observed in \cite{CGHH10_Sutures}). This construction can also be seen as a stop, see \cite[§2.4]{Dat22_Conormal_stops_Hyperbolic_knots} for a more detailed discussion. 
\begin{rmk}Given a sutured Legendrian $\check \La \subset \check V$, a Lagrangian filling $L \subset D^*\La$ of boundary $\La_\Ga$ induces a Legendrian $\La \subset V$, potentially intersecting $\La_0$.
\end{rmk}

\paragraph{Neighbourhood of a convex hypersurface.} The following construction appeared in \cite{Vau12_Bypass} for the $3$-dimensional case. Consider two Liouville domains $(W_\pm, \b_\pm)$, of same boundary $(\Gamma, \l_\Ga)$. We can construct a manifold with convex boundary 
\[([-1,1]_u \times (W_+ \cup_\Gamma W_-), f du + \b)\] 
where $f : W_+ \cup_\Gamma W_- \ra \RR$ vanishes on $\Ga$, and is strictly negative (resp. positive) on $W_-$ (resp. $W_+$), and $\b=\b_\pm$ on $W_\pm$ (it needs to be smoothed). The contact vector field $\dd_u$ is transverse to the boundary, of dividing set $\{\pm 1\} \times \Ga$.

To make it sutured, we can apply \cref{lem.cvx->sut} explicitly : denote $r$ the coordinate on $V$ given by the Liouville vector field on a neighbourhood of $\dd W_\pm$, such that $W_\pm = \{\pm r >0\}$. We define $\l=f du + \b$ in the following way :
\begin{itemize}
\item away from $I \times \N(\Gamma)$, take $f=\pm 1$ and $\b = \b_\pm$  on $W_\pm$ ; 
\item on $I \times \N(\Gamma)$, take $f= f(r,u)$ and $\b = g(r,u) \l_\Ga$.
\end{itemize}
The contact condition is $g^{n-1}(g f_r - f_r g) >0$, and the Reeb vector field is 
\begin{align*}
    R & = \pm \dd_u & \text{ away from } I \times \N(\Gamma)  \\
      & \sim X_g + \dd_r f R_\Gamma  & \text{ on } I \times \N(\Ga), \qquad\quad 
\end{align*}
where $X_g$ is defined by $\i_{X_g} (dr \wedge du) = dg$

Choose a function $g$ with maximums at $(\pm 1, 0)$ (of same value), and a saddle point at $(0,0)$. Finally remove a neighbourhood of the dividing set : $V  = \{g \leq g(\pm 1, 0) - \e\}$ is a sutured manifold, and $\l$ is now adapted, with its Reeb vector field pictured on the right side of \cref{fig.cyl_cvx} (projected on the plane $(r,u)$).

\begin{figure}[h!]
  \center
  \def\svgwidth{14cm} 
  \import{./Dessins/}{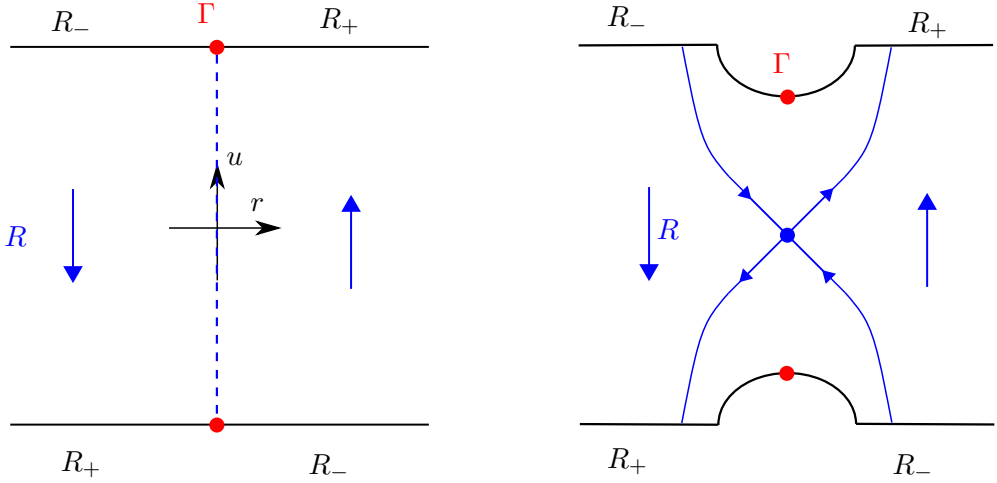}
  \caption{The Reeb vector field on the neighbourhood of a convex hypersurface. On the left the boundary is convex (the functions $f$ and $g$ only depend on $r$), and on the right the contact form is adapted to the sutured manifold.}
  \label{fig.cyl_cvx}
\end{figure}

Given a Legendrian $\La_0 \subset \Ga$, we can thicken it to obtain a sutured Legendrian $\La = [-1,1] \times \La_0$. More generally, any exact Lagrangian cobordism in the symplectisation of $\Ga$ induces a sutured Legendrian in $V$.

\paragraph{Conormal of a manifold with boundary.}  We recall a construction from \cite{Dat22_Conormal_stops_Hyperbolic_knots}.
Consider a smooth manifold $M$, with boundary, and choose coordinates on a neighbourhood of $\dd M$ :
$\N(\dd M)  \simeq (-\e, 0]_u \times \dd M$
as well as a metric $g_\dd$ on $\dd M$, and set $g = du^2 + g_\dd$, extended arbitrarily to $M$.
Then the unit tangent bundle 
\[ V = S_g TM = \{(x, v) \in TM, g_x(v) = 1\}\]
comes endowed with a contact form $(\l_g)_{(x, v)} = g_x(v, dv)$, and the vector field $\dd_u$ (on $M$) preserve the metric, hence it lifts to a contact vector field on $S_g TM$, which is transverse to the boundary. Consequently $V$ has convex boundary, and we have coordinates 
 \[ U_g \N(\dd M) = \{ (u, y; \nu, w) \in (-\e, 0]_u \times \dd M \times \RR \times T_y \dd M |\ \nu^2 + g_\dd(w, w) = 1     \}.\]
in which the boundary $\dd V$ splits into 
$R_\pm  = \{u = 0, \pm \nu >0 \} \simeq D^* (\dd M)$ (as Liouville domains), glued along the dividing set $\Ga = \{u = 0, \nu = 0\} \simeq S_{g_\dd} T(\dd M)$ (as contact manifolds).

\begin{rmk}
This construction has a counterpart in the unit cotangent bundle $ST^* M$, whose contact structure is canonically defined. Indeed, any vector field on $M$ lifts to a contact vector field on $ST^* M$, and if it is transverse to $\dd M$ its lift is transverse to $\dd V$. 
\end{rmk}

To get a contact form adapted to the sutured contact manifold, we can take the form induced by the metric $f(u).(du^2 + g_\dd)$, where $g_\dd$ is a metric on $\dd M$ and $f$ is a strictly increasing function (note that the metric $du^2+f(u).g_\dd$ also works, where $f$ is still increasing). Indeed, one can check that the Reeb vector has the behaviour depicted on \cref{fig.cyl_cvx}.

Moreover, for any compact submanifold $N \subset M$ with boundary in $\dd M$, intersecting transversely the boundary, we can choose the coordinates on $\N (\dd M)$ so that $N = (\e, 0] \times \dd N$ on that neighbourhood. Hence its normal bundle
\[U_N M = \{(x, v), x \in N, v \perp  T_x N\} \subset  (ST^* M, \xi_\st)  \]
is Legendrian, of boundary $U_{\dd N} (\dd M)$.

On the cotangent side, it means that we can find a contact vector field on $ST^* M$, such that the boundary of the unit conormal 
\[ST^*_N M = \{(x, \a) \in ST^*M, x\in N, \a=0 \text{ on } T_x N\}\]
is included in the dividing set. To summarise the relevant (sub)manifolds, we have:
\begin{align*}
    & V = ST^*M \qquad \dd V = D^* (\dd M) \under\cup\Ga D^* (\dd M) \qquad \Ga = ST^*(\dd M) \\
  & \qquad \qquad \qquad \La = ST^*_N M \qquad \dd \La = ST^*_{\dd N} \dd M.
\end{align*}

\section{Completions and maximum principles} \label{sec.Max_princ}

\subsection{Completions} \label{ssec.Completions}

We first present the construction of \cite[§2.4]{CGHH10_Sutures}, which we extend to the Legendrian situation using ideas coming from Floer theory.

Let $(V, \Gamma, \N_0(\Gamma), \lambda)$ be a sutured contact manifold. We complete it into a non-compact  contact manifold $(V^*, \lambda^*, \Lambda^*)$ in the following way : 
\begin{itemize}
\item On $\N(R_\pm)$, we have $\lambda = C dt + \beta_\pm$ with $t\in [-1, -1+\e) \cup (1-\e, 1]$. We extend the manifold "vertically" by gluing $([1, \infty)_t \times R_+, C dt + \b_+)$ and $((-\infty, -1]_t \times R_-, C dt + \b_-)$. Thus we get a contact manifold with boundary $\RR \times \Gamma$.
\item On a neighbourhood $(-1, 0] \times \RR \times \Gamma$ of this new boundary, with the coordinate $\tau$ extended by translation, we have $\lambda = C dt + e^\tau \lambda_0$. We now complete "horizontally" by gluing $([0, \infty)_\t \times \RR_t \times \Gamma, C dt + e^\tau \lambda_0$).
\end{itemize}
To summarize, we have 
\[ V^* = V \cup \Cont_\pm(R_\pm) \cup \Cont(\Symp_+( \Ga)).  \]

Given a sutured Legendrian $\La \subset (V, \l)$, there exists several of extending it, inspired by the symplectic setting. 
Remember that given an exact Lagrangian $L$ in a Liouville domain $(W, \b)$, the cylindrical completion is
\[ \hat L = L \cup \RR^+ \times \dd L \subset (\hat W, \hat \b).  \]
Moreover for an Hamiltonian $H_u : \RR^+ \times \dd W \ra \RR, u \in [0, 1]$, vanishing at $\{0\} \times \dd W$, we can define the completion relative to $H$ : 
\[L^H  = \phi^1_{X_H} (\hat L),\]
where $X_H$ is determined by $\i_{X_H} d\hat \b = - dH$.

\paragraph{Cylindrical Legendrian completion.} 

On $\N_0(\Gamma)$, we have $\Lambda = (-1, 0] \times \{0\} \times \dd \Lambda \subset (-1, 0] \times \RR \times \Gamma$.
We extend the Legendrian cylindrically by 
\[\Lambda^* = \Lambda \cup [0, \infty) \times \{0\} \times \dd\Lambda, \]
see \cref{fig.Cylindrical_completion} for a picture.

\begin{figure}[h!]
\center
\def\svgwidth{8cm}
\import{./Dessins/}{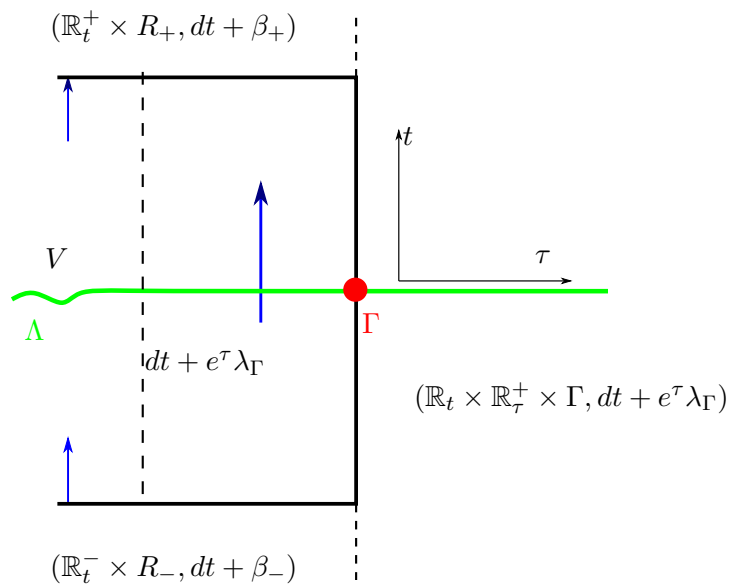}
  \caption{Cylindrical completion of a Legendrian.}
  \label{fig.Cylindrical_completion}
\end{figure}

\begin{ex}
If $(W, \beta)$ is a Liouville domain, the completion of $(W \times I, dt + \b)$ is the contactisation of its completion $\hat{W}$ : 
\[(W^*, \lambda^*) = (\RR \times \hat{W}, dt + \hat{\b}).\]
Moreover the cylindrical completion of a sutured Legendrian projects to the cylindrical completion of an exact immersed Lagrangian.
\end{ex}

\paragraph{Wrapped Legendrian completion.} 

Consider $H : \RR_+ \ra \RR$ a function vanishing near $0$. We define the completion wrapped by $H$ in the following way : 
\[\Lambda^H  =\La \cup  \{(\tau,- f_H(\tau), \phi_{R_0}^{g_H(\tau)}(x)), \tau \geq 0, x \in \dd \Lambda\} \]  where  $f_H = \frac{1}{C} \int_0^\tau e^\tau H'' .d\tau$    and   $g_H = H'$, see \cref{fig.completion}.

Note that the projection of $\Lambda_H$ to $\RR_+ \times \Gamma$ is the cylindrical Lagrangian $\RR_+ \times \Lambda_\Sigma$ (in the symplectisation of $(\Ga, \l_\Ga)$) 
wrapped by $H$ : 
\[\pi (\Lambda^H) = \phi_{X_H}^1 (\RR_+ \times \dd \Lambda),\]   
where $X_H$ is determined by $\i_{X_H} d\hat \b = - dH$, because $\phi_{X_H}^1 = \phi_{R_0}^{H'(\tau)}$. 
Alternatively, we can see this Legendrian as a lift of the Lagrangian wrapped by $H$, and from that point of view $f_H$ is the primitive of $(e^\t \l_\Ga)_{\rest L^H}$.

\begin{dfn} 
The completion will be called {\it positive} if $H'$ is increasing (ie if $H'' \geq 0$), and {\it total} if $H' \ra \infty$. 
\end{dfn}

\begin{figure}[h!]
\center
\def\svgwidth{12cm}
\begingroup%
  \makeatletter%
  \providecommand\color[2][]{%
    \errmessage{(Inkscape) Color is used for the text in Inkscape, but the package 'color.sty' is not loaded}%
    \renewcommand\color[2][]{}%
  }%
  \providecommand\transparent[1]{%
    \errmessage{(Inkscape) Transparency is used (non-zero) for the text in Inkscape, but the package 'transparent.sty' is not loaded}%
    \renewcommand\transparent[1]{}%
  }%
  \providecommand\rotatebox[2]{#2}%
  \newcommand*\fsize{\dimexpr\f@size pt\relax}%
  \newcommand*\lineheight[1]{\fontsize{\fsize}{#1\fsize}\selectfont}%
  \ifx\svgwidth\undefined%
    \setlength{\unitlength}{416.5212412bp}%
    \ifx\svgscale\undefined%
      \relax%
    \else%
      \setlength{\unitlength}{\unitlength * \real{\svgscale}}%
    \fi%
  \else%
    \setlength{\unitlength}{\svgwidth}%
  \fi%
  \global\let\svgwidth\undefined%
  \global\let\svgscale\undefined%
  \makeatother%
  \begin{picture}(1,0.41460695)%
    \lineheight{1}%
    \setlength\tabcolsep{0pt}%
    \put(0,0){\includegraphics[width=\unitlength,page=1]{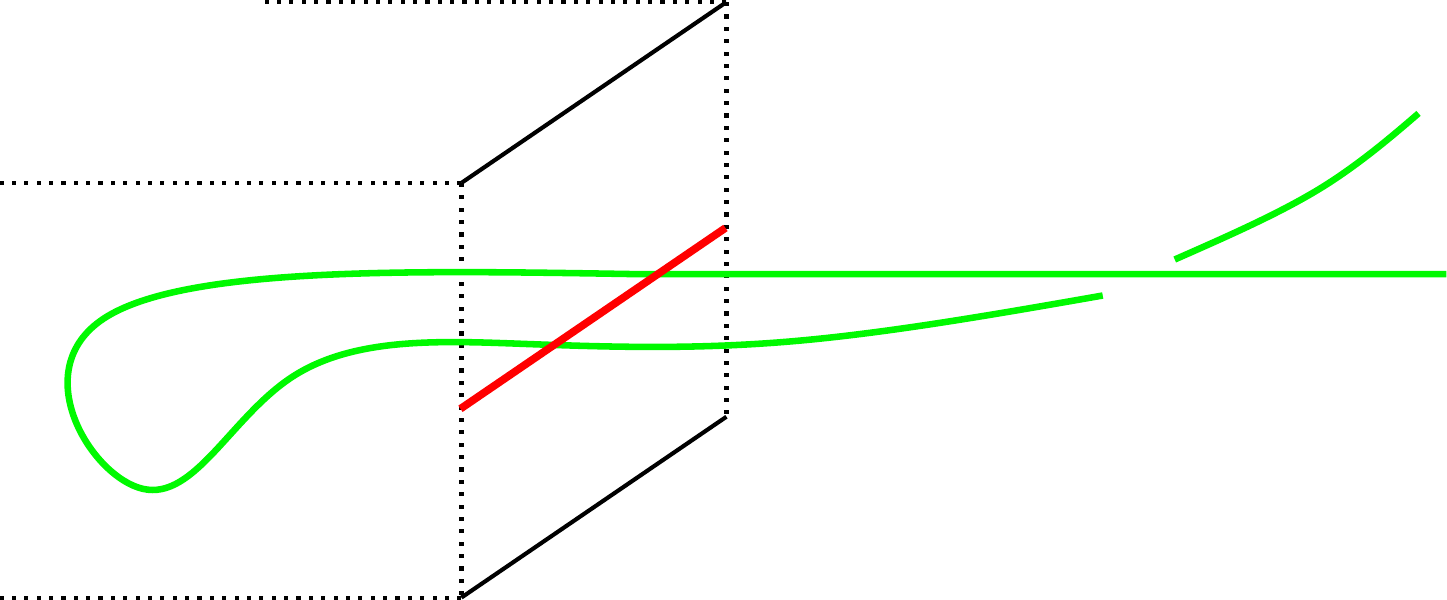}}%
    \put(0.5153628,0.2710864){\color[rgb]{0,0,0}\makebox(0,0)[lt]{\lineheight{1.25}\smash{\begin{tabular}[t]{l}$\Ga$\end{tabular}}}}%
    \put(0.04988124,0.24426852){\color[rgb]{0,0,0}\makebox(0,0)[lt]{\lineheight{1.25}\smash{\begin{tabular}[t]{l}$\La$\end{tabular}}}}%
    \put(0.3755268,0.1312504){\color[rgb]{0,0,0}\makebox(0,0)[lt]{\lineheight{1.25}\smash{\begin{tabular}[t]{l}$\dd_0 \La$\end{tabular}}}}%
    \put(0.38702013,0.24235296){\color[rgb]{0,0,0}\makebox(0,0)[lt]{\lineheight{1.25}\smash{\begin{tabular}[t]{l}$\dd_1 \La$\end{tabular}}}}%
  \end{picture}%
\endgroup%

  \caption{Completion of a Legendrian : $\dd_1 \La$ is extended cylindrically, and $\dd_0 \La$ is extended by a positive wrapping.}
  \label{fig.completion}
\end{figure}

\begin{rmk}\label{rmk.Completion_isotopy}
More generally, any isotopy $\mu$ of $\dd \La \subset (\Ga, \l_\Ga)$ induces an exact Lagrangian cobordism  $L \subset ([0, C]_\t \times \Ga, e^\t \l_\Ga)$, as described \cref{ssec.ConstrCobordLagr}. This Lagrangian lift to a Legendrian $\hat L$ in the contactisation, and we can extend $\La$ into $\La^\mu := \La \cup \hat L$. 
\end{rmk}

\subsection{Adapted almost complex structures} \label{ssec.Struct_p-complexe}

Let $W$ be a manifold of even dimension. An {\it  almost complex structure} is a smooth linear bundle map $J:TW \ra TW$, such that $J^2 = -Id$ (on each fiber). If $(W, \w)$ is symplectic, $J$ will be called {\it $\w$-admissible} if $\w(v,Jv) > 0$ and $\w(., .) = \w(J., J.)$ (in other words $\w(., J.)$ define a metric).

The space of admissible almost complex structure on $(W, \w)$ is non-empty and contractible, because they are in bijection with the metrics. 

\paragraph{In a symplectisation.}
\begin{dfn} \label{JadaptSymp} Let $(Y, \xi)$ be contact, $W=\RR_s\times Y$ its symplectisation, and $J$ 
an almost complex structure on $W$. It will be called {\it adapted} to the symplectisation if
\begin{itemize}
\item $J$ is invariant by $\RR$-translation ;
\item on each level $\{s\} \times Y$, $J$ preserves $\xi$ ;
\item There exists a form $\lambda$, $\xi$-adapted, such $J$ maps $\dd_s$ to $R_\lambda$ ;
\item $J_{\rest \xi}$ is $d\lambda$-admissible.
\end{itemize}
\end{dfn}
Note that the last condition doesn't depend on the choice of $\lambda$. If we want to precise the used contact form we will say that $J$ is $\lambda$-adapted. Moreover, $J$ is fully determined by $\l$ and its projection $\bar{J} = J_{\rest\{s\} \times \xi}$.

\paragraph{In a cobordism.}
Let $(W, \b)$ be a Liouville cobordism from $(V_+, \l_+)$ to $(V_-, \l_-)$, and $J$ an admissible almost complex structure. We will say that $J$ is adapted if there exists a compact $K$ such that $W \setminus K = \Symp_+(V_+, \l_+) \sqcup \Symp_-(V_-, \l_-)$ and $J$ is adapted to the symplectisations in the sense of \cref{JadaptSymp}.

\paragraph{In a sutured manifold.}

We take the definitions of \cite{CGHH10_Sutures}, which are unchanged in the Legendrian case. 

\begin{dfn} \label{JajustSut} Let $(M, \Gamma, \N_0(\Gamma), \xi)$ be a sutured contact manifold, and $\lambda$ an adaped form. An almost complex structure on the symplectisation (of the completion) $W= \RR_s \times M^*$ will be called {\it tailored} if 
\begin{itemize}
\item $J$ is $\l^*$-adapted
\item $J$ is $\dd_t$-invariant on a neighbourhood of $M^* \setminus \mathring M$ 
\item The projection of $J$ to $T\hat R_\pm$ (parallel to $t$ and $s$), denoted $J_\pm$, is adapted to the Liouville form $\hat{\beta}_\pm$ : it is is $d\beta_\pm$-positive on $R_\pm$, and $\l_\Ga$-adapted on $\{\tau \geq 0\} = \RR_+ \times \Gamma$. 
\end{itemize}
\end{dfn}
In particular the restriction to $\ker \l_\Ga$, which we denote $J_Ga$, is well-defined and 
uniquely determines $J$ on $\{\t \geq 0\}$. \\

The main feature of those almost-complex structures is the nice behaviour of the projection to $\RR^+_\t \times \Ga$ : 

\begin{lem} 
Let $J$ be an almost complex structure tailored to the symplectisation of a sutured manifold. Then
\begin{itemize}
\item $d\pi \circ J = J_\Ga \circ d\pi$, where $\pi$ is the projection parallel to $s$ and $t$ ;
\item $\l_\Ga \circ J = d\t$.
\end{itemize}
\end{lem}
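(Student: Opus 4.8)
The plan is to work entirely on the horizontal region $\{\t \geq 0\} = \RR_+ \times \Ga$ of the completion, where $\l^* = C\,dt + e^\t\l_\Ga$ and we have coordinates $(s,t,\t,x)$ with $x \in \Ga$. First I would record the splitting $TW = \ker d\pi \oplus T\hat R_\pm$ along this region, where $\ker d\pi = \<\dd_s, \dd_t\>$ is the vertical plane and $T\hat R_\pm = \<\dd_\t\> \oplus T\Ga$ is the horizontal part, on which $d\pi$ restricts to the identity. The whole lemma then reduces to how $J$ interacts with this splitting, which is exactly what the tailored condition controls.

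Next I would pin down $J$ on the vertical plane. Since $d\l^* = e^\t\, d\t\wedge\l_\Ga + e^\t\, d\l_\Ga$ has no $\dd_t$-component, one checks directly that $R_{\l^*} = C^{-1}\dd_t$ here; as $J$ is $\l^*$-adapted it sends $\dd_s$ to $R_{\l^*}$, giving $J\dd_s = C^{-1}\dd_t$ and $J\dd_t = -C\,\dd_s$, so $J$ preserves $\ker d\pi$. For the first identity I would then split an arbitrary vector $X = X' + X''$ with $X' \in \ker d\pi$ and $X'' \in T\hat R_\pm$: the term $JX'$ stays vertical and dies under $d\pi$, while the defining property of a tailored $J$ — that its projection to $T\hat R_\pm$ parallel to $s$ and $t$ is the $\l_\Ga$-adapted structure $J_\Ga$ — reads precisely $d\pi(JX'') = J_\Ga(X'')$, since on $\{\t\geq 0\}$ that projection is $d\pi$ itself. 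Adding the two contributions and using $d\pi X = X''$ gives $d\pi\circ J = J_\Ga\circ d\pi$.

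For the second identity I would test the $1$-form equality $\l_\Ga\circ J = d\t$ on the spanning set $\dd_s, \dd_t, \dd_\t, R_\Ga$ together with $\ker\l_\Ga$. On $\dd_s$ and $\dd_t$ both sides vanish, because $J\dd_s, J\dd_t$ are vertical and $\l_\Ga$ kills the vertical plane. For the remaining vectors the first identity already computes the $T\hat R_\pm$-component of their $J$-image: $d\pi(J\dd_\t)= J_\Ga\dd_\t = R_\Ga$, $d\pi(J R_\Ga) = J_\Ga R_\Ga = -\dd_\t$ and $d\pi(Jv) = J_\Ga v \in \ker\l_\Ga$ for $v \in \ker\l_\Ga$, with the remaining components vertical and hence annihilated by $\l_\Ga$. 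Since $\l_\Ga(R_\Ga)=1$ while $\l_\Ga$ vanishes on $\dd_\t$ and on $\ker\l_\Ga$, I obtain $\l_\Ga(J\dd_\t)=1$, $\l_\Ga(JR_\Ga)=0$ and $\l_\Ga(Jv)=0$, matching $d\t(\dd_\t)=1$, $d\t(R_\Ga)=0$ and $d\t(v)=0$.

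The main obstacle is interpretive rather than computational: one must read the tailored condition ``the projection of $J$ to $T\hat R_\pm$ parallel to $s$ and $t$ is $\l_\Ga$-adapted'' as the identity $d\pi\circ J = J_\Ga$ on $T\hat R_\pm$, and confirm that on the completed horizontal region this projection genuinely coincides with $d\pi$. Once the Reeb field is identified as $C^{-1}\dd_t$ and this reading is fixed, both statements are linear algebra on the splitting $\ker d\pi \oplus T\hat R_\pm$.
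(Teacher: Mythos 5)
Your proof is correct and follows essentially the same route as the paper: the first identity is read off from the definition of a tailored structure (together with the observation that $R_{\l^*}=C^{-1}\dd_t$ forces $J$ to preserve $\<\dd_s,\dd_t\>$), and the second is checked on the spanning set $\dd_s,\dd_t,\dd_\t,R_\Ga,\ker\l_\Ga$. The only cosmetic difference is that the paper handles the $R_\Ga$ case via the decomposition $R_\Ga=(R_\Ga-\tfrac{e^\t}{C}\dd_t)+\tfrac{e^\t}{C}\dd_t$ into $\xi$- and Reeb-components, whereas you deduce it from the first identity; both are fine.
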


\begin{proof}
The first point is by definition of a tailored almost complex structure. For the second part, the two 1-forms vanish on $\dd_s, \dd_t$ and $\xi_\Ga$, evaluate to $1$ in $\dd_\t$, and vanish on $R_\Ga$ 
{\sm since
\[ R_\Ga = \left(R_\Ga - \frac{e^\t}{C}\dd_t \right) + \frac{e^\t}{C}\dd_t.\]}
\end{proof}

\subsection{Holomorphic curves and energies}
We now define the moduli spaces which will be used to construct our invariants. Let us first recall some definitions. 

\paragraph{Chords and orbits.}

Given a contact manifold $(V,\l)$, a {\it Reeb orbit} is a closed trajectory of the Reeb vector field, and their set will be denoted $\P(V, \l)$.  The {\it action} of an orbit $\g$ is $\A(\g) = \int_\g \l$, which is also the length of the trajectory (parameterised using the Reeb vector field). A Reeb orbit $\g$ is called {\it non-degenerate} if the linearized return map of the Reeb flow, restricted to $\xi_\g$, has no fixed vector.

Given a  Legendrian $\La \subset (V, \l)$, a {\it Reeb chord} is a trajectories of the Reeb vector field going from the Legendrian to itself, and their set will be denoted $\C(\La; V, \l)$. Given a second Legendrian $\La' \subset (V, \l)$, the set a chords going from $\La$ to $\La'$ will be denoted $\C(\La, \La' ; V,\l)$.
The {\it action} of a chord $c$ is once again $\A(c)  = \int_c \l $, and a chord $c$ of action $T$ will be called non-degenerate if
\[(d\phi_R^T)(T_{c(0)} \La) \pitchfork T_{c(T)} \La.\]

\begin{dfn}
A sutured contact manifold will be called non degenerate if all Reeb orbits are non-degenerate. \\
A sutured Legendrian $\La \subset (V, \l, \Ga)$ will be called 
\begin{itemize}
    \item non-degenerate if all Reeb orbits and chords are non-degenerate (in $(V, \l)$) ;
    \item totally non-degenerate if it is non-degenerate, and all Reeb chords of $\C(\dd \La; \Ga, \l_\Ga)$ are non-degenerate.
\end{itemize}
A sutured contact manifold will be called hypertight if all Reeb orbits are non-contractibles. \\
A sutured Legendrian $\La \subset (V, \l, \Ga)$ will be called relatively hypertight if all Reeb chords are non-contractible.
\end{dfn}

Given a sutured contact manifold, the set of adapted contact forms which are non-degenerate is a countable intersection of dense open sets.
Similarly, given a sutured Legendrian $\La \subset (V, \l, \Ga)$, the set of adapted contact forms so that $\La$ is totally non-degenerate is also a countable intersection of dense open sets.

\begin{rmk}
It is also possible to fix the contact form, and instead to perturb the Legendrian. Once again the set of perturbations so that the Legendrian is totally non-degenerate is a countable intersection of dense open sets.
\end{rmk}

\paragraph{Holomorphic curves.} Consider a Liouville cobordism $(W, \b)$ (which could simply be a symplectisation), with an adapted complex structure $J$. Assume that we have a (embedded) Lagrangian cobordism $L \subset (W, \b)$, of boundary $\La_\pm \subset \dd_\pm W$.

An {\it holomorphic curve} is a  map $F : (S, j) \ra (W, J)$ defined on a Riemann surface, such that $dF \circ j = J \circ dF$ and $F(\dd S) \subset L$.

Let $g$ be the metric induced by $(\w, J)$. We then have the identity
\[ A (F) =  \text{Area}_g (F(S)) = \int_S F^*\w  \geq 0\]

We also define several notions of energy for homolorphic curves :  
\begin{itemize}
    \item  If $F : (S, j) \ra (W, d\beta, J)$ is an holomorphic curve in a symplectic manifold $(W, \w)$ endowed with an adapted complex structure $J$, we define for any function $\phi : W \ra \RR_+$
\[E_\phi(F) = \int_S F^*(\phi \w) \geq  0\]

\item If $F = (a, f) : (S, j) \ra  = (\RR_s \times V, \lambda, J)$ is an holomorphic curve in the symplectisation of $(V, \lambda)$, endowed with an adapted complex structure $J$, we set
\begin{align*}
   E_{d\lambda} (F) &  = \int_S f^* d\lambda \geq 0 \\ 
   E_\lambda(F) & = \underset{\sm \vphi:\RR \ra \RR_+, \int \vphi = 1}{\sup} \int_S (\vphi \circ a) ds \wedge f^*\lambda \geq 0 \\
   E(F) & = E_{d\lambda} (F)  + E_\lambda(F) \\
    & \geq \underset{\sm \psi : \RR \ra [0,1], \psi'\geq 0}{\sup} \int_S F^*d( \psi \lambda) \geq 0
\end{align*}
\end{itemize}
Note that those definitions also make sense for a curve in a Liouville cobordism, using the $\RR$-coordinate at infinity.

\paragraph{Monotonicity.}

Consider a Lagrangian $L$ in a Liouville domain $(W, \b)$, which we complete cylindrically into $\hat L \subset (\hat W, \b)$. This non-compact manifold has a so-called {\it bounded geometry}, see \cite[Lem 2.43]{GPS17_Fuk_sectors}. In particular, the following monotonicity result holds :

\begin{prop}\cite[Propositions 4.7.1 and 4.7.2(iv)]{AL94_Sik_Ch5:Prop_hol} \label{prop.Monotonicite_global}
If $(W, \w, J, L)$ has bounded geometry, and $F :(S,j) \ra (W,J)$ is a compact holomorphic curve such that 
\[ f(S) \cap K \neq \emptyset \text{  and  } F(\dd S) \subset K \cup L, \]
then there exists a constant $C$ such that $F(S) \subset \N_g(K, C \Area(F(S)) )$
\end{prop}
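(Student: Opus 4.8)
The plan is to reduce the statement to a uniform distance estimate and then derive it from a local monotonicity inequality for $J$-holomorphic curves with Lagrangian boundary. Writing $A = \Area(F(S))$ and $D = \sup_{z \in S} \mathrm{dist}_g(F(z), K)$, the assertion $F(S) \subset \N_g(K, C A)$ is exactly the inequality $D \leq C A$ for a constant $C$ depending only on $(W, \w, J, L)$ and $K$, not on $F$. Since $W$ away from a compact set, together with the completed Lagrangian $L$, has bounded geometry, one has uniform control on the injectivity radius, the curvature of $g$, and the second fundamental form and Weinstein neighbourhood of $L$; this is precisely what will make $C$ independent of the curve.

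The key ingredient I would isolate first is the \emph{relative local monotonicity inequality}: there exist constants $\e_0 > 0$ and $c_0 > 0$, depending only on the bounded geometry of $(W, \w, J, L)$, such that for every $J$-holomorphic curve $F$, every $p \in F(S)$ and every $r \leq \e_0$, if the curve exits the ball $B(p,r)$ and the only part of $F(\dd S)$ inside $B(p,r)$ lies on $L$, then
\[ \Area\big(F(S) \cap B(p,r)\big) \geq c_0\, r^2. \]
For balls meeting no boundary at all this is the classical monotonicity inequality, proved by regarding the holomorphic curve as a minimal surface and combining the coarea formula with the isoperimetric inequality, the uniform constants coming from bounded geometry. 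The relative version, where $F(\dd S)$ may enter the ball along $L$, is the delicate point: I would reduce it to the interior case by a doubling (Schwarz reflection) argument in a uniform Weinstein neighbourhood of $L$, in which $L$ is the fixed locus of an antisymplectic involution and $J$ can be arranged to anticommute with it up to a controlled error, so that a half-ball estimate follows from a full-ball estimate for a curve of twice the area. I expect making this reflection argument uniform over the non-compact part, which is exactly where bounded geometry of $L$ is used, to be the main obstacle.

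Granting the local inequality, the global bound follows by a chain-of-balls argument. Assume $F$ connected (otherwise argue on the component meeting $K$). The function $f(z) = \mathrm{dist}_g(F(z), K)$ is continuous on $S$, attains $0$ since $F(S) \cap K \neq \emptyset$ and attains $D$, so by connectedness it takes every intermediate value. For each odd integer $k$ with $(k+\tfrac12)\e_0 \leq D$, choose $z_k \in S$ with $f(z_k) = (k+\tfrac12)\e_0$ and set $B_k = B(F(z_k), \e_0/2)$. Because consecutive chosen centres have $f$-values differing by at least $2\e_0$ and $|\,\mathrm{dist}_g(x,K) - \mathrm{dist}_g(y,K)\,| \leq \mathrm{dist}_g(x,y)$, the balls $B_k$ are pairwise disjoint; moreover each $B_k$ stays at distance at least $k\e_0 > 0$ from $K$, so the only boundary it can contain lies on $L$, and the curve exits $B_k$ since $f$ passes through values above and below $(k+\tfrac12)\e_0$. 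Applying the local inequality in each $B_k$ gives $\Area(F(S)\cap B_k) \geq c_0 (\e_0/2)^2$, and summing over the roughly $D/(2\e_0)$ disjoint balls yields $A \gtrsim c_0 \e_0 D$. Hence $D \leq C A$ with $C = 8/(c_0 \e_0)$, which is the desired neighbourhood estimate.
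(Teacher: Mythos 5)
The paper offers no proof of this proposition: it is quoted verbatim from Audin--Lalonde (Sikorav's chapter), and your two-step scheme --- a local monotonicity inequality for balls whose only boundary intersection lies on $L$, globalised by a chain of disjoint balls indexed by level sets of $\mathrm{dist}_g(\cdot,K)$ --- is precisely the argument of that reference, so in structure you are reproducing the intended proof. Your identification of where bounded geometry enters (uniform $\e_0$, $c_0$ over the non-compact parts of $W$ and $L$) is correct, and you rightly isolate the Lagrangian-boundary local monotonicity as the only delicate ingredient. One caution on the doubling argument you sketch: after Schwarz reflection the almost complex structure is in general only continuous across $L$, so one should either reflect the surface as a rectifiable current and run the coarea/isoperimetric argument for almost-minimal surfaces, or prove the half-ball estimate directly in a uniform Darboux--Weinstein chart; you leave this open, which is acceptable since the statement is itself a citation.

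The one genuine gap is quantitative and sits in your final paragraph. The chain of balls only starts once $D=\sup_z\mathrm{dist}_g(F(z),K)$ exceeds roughly $\tfrac32\e_0$: the number of admissible odd $k$ is $D/(2\e_0)$ \emph{minus a bounded quantity}, so summing the local estimates gives $A\geq c_0\e_0 D/8 - O(\e_0^2)$, i.e.\ $D\leq CA+O(\e_0)$, not $D\leq CA$. In the complementary regime $D\lesssim\e_0$ the only available input is a single application of the local inequality in $B(F(z^*),D/2)$ at a farthest point $z^*$ (this ball avoids $K$, so its only boundary is on $L$), which yields $c_0D^2/4\leq A$, hence $D\leq 2\sqrt{A/c_0}$ --- a square-root bound, strictly weaker than $D\leq CA$ when $A$ is small. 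What your argument actually establishes is therefore
$F(S)\subset \N_g\big(K,\, C\max(\Area(F(S)),\sqrt{\Area(F(S))})\big)$,
and the literal linear statement would require either a lower bound on the area or a separate argument in the small-$D$ regime. This is worth flagging, but it is harmless for the single use of the proposition in the paper (\cref{ssec.proof}), where one only needs ``small energy implies the curve stays in a prescribed neighbourhood of $K$''; for that purpose the square-root bound you do obtain is exactly what is required.
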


\subsection{Maximum principles in a Liouville cobordism}

We prove several maximum principles allowing the use of the results of \cite{BEHWZ03_Compactness_SFT}. We start with some standard situations before discussing the sutured case.

\begin{lem}[Folklore, \cite{GPS17_Fuk_sectors}] \label{PrincMaxSympl}
Let $L \subset (\RR^+_s \times V, e^s \lambda)$ be a cylindrical  Lagrangian in the positive symplectisation of a contact manifold, $J$ an adapted almost complex structure, and $F=(a, f)  : S \ra \RR_+ \times V$ an holomorphic curve, without punctures, such that 
\[ \dd S = \dd_l S \cup \dd_n S,\ F(\dd_l S) \subset L,\ a(\dd_n S) = 0 \text{ and } F^*\lambda_{\rest \dd_l S} \leq 0\]
Then $F$ is a constant map.
\end{lem}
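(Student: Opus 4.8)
This is a maximum-principle argument for holomorphic curves in a positive symplectisation with cylindrical Lagrangian boundary. The standard strategy is to use the energy/area identity together with the structure of the adapted almost complex structure to show that the boundary contribution forces the total area to vanish. The plan is to integrate $F^*d(e^s\lambda)$ over $S$, apply Stokes' theorem, and control each piece of the boundary using the hypotheses on $\dd_l S$ and $\dd_n S$.

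\textbf{Key steps.} First I would observe that $F^*(e^s d\lambda + e^s ds\we\lambda) = F^*d(e^s\lambda)$, and that since $J$ is adapted and $F$ is holomorphic, the integrand $F^*d(e^s\lambda)$ is pointwise nonnegative (this is the area form $\Area_g(F(S))$ up to the conformal factor, using that $\w(v, Jv) > 0$). Concretely, $\int_S F^*d(e^s\lambda) = A(F) \geq 0$, with equality forcing $dF \equiv 0$. Next I would apply Stokes: $\int_S F^*d(e^s\lambda) = \int_{\dd S} F^*(e^s\lambda)$, and split the right-hand side over $\dd_l S$ and $\dd_n S$. On $\dd_l S$ we have $F(\dd_l S)\subset L$, so $e^s$ is bounded and $\int_{\dd_l S} F^*(e^s\lambda) = \int_{\dd_l S} e^{a} F^*\lambda \leq 0$ by the hypothesis $F^*\lambda_{\rest \dd_l S}\leq 0$ (since $e^{a}>0$). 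On $\dd_n S$ we have $a = 0$, i.e. $F$ maps $\dd_n S$ into the contact boundary $\{s=0\}$; here I would invoke the adaptedness of $J$ and the boundary condition to show the contribution is controlled — either it is a free boundary on which $F$ is holomorphic and the contact-type condition gives $\int_{\dd_n S} F^*\lambda \leq 0$ as well, or $\dd_n S$ lies on a level set where the maximum principle for the coordinate function $a$ applies.

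\textbf{Main obstacle.} The delicate point is the treatment of the boundary component $\dd_n S$, where only $a(\dd_n S) = 0$ is assumed but no Lagrangian condition is imposed. The cleanest route is to treat $a = s\circ F$ as a subharmonic function: since $J$ is adapted, $a$ satisfies a maximum principle (it cannot attain an interior maximum), and the condition $a(\dd_n S)=0$ combined with $a \geq 0$ everywhere (the curve lives in $\RR^+_s\times V$) pins $a$ down along that boundary. I expect the argument to run: the area identity gives $A(F) = \int_{\dd_l S} e^{a}F^*\lambda + \int_{\dd_n S} e^{a}F^*\lambda \leq 0$ once one verifies the $\dd_n S$ term is $\leq 0$, and combined with $A(F)\geq 0$ this yields $A(F)=0$, hence $F$ is constant. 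I would verify the $\dd_n S$ inequality by noting that at points of $\dd_n S$ the level $\{s=0\}$ is the minimum of $a$, so $da$ points inward and the holomorphicity relation $da = -f^*\lambda\circ j$ (coming from $J\dd_s = R_\lambda$) forces $f^*\lambda \leq 0$ there as well; this is where adaptedness of $J$ is essential and is the technical heart of the proof.
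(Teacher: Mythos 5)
Your argument is correct, but it takes a genuinely different route from the one in the paper. The paper's proof first notes that $e^a$ is subharmonic (so $a$ has no interior maximum) and then runs Stokes with a weight $\vphi(s)$ chosen so that $\vphi(0)=0$ and $\vphi'\geq 0$: the condition $\vphi(0)=0$ makes the boundary term over $\dd_n S$ vanish identically, so no analysis of $F^*\lambda$ on $\dd_n S$ is needed; the conclusion there is only $F^*d\lambda=0$, after which one must still exclude cylinders over Reeb trajectories using the no-puncture hypothesis. You instead take the weight $e^s$ (i.e.\ the full symplectic form $d(e^s\lambda)$), which does not vanish on $\{s=0\}$, and you control the $\dd_n S$ term by a Hopf-lemma-type observation: since $a\geq 0$ attains its minimum along $\dd_n S$ and adaptedness gives $\lambda\circ J = ds$, the pullback $F^*\lambda$ is non-positive on the positively oriented boundary there. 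This is a correct and self-contained argument, and your endgame is in fact slightly cleaner: $\int_S F^*\w =0$ with $J$ $\w$-compatible kills $dF$ outright, with no need for the Reeb-cylinder exclusion (compactness of $S$, i.e.\ absence of punctures, is still used for Stokes). Two small cautions: the identity you quote should read $f^*\lambda\circ j = da$ (equivalently $f^*\lambda = -\,da\circ j$) with the paper's convention $J\dd_s = R_\lambda$ and the outward-normal-first boundary orientation --- with the sign written the other way your inequality on $\dd_n S$ would flip, so the orientation bookkeeping is genuinely load-bearing here; and note that the paper's $\vphi(0)=0$ trick is the one reused verbatim in the wrapped setting (\cref{PrincMaxEnrou}), where the sign of $F^*\lambda$ on the Lagrangian boundary is no longer controlled pointwise, so your variant would not transport there as directly.
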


\begin{proof}
We first use the proof of \cite{Bou03_Intro_contact} : denote $s$ the $\RR$-coordinate. Setting $d^\CC \a = d\a \circ J$ we have $\Delta = -d d^\CC$, hence
\[  \Delta e^a = \Delta (F^* e^s) = F^* (\Delta e^s) \]

However $\Delta e^s = -d (d(e^s) \circ J)=  -d (e^s ds \circ J) =d (e^s \lambda) = \w$, hence $\Delta e^a = F^* \w >0$ on $S$.

Consequently the function $e^a$ has no local maximum on $\mathring S$, so it is also the case of $a$.

For deal with the boundary, we reproduce the proof of \cite[Lemme 2.45]{GPS17_Fuk_sectors}, see also \cite[Lemme 7.2]{AbSe09_Open_string_analogue} for the case of Floer curves.
 
Let $\vphi : \RR_+ \ra \RR_+$ be a function such that $\vphi(0)=0$ and $\vphi' \geq 0$. By Stokes formula we have : \begin{align*}
0 \leq & \int_S F^*(\vphi(s) d\lambda) = \int_{\dd S} F^*(\vphi(s) \lambda) - \int_S F^*(\vphi'(s) ds \wedge \lambda) \\
& = \int_{\dd_l S} \vphi\circ a. F^*\lambda - \int_S \vphi' \circ a. F^*(ds \wedge \lambda) \leq 0  
 \end{align*}
Thus $F^* d\l= 0$, hence $\im (dF) \subset \Vect(R, \dd_s)$. If the curve is not constant, it should be a cylinder on a Reeb trajectory, which is impossible since we assumed that the curve was without puncture.
\end{proof}

Thus an holomorphic curve in a symplectisation $F: (S, \dd S) \ra (\RR \times V, \RR \times \Lambda)$ has no local maximum, even in the boundary.

Let $(\xi_s)_{s\in [0,1]}$ be a path of contact structures on $V$, and fix adapted contact forms $\l_0$ and $\l_1$, as well as admissible almost-complex structures $\bar{J}_0 : \xi_0 \ra \xi_0$ and $\bar{J}_1 : \xi_1 \ra \xi_1$. As seen in \cref{ssec.ConstrCobordLagr}, we can construct (using Gray's theorem) a Liouville cobordism $[-S, S] \times V$ going from $(V, C\l_1)$ to $(V, \l_0)$. The space of admissible almost-complex structures being contractible, there exists $J$ admissible interpolating between the chosen almost-complex structures. The following lemma shows that we can pick $J$ such that the $s$-coordinate is pluri-sub-harmonic (after reparametrisation).

\begin{lem}  [In an isotopy cobordism] 
\cite[Lemme 3.3]{CGHH10_Sutures}
Let $\RR_s \times Y$ be a cobordism from $(Y, C \lambda_1)$ to $(Y, \lambda_0)$ induced by an isotopy, as described previously. Then there exists a complex structure $J$ on $\RR \times Y$, $s_\pm \in \RR_\pm$ and $\phi : \RR \ra \RR$ increasing such that 
\begin{itemize}
\item for $s > s_+$, $J$ is adapted to $C \lambda_1$ and $J_{\rest \xi_+} = J_1$.  
\item for $s < s_-$, $J$ is adapted à $\lambda_0$ and $J_{\rest \xi_-} = J_0$.
\item $\phi (s) : \RR \times Y \ra \RR$ is $J$-pluri-sous-harmonic.
\end{itemize}
In particular if $u : (S, j) \ra (\RR \times Y, J)$ is a $J$-holomorphic curve, $s \circ u$ has no local maximum local on $\mathring{S}$. \end{lem}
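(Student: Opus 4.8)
The plan is to construct the almost complex structure $J$ explicitly by interpolating between the adapted structures at the two ends, and then to find a reparametrisation $\phi$ of the $s$-coordinate making it $J$-plurisubharmonic. Recall from \cref{ssec.ConstrCobordLagr} that the cobordism carries the Liouville form $\b = e^s \l_{\psi(s)}$, where $\psi : \RR \to [0,1]$ is increasing, vanishing for $s \ll 0$ and equal to $1$ for $s \gg 0$ (and $\psi'$ kept small so the contact condition stays open). First I would fix, on the two cylindrical ends, the structures already prescribed: for $s > s_+$ take $J$ adapted to $C\l_1$ with $J_{\rest \xi_+} = J_1$, and for $s < s_-$ take $J$ adapted to $\l_0$ with $J_{\rest \xi_-} = J_0$. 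On the transition region $[s_-, s_+]$ I would build $J$ by choosing a path of complex structures on the hyperplane field $\ker \l_{\psi(s)}$ interpolating between $\bar J_0$ and $\bar J_1$ (possible since the space of admissible structures is contractible, as noted in \cref{ssec.Struct_p-complexe}), and then declaring $J$ to send $\dd_s$ to the Reeb-type direction of $\l_{\psi(s)}$; this determines $J$ completely from $\l$ and its restriction to the hyperplane field, exactly as in \cref{JadaptSymp}.

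The key computation is to verify the plurisubharmonicity. Following \cite[Lemme 3.3]{CGHH10_Sutures}, I would seek $\phi$ so that $-dd^\CC(\phi \circ s) = -d\big((\phi'\circ s)\, ds \circ J\big)$ is a positive $(1,1)$-form, i.e. tames $J$ in the relevant sense. On each cylindrical end this is automatic: there the computation reduces to the one in the proof of \cref{PrincMaxSympl}, where $-dd^\CC e^s = \w > 0$, so any convex $\phi$ with the right asymptotics works. The content is confined to the transition zone $[s_-, s_+]$, where $ds \circ J$ picks up, besides the contact form $\l_{\psi(s)}$, an extra term governed by $\psi'(s)\, \dd_s \l_{\psi(s)}$ coming from the $s$-dependence of the hyperplane field. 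The strategy is to choose $\phi$ growing fast enough on $[s_-,s_+]$ that the positive contribution from the $d\l_{\psi(s)}$-part dominates this error term; since $\psi'$ was already taken small to guarantee the Liouville condition, the error is controllable, and a sufficiently convex reparametrisation $\phi$ absorbs it.

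The main obstacle I anticipate is precisely this quantitative matching on the transition region: one must simultaneously keep $\phi$ increasing globally (so that $s\circ u$ and $\phi\circ s \circ u$ have the same local maxima) while forcing $-dd^\CC(\phi \circ s)$ to stay strictly positive where the complex structure is genuinely $s$-dependent. The convexity of $\phi$ and the smallness of $\psi'$ are the two free parameters, and the estimate has to confirm they can be tuned compatibly with the fixed asymptotic behaviour required at $s_\pm$. Once the form is positive, the final conclusion is standard: for any $J$-holomorphic $u : (S,j) \to (\RR \times Y, J)$, the function $\phi \circ s \circ u$ is subharmonic, so it obeys the maximum principle and has no interior local maximum on $\mr S$; since $\phi$ is increasing, the same holds for $s \circ u$ itself.
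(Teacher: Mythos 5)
Your strategy is viable and reaches the right conclusion, but it takes a genuinely different route from the paper, and the difference is worth spelling out. You keep the $s$-dependent hyperplane field $\ker \l_{\psi(s)}$ and interpolate complex structures on it directly; the price is the error term $\phi'(s)\,\psi'(s)\, ds \wedge \dd_\s\l_\s$ in $-dd^\CC(\phi\circ s)$, which you propose to absorb by convexity of $\phi$. That absorption does close: writing $v = a\dd_s + bR_s + X$ with $X$ in the hyperplane field, the error contributes $\phi'\psi'\big((a^2+b^2)\,\dot\l(R_s) + a\,\dot\l(JX) + b\,\dot\l(X)\big)$, and an AM--GM estimate shows positivity of the whole expression once $\phi''/\phi' \geq C\big(1+(\psi')^2\big)$ on the compact transition region --- so either a sufficiently convex $\phi$ or a sufficiently slow $\psi$ works, exactly the two knobs you identify. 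The paper instead eliminates the error term before it appears: it first applies Gray's theorem (\cref{prop.Moser}) to replace $\l_{\psi(s)}$ by $g(s,x)\,\l_-$ with a \emph{fixed} hyperplane field $\ker\l_-$ and a conformal factor satisfying $\dd_s g \geq 0$, and takes $J$ preserving that fixed field with $J\dd_s = R_s$. Then $ds\circ J = -g\l_-$ and
\[ \Delta s \;=\; -d(ds\circ J) \;=\; d(g\l_-) \;=\; g_s\, ds\wedge\l_- + d_Y\l, \]
which is manifestly nonnegative on $(v,Jv)$ with no quantitative tuning; the reparametrisation $\phi$ in the statement is then only the affine rescaling $\tilde s = s/C$ needed to match $C\l_1$ at the positive end, not a highly convex function. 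So your approach buys directness (no preliminary straightening of the contact structures) at the cost of a genuine estimate that you have deferred rather than carried out --- and that estimate is the entire content of your proof, so it cannot be left as an anticipated obstacle. If you keep your route, write out the absorption inequality above; otherwise, importing the Gray-straightening trick makes the positivity immediate.
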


\begin{proof}
The proof is as in \cite{CGHH10_Sutures} : we start by showing that there exists $C > 0$ and $J$ adapted to $C.\lambda_+$ (resp. $\lambda_-$) for $s > 1$ (resp. $s<0$)  such that $s : \RR\times Y \ra \RR$ is $J$-pluri-sous-harmonic, then we will set $\tilde{s} = s/C$ for $s$ big enough to obtain the lemma.

According to Gray's theorem, we can assume that $\l_+ = f \l_-$, by composing with a diffeomorphism preserving the $s$-levels. We now consider the contact forms $\l = g(s, x) \l_-$ where $g$ is such that \begin{itemize}
\item $g(s, *) = 1$ for $s \leq 0$ ;
\item $g(s, *) = C f$, where $C$ is a constant greater than $\max(1/f)$ ;
\item $g_s = \dd_s g \geq 0$.
\end{itemize}

Take $J$ an almost complex structure preserving $\xi = \ker(\l_-)$, mapping $\dd_s$ to $R_s$ the Reeb vector field of $g(s, *)\l_-$ and such that $d_Y \l \circ (id \otimes J)$ is a metric on $\xi$ (where $d_Y$ is the differential on $Y$). We compute
\[ \Delta s = -d(ds \circ J) = d\l = g_s ds \wedge \l_- + d_Y \l.   \]
Splitting a vector into $v = a \dd_s + b R_s + X$, where $X \in \xi$, we have  $Jv = a R_s - b \dd_s + JX$ and we get
\[ \Delta s (v, Jv) = g^{-1} g_s  (a^2 + b^2) + d_Y\l(X, JX) \geq 0.  \]
\end{proof}

We now prove a similar result for curves with boundary. Let $\Lambda_t \subset (V, \xi_t)$ be a Legendrian isotopy, which induces an exact Lagrangian cobordism  $L \subset (\RR \times V, e^s \l_t) $ going from $\Lambda_+$ to $\Lambda_-$, cylindrical if $s< s_-$ or $s> s_+$. 

\begin{lem} \label{PrincDuMaxCobord}
There exists an almost-complex structures adapted to the cobordism, and such that there exists no holomorphic curve without puncture
\[ u : (S, \dd S,j) \ra ([s_-, \ity) \times V, \{s_-\} \times V \cup L, J)  \]
Moreover we can choose $J_{\rest \xi}$ for $s > s_+$ and $s < s_-$.
\end{lem}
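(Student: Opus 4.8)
The plan is to combine the interior maximum principle for the $s$-coordinate, which I get from the preceding maximum principle in an isotopy cobordism, with a boundary maximum principle along $L$ of the type proved in \cref{PrincMaxSympl}. First I would apply that preceding lemma to the cobordism $\RR_s \times V$ induced by the Legendrian isotopy $\La_t$: it produces an adapted almost complex structure $J$ together with an increasing function $\phi$ such that $\phi(s)$ is $J$-plurisubharmonic, and it leaves $J_{\rest \xi}$ free to be prescribed for $s > s_+$ and $s < s_-$. For any holomorphic $u = (a, f)$ the function $a = s \circ u$ then has no interior local maximum on $\mathring S$, so it attains its maximum over $S$ on the boundary $\dd S = \dd_n S \cup \dd_l S$.

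On $\dd_n S$ we have $a \equiv s_-$, the global minimum of the $s$-coordinate on $[s_-, \ity) \times V$, so the maximum cannot sit there unless $a$ is constant. On the cylindrical part $\{s > s_+\}$ of $L$, where $L = \RR \times \La_+$ and $\l_{\rest TL} = 0$, the boundary argument of \cref{PrincMaxSympl} applies verbatim and forbids a maximum of $a$. Hence everything reduces to excluding a maximum of $a$ on the non-cylindrical part $\{s_- \leq s \leq s_+\}$ of $\dd_l S \subset L$, which is the genuinely new situation.

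For this part I would reproduce the Stokes computation of \cref{PrincMaxSympl}, choosing a cutoff $\vphi : \RR \ra \RR_+$ with $\vphi(s_-) = 0$ and $\vphi' \geq 0$, so that
\[ 0 \leq \int_S u^*(\vphi(s)\, d\l) = \int_{\dd_l S} (\vphi \circ a)\, u^*\l - \int_S (\vphi' \circ a)\, u^*(ds \we \l), \]
where the $\dd_n S$ boundary term drops out because $\vphi(s_-) = 0$ and the last integrand is nonnegative by adaptedness of $J$. The difference with \cref{PrincMaxSympl} is that now $\l_{\rest TL} \neq 0$: exactness of $L$ gives $\l_{\rest TL} = e^{-s}\, d(e^s f) = df + f\, ds$, with $f$ the generating primitive of \cref{ssec.ConstrCobordLagr}, which vanishes on the cylindrical ends. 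Since $\vphi$ vanishes at $s_-$, integrating $\int_{\dd_l S}(\vphi\circ a)\, d(f \circ u)$ by parts makes the corner contributions (which lie on $\{s_-\} \times \La_-$) disappear, leaving a Lagrangian boundary term expressed purely through $f$ and $\vphi$. The hard part will be to show that this remaining term has the correct sign so that the inequality squeezes to $u^* d\l \equiv 0$; this is where one must use the monotonicity built into $\vphi$ together with the precise form of the primitive $f$, and it is the only step that does not follow formally from the two preceding lemmas. Granting it, $a$ has no maximum on $\dd_l S$ either, so $a \equiv s_-$, the image of $u$ lies in the single level $\{s_-\} \times V$, and since no non-constant holomorphic curve is contained in a level of an adapted $J$, the map $u$ is constant. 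This yields the non-existence statement, and the freedom to prescribe $J_{\rest \xi}$ at the two ends is inherited directly from the isotopy-cobordism lemma.
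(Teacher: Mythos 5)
Your setup is the right one and matches the paper's: use the plurisubharmonic function from the isotopy-cobordism lemma for the interior, then a Stokes computation with a cutoff $\vphi$ vanishing at $s_-$ for the boundary. But the proposal stops exactly at the decisive point: you write that ``the hard part will be to show that this remaining term has the correct sign \dots Granting it,'' and then conclude. That remaining term does \emph{not} have a definite sign for a fixed $\vphi$ (after integrating by parts it is $-\int_{\dd_l S}(f\circ u)\,(\vphi'\circ a)\,da$, and neither $f\circ u$ nor $da$ along $\dd_l S$ is sign-controlled), so the argument as you frame it cannot be closed. The paper's resolution is different and is the one idea your proposal is missing: keep the chain of inequalities
\[ 0 \leq \int_S u^*\bigl(\vphi(s)\,\Delta s\bigr) \leq \int_{\dd_l S} u^*\bigl(\vphi(s)\,df\bigr), \]
and let $\vphi$ increase pointwise to $1$. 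The right-hand side then converges to $\int_{\dd_l S} d(f\circ u)$, which vanishes for free because the endpoints of $\dd_l S$ (the corners of $\dd S$) lie on $\{s_-\}\times V$ where the primitive $f$ of the restricted form is constant (zero, in the normalisation of \cref{ssec.ConstrCobordLagr}). One never needs a sign for the boundary term at finite $\vphi$; one only needs it to vanish in the limit.

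Two smaller points. First, your conclusion is also framed differently from what the computation gives: the squeeze yields $\int_S u^*\Delta s = 0$, hence $u^*\,d\l = 0$ pointwise and $du=0$ on an open set, so the curve is constant by unique continuation --- there is no intermediate step where one localises a maximum of $a$ on $\dd_l S$ and then argues that the image lies in the level $\{s_-\}\times V$. The Stokes argument is global, not a pointwise exclusion of boundary maxima, so the three-case decomposition ($\dd_n S$, cylindrical part, non-cylindrical part) is not needed and slightly misrepresents how the proof closes. Second, your formula $\l_{\rest TL} = df + f\,ds$ uses the primitive of the Liouville form $e^s\l$; in the proof one works with the primitive of $g\l_-$ itself, which is simply $df$ with $f$ constant at $s_-$, and this is what makes the limiting boundary integral an exact form whose integral over $\dd_l S$ vanishes.
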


\begin{proof}
Once again the pluri-sous-harmonicity only forbid maximums in the interior of the surface. To generalise to the case of curves with boundary we use the Stokes formula.
We just showed that $u^*\Delta s$ is non-negative. If such a curve exists, we split its boundary into :
\[ \dd S = \dd_l S \cup \dd_n S, \quad u(\dd_l S) \subset L, \quad u(\dd_n S) \subset \{s_-\} \times Y.   \]   
Taking $\phi : \RR \ra \RR^+$ increasing and vanishing at $s_-$, we compute

\[ 0 \leq \int_S  u^*(\phi(s) \Delta s) = \int_S u^*(\phi(s) d(g\l_-)) = \int_{\dd S} u^*(\phi(s) g\l_-) - \int_S u^*(\phi'(s)  ds \wedge \l)   \] 
using $\Delta s = - d(ds \circ J) = d(g\l_-)$. The second term is negative since $ds \wedge \l$ is positive : if $v =  a \dd_s + b R_s + X$, then $ds \wedge \l (v, Jv) = a^2 + b^2$. Morevoer $\phi = 0$ on $\dd_n S$ and $(g \l_-)_{\rest L} = df$, where $f$ is zero at $s_-$. Thus we get
\[ 0 \leq \int_S u^*(\phi(s) \Delta s) \leq \int_{\dd_l S} u^*(\phi(s) df)  \]
Taking a sequence of function $\phi$ converging to $1$ (pointwise), this inequality becomes
\[ 0 \leq \int_S u^*(\Delta s) \leq \int_{\dd_l S} d(f\circ u) = 0  \]
since $f$ is constant at $s_-$.

Thus $u^* d\l = 0$ on $\mathring{S}$, and $du=0$ on an open set. By unicity of the holomorphic extension, the curve is constant.
\end{proof}

\subsection{Maximum principles in a sutured manifold}
\paragraph{Cylindrical completion.}

We extend the result of \cite{CGHH10_Sutures}, bounding curves in the completion of a sutured contact manifold, to the case of curves with boundary.

\begin{lem} \label{PrincMaxSut} Let $(V, \Gamma, \N_0(\Gamma), \lambda)$ be a sutured contact manifold endowed with an adapted contact form, $\Lambda \subset V$ a cylindrical Legendrian, $J$ an ajusted complex structure on $V^*$ and $F  \in \M(c_0; c_i, \Lambda^*)$  
a $J$-holomorphic curve. Then $\tau \circ F <0$ and $|t \circ F|$ is bounded by a constant (depending only on $c_+$, $J$, and $\lambda$).
\end{lem}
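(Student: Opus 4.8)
The plan is to confine $F$ separately in the two infinite directions of the completion $V^*$, reducing each to a maximum principle afforded by the tailored structure $J$. For the \emph{horizontal bound} $\tau\circ F<0$, note that on the end $\{\tau\geq 0\}=\Cont(\Symp_+(\Gamma))$ the projection $\pi$ parallel to $\dd_s,\dd_t$ onto $\Symp_+(\Gamma)=\RR^+_\tau\times\Gamma$ is $(J,J_\Ga)$-holomorphic, and there $\La^*=\RR^+_\tau\times\{0\}\times\dd\La$ projects to the cylindrical Lagrangian $\RR^+_\tau\times\dd\La$, on which $e^\tau\l_\Ga$ vanishes because $\dd\La$ is Legendrian and $\l_\Ga(\dd_\tau)=0$. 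First I would observe that the asymptotics $c_0,c_i$ of $F$ cannot meet this end: there the Reeb field is $C^{-1}\dd_t$ while $\La^*$ lies in the single level $\{t=0\}$, so no chord reaches $\{\tau\geq 0\}$. Hence, setting $S'=(\tau\circ F)^{-1}([0,\infty))$, the restriction $\pi\circ F\colon S'\to\Symp_+(\Gamma)$ is a puncture-free holomorphic curve with $\dd S'=\dd_lS'\cup\dd_nS'$, where $F(\dd_lS')\subset\RR^+_\tau\times\dd\La$, $\tau\circ F\equiv 0$ on $\dd_nS'$, and $(\pi\circ F)^*\l_\Ga|_{\dd_lS'}=0$. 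The folklore maximum principle \cref{PrincMaxSympl} then forces $\pi\circ F$ to be constant on $S'$; a constant component must meet $\dd_nS'$ and so sit at $\tau=0$, and unique continuation against the asymptotics lying in $\{\tau<0\}$ rules out $S'\neq\emptyset$. Thus $\tau\circ F<0$.

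For the \emph{vertical bound}, throughout the completion $M^*\setminus\mr M$ the structure $J$ is $\dd_t$-invariant and sends $\dd_s$ to $R_\l=C^{-1}\dd_t$, so $\langle\dd_s,\dd_t\rangle$ is $J$-invariant; writing $\l=C\,dt+\b$ with $\b$ independent of $s,t$ and using $\l\circ J=ds$, one finds $C\,d(t\circ F)\circ j=d(s\circ F)-F^*(\b\circ J)$. The crucial point is that $\b\circ J=d\psi$ is exact on each end: on $\{\tau\geq0\}$ the identity $\l_\Ga\circ J=d\tau$ gives $\b\circ J=e^\tau\l_\Ga\circ J=d(e^\tau)$, while over $R_\pm$ the adapted (Weinstein/Stein) structure yields $\b_\pm\circ J_\pm=d\phi_\pm$ for the potential $\phi_\pm$. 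Then $C\,d(t\circ F)\circ j=d(s\circ F-\psi\circ F)$, so $t\circ F$ is harmonic wherever $t$ is defined, being the harmonic conjugate of $s\circ F-\psi\circ F$. I would then use that $\La^*\subset\{|t|<1\}$, as $\La$ meets neither $R_+$ nor $R_-$ and its horizontal end lies at $t=0$, so $F(\dd S)$ avoids $\{|t|\geq1\}$; and that no puncture escapes to $t=\pm\infty$, since $R_\l=C^{-1}\dd_t$ has no returning trajectory and hence no chord at vertical infinity. On the harmonic region $\{t\circ F\geq 1\}$ the maximum of $t\circ F$ is therefore attained on $\{t\circ F=1\}$, and symmetrically from below, which bounds $|t\circ F|$; the explicit constant is controlled by the $t$-range of the asymptotics and the geometry, hence depends only on $c_+$, $J$ and $\l$.

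The hard part is the vertical bound over the compact horizontal pieces $R_\pm$: there $t\circ F$ is harmonic only because $\b_\pm\circ J_\pm$ is exact, which is precisely what the adapted Weinstein/Stein structure provides — without it $t$ could violate the maximum principle and the curve might run off to $t=+\infty$. A secondary subtlety is the corner $\{t\geq 1\}\cap\{\tau\geq0\}$ where the two ends overlap, which must be arranged so that the two arguments are compatible; this is guaranteed by the explicit normal form $\l=C\,dt+e^\tau\l_\Ga$ near the suture together with the tailored identities $d\pi\circ J=J_\Ga\circ d\pi$ and $\l_\Ga\circ J=d\tau$.
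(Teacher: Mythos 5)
Your horizontal bound is essentially the paper's argument: project to $\RR^+_\tau\times\Gamma$ using $d\pi\circ J=J_\Ga\circ d\pi$, observe that the asymptotic chords lie in $\mathring V$ (the Reeb field on $\{\tau\ge 0\}$ is $C^{-1}\dd_t$ and $\La^*$ sits in a single $t$-level, so no chords live there), and apply \cref{PrincMaxSympl}. That part is fine.

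The vertical bound, however, has a genuine gap. Your harmonicity of $t\circ F$ rests on the claim that $\b_\pm\circ J_\pm$ is exact over $\hat R_\pm$, and you attribute this to ``the adapted (Weinstein/Stein) structure''. But exactness (indeed even closedness, or a sign on $d(\b_\pm\circ J_\pm)$) of $\b_\pm\circ J_\pm$ is \emph{not} part of the definition of a tailored almost complex structure: \cref{JajustSut} only asks that $J_\pm$ be $d\b_\pm$-compatible on $R_\pm$ and adapted on the cylindrical end. The identity $\b_\pm\circ J_\pm=d\phi_\pm$ is precisely the Stein condition, which is an extra hypothesis (S-sutured), and the paper explicitly warns that a Stein structure on $\hat R_\pm$ need not even lift to a tailored structure because the levels of $\phi_\pm$ may differ from the $\tau$-levels. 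For a general sutured manifold and a general tailored $J$ one only has $\Delta(t\circ F)=\tfrac1C F^*d(\hat\b_\pm\circ J)$ with no control on the sign, so the maximum principle argument collapses. The paper therefore splits the proof: the harmonic-conjugate argument is given only in the S-sutured case, and the general case is delegated to \cite[Lemme 5.18]{CGHH10_Sutures} (\cref{PrincDuT-max}), where the $t$-bound is obtained from a uniform bound on the energy and the topological complexity (genus and number of punctures) of the curve rather than from harmonicity. To repair your proof you must either add the S-sutured hypothesis, or replace the second half by this energy/complexity argument; as written, the constant you produce does not exist for an arbitrary tailored $J$.
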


\begin{proof}
We start by showing that the curve is horizontally bounded (ie in the $\t$-coordinate).The complex structure being tailored to the completion, the projection to $\RR^+_\t \times \Gamma$ is $J_\Ga$-holomorphic. Indeed by setting 
\[S_0 = (\tau \circ F)^{-1}(\RR_+) \ \ \text{ and } \ \ G = \pi \circ F : (S_0,j) \ra  \RR^+_\tau \times \Gamma  \]
where $\pi$ is the projection parallel to $(s,t)$, the condition $dF \circ j = J \circ dF$ implies
\[ dG \circ j = d\pi \circ dF  \circ j =  d\pi \circ J \circ dF = J_\Ga \circ d\pi \circ dF = J_\Ga \circ dG  \]
because $J$ is adapted.

We obtain a $J_\Ga$-holomorphic curve in the symplectisation of $(\Gamma, \l_\Ga)$, satisfying the hypothesis of \cref{PrincMaxSympl}. In particular $\tau \circ F$ has no local maximum, even in the boundary. Since the curve $F$ is asymptotic to chords contained in the interior of $V$, it stays in that interior.

For the vertical direction (the $t$-coordinate), the proof is as in \cite{CGHH10_Sutures}. We quickly present the arguments, starting with a case of a S-sutured contact manifold. \\

{\it The Stein case} : assume that the manifold is $S$-sutured, endowed with an tailored almost complex structure such that its projection to $R_\pm$ is Stein. 

Let $J_\pm$ be the structures induced on $\hat R_\pm$ by the projections parallel to $\dd_s$ and $\dd_t$. Since $(\hat R_\pm, \hat{\beta}_\pm, J_\pm)$ are Stein, there exists functions $\phi_\pm : \hat R_\pm \ra \RR$ strictly plurisubharmonics such that $\hat{\beta}_\pm \circ J_\pm = d\phi_\pm$.
We can now compute
\begin{align*}
\Delta (t \circ F) & = - d d^\CC(t \circ F) = - d(d(F^* t) \circ j) =  -d(F^*(dt \circ J)) \\
 & = -F^*d((\frac{\lambda - \hat{\beta}_\pm}{C}) \circ J) = \frac1C F^*d(\hat \b_\pm \circ J)
\end{align*}
because $\lambda \circ J = ds$. Moreover $\hat{\beta}_\pm \circ J = d\phi_\pm$ : those two 1-forms vanish on $\dd_s$ and $\dd_t$, and coincide on $T\hat R_\pm$ : if $v$ is tangent to $\hat R_\pm$, then $\hat{\b} \circ \bar{J}(v) = \hat{\b} (J_\pm v + * \dd_t) = \hat{\b}(J_\pm v)$.

Hence $\Delta(t\circ F) =0$, in other words $t\circ F$ is harmonic on $F^{-1}(\{|t| > 1 \})$. The maximum principle then forbids the existence of a local maximum, which would exist if the curve was exiting $\{|t| < 1\}$ (remember that the curve is asymptotic to Reeb chords which are in $\mathring V$, hence there is no boundary issue).

\begin{rmk}
A priori a Stein structure on $\hat{R}_\pm$ doesn't lift to a tailored structure on the sutured manifold, because the levels of the plurisubharmonic function $\phi$ can be distinct from the $\t$-level, see \cite[§3.2]{CGHH10_Sutures} for an example. \\
Also note that if the Liouville domain $(W, \b)$ admits an almost complex structure $J_\Ga$ which make it Stein, then $\b \circ J_\Ga$ is exact. However for this proof it is enough to control the sign of this form. 
Hence instead of using the technical result from \cite{CGHH10_Sutures} (which we will outline shortly), it would be enough to find $J_+$ (resp. $J_-$)  such that $d(\b \circ J_+)$ is positive (resp. negative). 
\end{rmk}
~\\

{\it General case} : If the  manifold is not S-sutured, it is shown in \cite{CGHH10_Sutures} that the bound in the $t$-coordinate $t$ depends on the complexity of the curve (ie genus and number of punctures). 
The Legendrian case is handled in the same way, because the Legendrian does not cross $R_\pm$. In what follows we denote by $c_{top}$ the topological complexity of a marked surface  $(S, \pp)$, defined by
\[c_\top (S) = g(S) + |p|\]
 and we set $\bar{t} = t \circ F$, $\bar{\t} = \t \circ F$.

\begin{lem}\cite[Lemme 5.18]{CGHH10_Sutures}  \label{PrincDuT-max} Let $J$ be a tailored almost complex structure, and $F_n : (\Sigma_n, j_n, p_n) \ra (\RR \times V^*, J)$ a sequence of holomorphics curves such that $E(F_n), c_{top}(\Sigma_n)$ and $\bar \tau_n$ are uniformly bounded. Then $\bar t_n$ is uniformly bounded. 
\end{lem}

\end{proof}

\paragraph{Wrapped completion.}
We now consider a Legendrian $\La = \sqcup \La_i \subset (V, \l)$, with a family of Hamiltonians $H_i : \RR^+ \ra \RR$ which we use to construct the wrapped Lagrangian 
\[L = \cup L_i \subset (\RR^+_s \times V, \b = e^s \l)\]\
where 
\[ L_i = \La_i^{H_i} = \{ (s, \phi^{H_i'(s)}_{R}(x)), x \in \La_i  \}  \]
Note that this correspond to wrapping by the Hamiltonians vector fields induce by $H_i$, because $\phi^{H'_i(s)}_R = \phi^1_{X_{H_i}}$.  We also fix $J$ an almost complex structure adapted to the symplectisation. We start by showing that the holomorphic curves satisfy a maximum principle.

\begin{lem}\label{PrincMaxEnrou} 
With the previous notations, let $F = (a, f) : (S,j) \ra (\RR^+ \times V,J)$ be an holomorphic curve without puncture such that 
\[ \dd S = \dd_l S \cup \dd_n S,\ \ F(\dd_l S) \subset L \text{  and  } a(\dd_n S) = 0. \]
Then $F$ is constant.
\end{lem}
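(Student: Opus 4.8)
The plan is to follow the proof of \cref{PrincMaxSympl}, concentrating the new work in the boundary term. Write $a = s\circ F$. Exactly as there, the adapted structure gives $\Delta e^a = F^*\w \geq 0$, so $e^a$ is subharmonic and $a$ admits no interior local maximum; moreover, if $a$ is (locally) constant then $F^*\w = \Delta e^a = 0$, which forces $dF=0$ and hence $F$ constant. The one genuinely new point is that $L$ is no longer cylindrical, so the hypothesis $F^*\l_{\rest\dd_l S}\leq 0$ used in \cref{PrincMaxSympl} is unavailable. The key computation is to restrict $\l$ to the wrapped Lagrangian: parametrising $L_i$ by $(s,x)\mapsto(s,\phi_R^{H_i'(s)}(x))$ and using that the Reeb flow preserves $\l$ while $\l_{\rest T\La_i}=0$, one finds
\[\l_{\rest L_i} = H_i''(s)\,ds = d\big(H_i'(s)\big),\]
so that $F^*\l_{\rest\dd_l S} = d\big(H_i'(a)\big)$ is an \emph{exact} $1$-form along the Lagrangian boundary.

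Next I would run the Stokes argument. Fix $\vphi:\RR_+\ra\RR_+$ with $\vphi(0)=0$ and $\vphi'>0$. Since $F^*d\l\geq 0$ and $F^*(ds\wedge\l)\geq 0$ for the adapted $J$, the identity $d(\vphi(s)\l)=\vphi'(s)\,ds\wedge\l+\vphi(s)\,d\l$ yields
\[0\leq\int_S F^*(\vphi(s)\,d\l) = \int_{\dd_l S}\vphi(a)\,F^*\l - \int_S\vphi'(a)\,F^*(ds\wedge\l),\]
where $\dd_n S$ contributes nothing because $\vphi(a)=\vphi(0)=0$ there. The crucial step is that the remaining boundary integral vanishes: setting $\Phi_i'=\vphi H_i''$ with $\Phi_i(0)=0$, we have $\vphi(a)\,F^*\l = d(\Phi_i\circ a)$ along $\dd_l S$. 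On a boundary component lying entirely in $\dd_l S$ the integral of this exact form is zero; on an arc of $\dd_l S$ the endpoints are corners shared with $\dd_n S$, where $a=0$, so $\Phi_i(a)=\Phi_i(0)=0$ at both ends and the arc again contributes zero. Hence $\int_{\dd_l S}\vphi(a)\,F^*\l=0$, and I note this holds regardless of the sign of $H_i''$, so no positivity of the wrapping is needed.

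It then remains to squeeze. The displayed identity reduces to $\int_S F^*(\vphi(s)\,d\l) = -\int_S\vphi'(a)\,F^*(ds\wedge\l)$ with both sides non-negative, so both vanish. Because $\vphi'>0$ and $F^*(ds\wedge\l)\geq 0$ pointwise, we conclude $F^*(ds\wedge\l)\equiv 0$; evaluating on a local holomorphic frame gives $F^*(ds\wedge\l)(e,je)=da(e)^2+\l(dF(e))^2$, so this forces $da\equiv 0$ and $a$ is constant on each component of $S$. By the interior remark of the first paragraph, $\Delta e^a = F^*\w = 0$, whence $dF=0$ and $F$ is constant. The main obstacle is precisely the boundary term, and the whole argument turns on the observation that $\l$ pulls back to an exact form on each $L_i$ together with the fact that the corners of $\dd_l S$ sit on $\{a=0\}$; everything else is the verbatim symplectisation maximum principle of \cref{PrincMaxSympl}.
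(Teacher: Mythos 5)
Your proof is correct and follows essentially the same route as the paper's: the Stokes computation with a cutoff $\vphi$ vanishing at $0$, the observation that $\l$ pulls back to an exact form $d(h(s))$ on each wrapped component $L_i$ with primitive vanishing at $s=0$ (so the $\dd_l S$ boundary term dies because the corners lie on $\{a=0\}$), and the remark that positivity of the wrapping is not needed. The only difference is cosmetic: you compute the primitive directly as $H_i'$ rather than citing the function $f_{H_i}$ from the completion section, and you spell out the final squeeze via $F^*(ds\wedge\l)(e,je)=da(e)^2+\l(dF(e))^2$ slightly more explicitly than the paper does.
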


\begin{proof}
A priori we can't apply \cref{PrincMaxSympl} because we can't control the sign of $F^* \lambda$ on the boundary. However we can still use the Stokes formula to conclude. We choose $\phi : \RR^+ \ra \RR^+$   increasing, vanishing at $0$, and we compute : 
\begin{align*}
 0  \leq & \int_S F^*(\phi(s) d\lambda) = \int_{\dd S} F^* (\phi(s) \lambda) - \int_S F^*(\phi'(s) ds \wedge \l)  \\
  & =  \int_{\dd_l S} F^*(\phi(s) \l) - \int_S \phi'\circ a . F^*(ds \wedge \l) 
\end{align*}

Since $F$ is $J$-holomorphic and $J$ is tailored, the factor $F^*(ds \wedge \l)$ is positive. By splitting the boundary into $\dd_l S = \sqcup \dd_i S$ such that $F(\dd_i S) \subset L_i$, we get $\l_{\rest T L_i} = df_i(s)$ where $f_i = f_{H_i}$ is positive and  vanish at $0$, as in  \cref{ssec.Completions} (see also \cref{dfn.LagrExact}). We then  compute :

\[ \int_{\dd_i S} G^*(\phi\l) = \int_{\dd_i S} G^*(\phi f_i' ds) = \int_{\dd_i S}  d(\psi_i \circ G) \]
where $\psi_i(s)$ is a primitive of $\phi f'_i$ vanishing at $0$. 
Since the curve has no puncture, $\dd(\dd_i S) \subset \{s=0\}$ and so the first integral vanishes.
 \end{proof}

\begin{rmk} The crucial point in the previous computation is that the wrapping only depends on the $\t$-coordinate. More precisely, the projection of the Legendrian to $\RR_\t \times \Gamma$ is an exact Lagrangian, and the primitive of restriction of the Liouville form is a function of $\t$. Also note that we did {\it not} use the positivity of the wrapping. 
\end{rmk}

\begin{cor}
Let $\Lambda_0, \Lambda_1$ be two sutured Legendrians in a sutured manifold $V$, $H : \RR^+ \ra \RR^+$ a smooth increasing function vanishing on a neighbourhood of $0$, and $F \in \M(c_0; c_i; \Lambda^H_0 \cup \Lambda_1^*)$ an holomorphic curve. Then $\sup \t \circ F \leq \max(0, \t(c_0))$, and $|t\circ F| \leq C(c_0, c_\top)$.
\end{cor}

\begin{proof}
For the $\t$-direction, the idea is as previously : after restriction and projection we obtain a curve $G : S \ra \RR^+ \times \Gamma$, where the boundary of the surface splits into $\dd S = \dd_n S \cup \dd_0 S \cup \dd_1 S$ such that 
\[ \t(\dd_n S)=0,\ \ G(\dd_0 S) \subset \Lambda_0^H, \ \ G(\dd_1 S) \subset \Lambda_1^*  \]
The previous maximum principle then implies that $\sup \bar{\t} \leq \max(0, \t(c_0), \t(c_i))$.

We now show that for any negative puncture $c_i$, we have 
$\t(c_i) \leq \t(c_0)$. 
Assume it is not the case : we would have, 
after restriction and projection, a $J_\Ga$-holomorphic curve 
\[U_0 = (a, u) : (S, j) \ra (\RR^+_\t \times \Gamma, J_\Ga)\]
with a negative puncture, and  such that $\dd S = \dd_0 S \cup \dd_n S \cup \dd_1 S$ and $U_0 (\dd_0 S) \subset \pi_\Gamma (\La_0^H)$, $a(\dd_n S)=0$, $U_0 (\dd_1 S) \subset \La^*_1$.

We now compute, while being careful about the orientation of the boundary (see 
\cref{fig.orient_bord}) :

\begin{figure}[h!]
\center
\def\svgwidth{11cm} 
\import{./Dessins/}{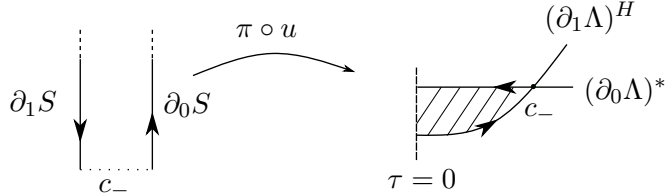}
\caption{Orientation of the boundary near a negative puncture}
\label{fig.orient_bord}
\end{figure}

\begin{align*}
0 \leq & \int_S U^* \phi(\t)d\l_\Ga = \int_{\dd S} U^* \phi(\t) \l_\Ga - \int_S U^* (\phi' d\t \wedge \l_\Ga ) \\
 & = \int_{\dd_0 S} U^*(\phi \l_\Ga)  - \int_S U^* (\phi' d\t \wedge \l_\Ga )  \leq 0
\end{align*}

Indeed 
\[\int_{\dd_0 S} \phi. \l_{\Ga \rest T\La_1^H} = \int_{\dd_0 S} U^*( \phi d\psi (\t)) = \int_{U(\dd_0 S)} \phi \psi' d\t \leq 0\]
because $\psi$, primitive of $ (\l_\Ga)_{\rest T\La_1^H}$ vanishing at $0$, is a positive and increasing function of $\t$, see \cref{ssec.Completions} (this time the positivity of the wrapping matters).

For the $t$-direction, the proof is unchanged.
\end{proof}

\section{Sutured Legendrian homologies}\label{sec.Sutured_homologies}

\subsection{Definition of the invariants} \label{ssec.DfnInvariants}

For simplification, we assume that there is an hypertight\footnote{If it is not the case, one need to add the contractible Reeb orbits to the algebra (note that it is also possible to consider all of them).} contact form $\l$, such that $\La$ is non-degenerate. We also pick $J$ a tailored almost complex structure. 

To further simplify the construction, we also assume 
\begin{itemize}
    \item $H_1(V)$ is free ;
    \item the first Chern class $c_1(\xi)$ vanishes.
\end{itemize}
More general constructions can be considered, see \cite{EES05_LCH_PxR} \cite[§2.3]{EENS11_Knot_hom} as well as \cite{Bou03_Intro_contact} and \cite{Par15_CH_&_virtual}.

\subsubsection{Cylindrical sutured homology} \label{sssec.Cylindrical_sutured_homology}

Consider a sutured Legendrian $\La \subset (V, \l, \Ga)$, and assume that it is relatively hypertight (in the general case we get a dga $\LC(\La)$, see \cref{ssec.dga_wrapped}).

\paragraph{Generators.}  We define the {\it sutured cylindrical complex} $LC(\La; V, \l, \Ga, J)$ as the $\ZZ[H_1(\La)$-bimodule generated by Reeb chords $\C(\La; V, \l)$.

An homological element is graded by the opposite of its Maslov class.
To define the degree of the Reeb chords, choose : a base point $p_i$ for any connected component of $\La$ ; for all chords, a path in $\La$ from its endpoints to the relevant base point $p_i$ ; a path between $p_i$ and $p_j$ for any $i \neq j$, as well as a path of Lagrangians in $\xi$ from $T_{p_i} \La$ to $T_{p_j} \La$.
With this data, we can construct for any chord $c$ a loop $\g_c$, on which we trivialise $\xi$ (here we need to pick generators of $H_1(V)$, as well as a trivialisation of $\xi$ over each of them). Thus we get a path a Lagrangians, whose extremities are transverse, that we close by a "positive rotation" to get a loop of Lagrangians $\hat \g_c$ (see \cite{EES02_LCH_R2n+1}). The degree of $c$ is then defined by 
\[ |c| = \mu(\hat \g_c) -1,  \]
where $\mu$ is the Maslov class of the loop. Note that a different choice of data may change the degrees, however the $\ZZ_2$-grading is well-defined.

\paragraph{Differential.}
The differential of a chord will count holomorphic strips in the symplectisation of $(V^*,\l^*)$, with Lagrangian boundary condition.
Nevertheless we define moduli spaces of curves negatively asymptotic to several chords, as it will be used later.
\begin{dfn}[Moduli spaces in a cobordism]
Let $L \subset (W, \b)$ be an exact Lagrangian cobordism from $\La^+ \subset (V^+, \l^+)$ to $\La^- \subset (V^-, \La^-)$, and $J$ an adapted complex structure. Then for $c \in \C(\La^+)$ and $w=q_0 c_1 q_1 ... c_r q_r$ a word of chords $c_i \in \La^-$ and homological elements $q_i \in H_1(L)$, we define $\M (c, w; W, L, J)$ the space of $J$-holomorphic curves $F= (a, f): (S, j) \ra (W, J)$, up to reparametrisation of the domain, such that 
\begin{itemize}
\item $S = D\setminus \{z_0, ..., z_r \}$ where the $z_i \in \dd D$ are ordered in the direct order ;
\item $f(\dd D \setminus \{z_i\}) \subset L$ ;
\item $f((z_i, z_{i+1})) =  q_i \in H_1(L)$ (here we need to use the capping paths) ;
\item $F$ is positively asymptotic to $c_0$ at $z_0$ and negatively asymptotic to $c_i$ at $z_i$ (for $i \geq 1$). In other words, $a \under{\ra}{z_0} +\infty$, $a \under{\ra}{z_i} - \infty$ for $i \geq 1$, and there exists local holomorphic coordinates $(x, *)$ near those punctures such that $f(x, .) \under{\ra}{z_i} c_i(.)$ in $\C^\infty ([0, 1], V)$.
\end{itemize}
\end{dfn}

For a symplectisation, we also have to quotient the curves by $\RR$-translation on the target :
\begin{dfn}[Moduli spaces in a symplectisation]  Let $(V, \lambda)$ be a contact manifold, $J$ an adapted complex structure on its symplectisation, and $\Lambda \subset V$ a Legendrian. Then for $c \in \C(\La)$ and $w=q_0 c_1 q_1 ... c_r q_r$ a word of chords $c_i \in \La$ and homological elements $q_i \in H_1(\La)$, we define $\M (c, w; V, \La, J)$ the space of $J$-holomorphic curves $F= (a, f): (S, j) \ra (\RR_s \times V, J)$, up to reparametrisation of the domain {\it and $\RR_s$-translation}, such that 
\begin{itemize}
\item $S = D\setminus \{z_0, ..., z_k \}$ where the $z_i \in \dd D$ are ordered in the direct order ;
\item $f(\dd D \setminus \{z_i\}) \subset \Lambda$ ;
\item $f((z_i, z_{i+1})) =  q_i \in H_1(L)$ (once again we need to choose capping paths) ;
\item $F$ is positively asymptotic to $c_0$ at $z_0$ and negatively asymptotic to $c_i$ at $z_i$ (for $i \geq 1$). \end{itemize}
\end{dfn}

By \cite{BEHWZ03_Compactness_SFT}, the energies of a holomorphic curve (in either a Liouville cobordism or a symplectisation) positively asymptotic to a chord $c_0$, negatively asymptotic to chords $c_i$, and with no other puncture, are given by
\begin{align*}
    & E_{d\lambda}(F) = \A(c_0) - \sum_{i\geq 1} \A(c_i) 
    & E_\lambda(F) = \A(c_0).
\end{align*}

Building upon Gromov's result \cite{Gro85_Hol_curves}, those moduli space can be compactified by adding {\it buildings} of holomorphic curves, see \cite[§7.2]{BEHWZ03_Compactness_SFT}. More precisely, one need to add broken curves modelled on nodal surfaces obtained by degeneration from $S$. For our construction it is enough to consider one-dimensional moduli spaces, which only break into buildings with two levels. 

Thus we define, for $c_0 \in \C_*(\La)$ and $|w| = * -2$, the {\it compactified moduli space} 
\[ \Mbar(c_0, w; V,\La,J) = \M(c_0; w) \cup \bigcup_{\substack{c \in \C_{* - 1}(\La) \\ w_1w_-w_2 = w }} \M(c_0;  w_1 c w_2) \times \M(c; w_-).    \]

For more general asymptotics (without those grading restriction), one need to consider buildings with several levels. \\
In the case of a cobordism $W$, the moduli space must be compactified by adding holomorphic buildings $k_+|1|k_-$, modeled on nodal curves with $k_\pm$ levels in $\RR \times \dd_\pm W$, and one level in $W$ itself. 

\begin{thm}[\cite{Gro85_Hol_curves} \cite{BEHWZ03_Compactness_SFT}] If the holomorphic curves of a moduli space $\M(c, w; V,J)$ are contained in a compact of $V$, then the compactified moduli space $\Mbar(c, w; V,J)$ is indeed compact. \\
Moreover the same statement holds for holomorphic curves in a cobordism.
\end{thm}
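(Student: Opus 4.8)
The plan is to reduce the statement to the classical SFT compactness theorem of \cite{BEHWZ03_Compactness_SFT}, the only subtleties being the non-compactness of $V^*$ and the Lagrangian boundary condition on $\RR \times \La^*$; the compact-containment hypothesis is precisely what makes this reduction possible. First I would establish uniform energy bounds. Since every curve in $\M(c, w; V, J)$ has the same prescribed positive asymptotic $c_0$ and the same negative asymptotics encoded by $w$, the energy identities recalled above give
\[ E_{d\l}(F) = \A(c_0) - \sum_{i \geq 1} \A(c_i), \qquad E_\l(F) = \A(c_0), \]
so that $E(F)$, and in particular the $\w$-area, is bounded by a constant depending only on $(c_0, w)$.

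Next I would use the hypothesis that all curves lie in a fixed compact $K \subset V$, together with the maximum principles of the previous section (notably \cref{PrincMaxSut}), which confine the images in the $\t$- and $t$-directions of the completion. After projecting out the symplectisation coordinate $s$, the images therefore lie in a genuinely compact region of $V^*$. Combined with the bounded geometry of the boundary condition $\RR \times \La^*$ (which is cylindrical at infinity) and the monotonicity estimate \cref{prop.Monotonicite_global}, this yields a priori $C^0$ and gradient bounds on compact subsets of the domain away from the punctures, exactly the input required by \cite{BEHWZ03_Compactness_SFT}.

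I would then invoke the SFT compactness theorem on this compact region: any sequence $F_n \in \M(c, w; V, J)$ admits a subsequence converging, in the sense of \cite[§7.2]{BEHWZ03_Compactness_SFT}, to a holomorphic building whose levels are holomorphic curves in the symplectisation (and, in the cobordism case, one level in $W$ together with levels in the symplectisations $\RR \times \dd_\pm W$). The Lagrangian boundary condition passes to the limit via the boundary version of the compactness theorem, and both Reeb-chord breaking and boundary-disk bubbling contribute only the building components already appearing in the definition of $\Mbar$. Since the curves remain in $K$ and the total energy is bounded, no energy is lost in the limit, so the limiting building is itself an element of $\Mbar(c, w; V, J)$; the cobordism case is identical once one uses the $\RR$-coordinate at infinity in each symplectisation end.

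The main obstacle is the second and third steps: one must verify that the compact-containment hypothesis, which constrains curves living in the non-compact $V^*$, genuinely reduces the problem to the classical compact setting, i.e.\ that no component of a limiting building and no bubble can escape through the Liouville or contactisation ends of the completion. This is guaranteed by the maximum principles, which forbid curves (and hence their limits) from crossing the horizontal regions $R_\pm$ or escaping to $\t = +\infty$, so that the whole building remains within a fixed compact neighbourhood on which \cite{BEHWZ03_Compactness_SFT} applies verbatim.
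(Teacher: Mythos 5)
The paper offers no proof of this statement: it is imported verbatim from \cite{Gro85_Hol_curves} and \cite{BEHWZ03_Compactness_SFT}, with the compact-containment hypothesis isolated precisely so that those references apply. Your outline is a correct account of why they do (uniform energy bounds from the fixed asymptotics, $C^0$ confinement, SFT convergence to buildings with Lagrangian boundary conditions), and matches the paper's intent; the only small remark is that the maximum principles of \cref{sec.Max_princ} are what the paper uses to \emph{verify} the compact-containment hypothesis in each application, not ingredients of the theorem itself, so invoking them inside the proof slightly conflates the general statement with its use --- though you do need some such uniform bound to keep the limit building, and the curves of the boundary strata of $\Mbar$, inside the same compact set.
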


The virtual dimension of a moduli space $\Mbar(c, w; V,J)$ is given by 
\[\dim \M(c_0; w; \La) = |c_0| - \under{\sum}{i\geq 1} |c_i| -1,\]
and for curves in a cobordism the $-1$ disappears. This number can also be seen as the Fredholm index of some operator, thus it does not depends on the choice made to define the grading (when the Chern class vanishes). Finally,  for a generic $J$ those moduli spaces are manifolds, and their dimension is equal to the virtual one, see \cite{Diri16_Leg_surg}.

We can now define the differential of a chord 
\[ \dd c^+ = \under{\sum}{|c^+| = |c^-|-1} \# \M(c^+,q_0 c^- q_1; V^*, \l^*, \La_0^*, \La_1^*, J)\ q_0 c^- q_1.  \]
where $q_0,q_1 \in H_1(\La)$.
By \cref{PrincMaxSut} the curves stay in a compact (for fixed asymptotics), so Gromov's compactness result holds and this is indeed a differential.

\begin{rmk}
By the same argument the differential of the Chekanov-Eliashberg dga is well defined, see \cref{ssec.dga_wrapped} for more details.
\end{rmk}

\paragraph{Relative homology.} We can also consider a pair of sutured Legendrians $\La_0, \La_1$, which we again assume hypertight\footnote{If it is not the case, we get a $\LC(\La_0)-\LC(\La_1)$-bimodule.}. Then the {\it relative sutured complex} is generated by Reeb chords going from $\La_0$ to $\La_1$, and the differential counts holomorphic strips in the symplectisation of $V^*$, with boundary on $\RR\times \La_i^*$. The resulting complex is a $\ZZ[H_1(\La_0)]-\ZZ[H_1(\La_1)]$-bimodule, denoted $LC(\La_0, \La_1; V, \l, \Ga, J)$.

\subsubsection{Wrapped sutured homology}

Consider two disjoint sutured Legendrians $\La_0, \La_1 \subset (V, \l, \Ga)$, as well as a tailored complex structure $J$. For simplicity, we once again assume that $\l$ is hypertight, and that both Legendrians are relatively hypertights.

Now choose an Hamiltonian $H : \RR^+ \ra \RR$, inducing a positive and total wrapping. 
As described in \cref{ssec.Completions}, we obtain a non-compact Legendrian completion $\La_0^H$, and by construction
\[\C(\La_0^H, \La_1^* ; V^*, \l^*) = \C(\La_0, \La_1; V, \l) \cup \C(\dd\La_0, \dd \La_1; \Ga, \l_\Ga), \]
and we will call {\it interior} (resp. {\it exterior}) {\it chords} the elements of the first (resp. second) subset. Note that the interior chord are all in $\mr V$, while the exterior chords are contained in $\{\t > 0\}$.

We define $\W LC (\La_0, \La_1; V, \l; J, H)$ as the $\ZZ[H_1(\La_0)]-\ZZ[H_1(\La_1)]$-bimodule generated by Reeb chords $\C(\La_0, \La_1; V, l)$, graded as previously and whose differential counts holomorphic curves 
\[ \dd c^+ = \under{\sum}{|d| = |c|-1} \# \M(c^+,xc^-y; V^*, \l^*, \La_0^H, \La_1^*, J)\ xc^-y.  \]

By \cref{PrincMaxEnrou} the curves again stay in a compact (for fixed asymptotics), so Gromov's compactness result holds and this is indeed a differential.

\begin{rmk}
We could also define a Rabinowitz complex by adding chords going from $\La_1$ to $\La_0$. The differential of such a chord $c$ would count chords {\it negatively} asymptotic to $c$, as well as bananas, see \cite{Alb12_Rabinowitz-Floer}.
\end{rmk}

\begin{rmk} We could also wrap $\La_1$ negatively, to get a quasi-isomorphic complex. \\
We can also work with uncentered sutured Legendrians, in which case one need to ensure that the completion make $\dd \La_0$ go below $\dd \La_1$, which is always possible if we start the wrapping far enough (in the $\t$-direction).
\end{rmk}

\subsection{Other points of view}
We present some others constructions geometrically similar, but whose implementation is more difficult. \\

{\it Wrapping the contact form.} We can perturb $\dd_2 \Lambda$ to make it $-\e$-centered (ie a neighbourhood is $(-1,0]_\t \times \{t=\e\} \times \dd_2 \La$), and complete it cylindrically. We now extend the contact form to the completion $V^*$ by $f dt + e^\t \l_\Ga $ where $f$ is an increasing function such that $e^{-\t}f' \ra \infty$. The Reeb vector field is now directed by $\dd_t - e^{-\t}f'R_\Ga$, so Reeb chords appears in $V^* \sms V$, in bijection with $\C(\dd_1 \La \ra \dd_2 \La; \l_\Gamma)$. One must now prove that the differentials coincident, however the projection of an holomorphic curve to $\RR^+\t \times \Ga$ is {\it not} $J_\Ga$-holomorphic anymore. \\ 

{\it Reduced energy}.
We could also work with cylindrically completed Legendrians, using the usual contact form. The differential should now count more general curves, whose energy might be infinite, but with finite "reduced energy", defined by integrating $\phi(\t)d\t \wedge \l_\Ga$ on $V^* \sms \mr V$, where $\phi$ is a function on $\RR^+$ with finite integral. The chords of the suture now appear at infinity (in the $\t$-direction), however the proper way of compactifying the moduli spaces is not straightforward. Note that with this construction, the Reeb orbits of $(\Gamma, \l_\Ga)$ also appear, which was not the case previously. \\

{\it Counting Floer curves.} Alternatively we could also count curves satisfying a Floer equation. Indeed Floer strips between two Lagrangians, asymptotic to 1-periodic trajectories of an Hamiltonian vector field $X_H$, correspond to pseudo-holomorphic curves between $L$ and $\vphi_H^{-1}(L')$, asymptotic to intersection points. In that Hamiltonian framework, the maximum principle is proved by \cite{AbSe09_Open_string_analogue} (see also \cite{Abo10_Generating_Fuk}), and this theory is equivalent to the one we defined by \cite[Remarque 1.10]{Aur13_Intro_Fuk}. \\

{\it Cancelling Reeb orbits.} Finally, one could make appear the Reeb chords (and orbits) of the dividing set, by creating two families of Reeb orbits in cancelling position, as in \cite{CGHH10_Sutures}. This perspective is more adapted to the definition of a wrapped Chekanov-Eliashberg dga, and will be made more precise in \cref{ssec.dga_wrapped}.

\subsection{Proof of the invariance}\label{ssec.Proof_invariance}

In this section we show that the sutured Legendrian homologies don't depend on some choices we made along the way. 
\subsubsection{Cylindrical sutured homology }

\begin{thm} \label{InvrtCyl}  Let $(V, \N_0(\Gamma), \xi)$ be a sutured contact manifold and $\Lambda$ cylindrical Legendrian totally  non-degenerate. Then $LH(\Lambda; V, \N_0(\Gamma), \l, J)$ does not depend on the choice of adapted contact form, nor on the tailored almost complex structure. \\
Let $(V, \N_0^u(\Gamma), \xi^u)$ be a path of sutured contact manifolds, and  $\Lambda^u$ a path of cylindrical Legendrians $\dd$-non-degenerates such that $\Lambda^0$ and $\Lambda^1$ are totally non-degenerate. Then there is a quasi-isomorphism $L C(\Lambda^0, \l^0) \ra LC(\Lambda^1, \l^1)$, which only depends (up to homotopy) on the homotopy class of the path $(\xi^u, \Lambda^u)$.
\end{thm}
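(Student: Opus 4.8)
The plan is to follow the standard SFT/contact-homology strategy of constructing a cobordism map from a Liouville cobordism interpolating between the two data sets, showing it is a chain map, constructing a reverse map, and using a homotopy argument to see the two compositions are homotopic to the identity. The key technical inputs — Gromov compactness for curves with boundary in the non-compact sutured setting — are precisely the maximum principles already established (\cref{PrincMaxSut}, \cref{PrincDuMaxCobord}, \cref{PrincMaxEnrou} and \cref{PrincDuT-max}), so the novelty here is organizational rather than analytic.

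\textbf{Step 1: From the isotopy to a cobordism with exact Lagrangian.} Given the path $(\xi^u, \La^u)$, I would first apply the construction of \cref{ssec.ConstrCobordLagr}: the Legendrian isotopy $\La^u \subset (V, \l^u)$ induces an exact Lagrangian cobordism $L \subset (W, \b) = (\RR_s \times V^*, e^s \l^u_{\psi(s)})$ going from $\La^1 \subset (V, C\l^1)$ to $\La^0 \subset (V, \l^0)$, for $C$ large and the isotopy sufficiently slowed down. Since the almost complex structures are interpolated through the contractible space of admissible structures, I would pick $J$ adapted to this cobordism (in the sense of \cref{ssec.Struct_p-complexe}) restricting to tailored structures $J^0, J^1$ at the two ends. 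Here I must be careful that the cobordism is itself ``sutured at infinity'': the horizontal and $\t$-directions must still be controlled, so the completion $V^*$ is used and the structure is arranged $\dd_t$-invariant near $M^* \sms \mr M$.

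\textbf{Step 2: The cobordism map and compactness.} Define
\[ \Phi : LC(\La^0, \l^0) \ra LC(\La^1, \l^1), \qquad \Phi(c^+) = \under{\sum}{|c^+| = |w|}  \# \M(c^+, w; W, L, J)\, w, \]
counting rigid holomorphic curves in the cobordism with boundary on $L$. The crucial point is that this count is finite and $\Phi$ is well-defined: by \cref{PrincDuMaxCobord} (applied in the isotopy cobordism) together with the horizontal and vertical bounds of \cref{PrincMaxSut} and \cref{PrincDuT-max} extended to the cobordism, the relevant curves stay in a compact set for fixed asymptotics, so Gromov--BEHWZ compactness applies. That $\Phi$ is a chain map follows by examining the boundary of the one-dimensional moduli spaces $\Mbar$, which break into buildings $k_+|1|k_-$; the codimension-one strata give exactly $\dd^1 \circ \Phi - \Phi \circ \dd^0$, so the identity $\Phi \dd^0 = \dd^1 \Phi$ holds.

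\textbf{Step 3: Reverse map, homotopy, and the homotopy-class statement.} Running the same construction for the reverse isotopy gives $\Psi: LC(\La^1) \ra LC(\La^0)$. To see $\Psi \circ \Phi \simeq \Id$ I would glue the two cobordisms into a cobordism from $(V, \l^0)$ to itself realizing a \emph{homotopy} of isotopies to the constant path; a second, parametrized moduli space (a one-parameter family of cobordism structures interpolating the glued cobordism to the trivial symplectization) yields the chain homotopy $K$ with $\Psi\Phi - \Id = \dd K + K \dd$, again via the boundary analysis of its one-dimensional strata. The same parametrized argument, applied to a homotopy between two homotopic paths $(\xi^u,\La^u)$, shows that the induced $\Phi$ are chain homotopic, giving the final assertion that the quasi-isomorphism depends only on the homotopy class of the path rel endpoints. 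The first part of the theorem (independence of the choice of adapted form and tailored $J$) is the special case of the constant isotopy, where $\Phi$ is a quasi-isomorphism induced by the trivial cobordism.

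\textbf{Main obstacle.} I expect the principal difficulty to be transversality and compactness for the \emph{parametrized} moduli spaces in Step 3, in the non-compact sutured setting: one must ensure that as the cobordism structure varies the maximum principles continue to confine curves uniformly (the bound in \cref{PrincDuT-max} depends on energy and topological complexity, both of which must stay uniform along the family), and that no curve escapes into the wrapped/horizontal ends. Establishing this uniform confinement — rather than the formal homological algebra — is where the real work lies.
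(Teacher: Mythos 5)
Your overall strategy (cobordism map, chain-map identity from one-dimensional moduli spaces, reverse map and chain homotopy from gluing, parametrized families for homotopy invariance) is exactly the paper's, and your Step 2--3 reproduce the paper's first and third steps. However, there is a genuine gap where you wave at the ``sutured at infinity'' issue in Step 1: when the path $(\xi^u, \La^u)$ actually changes the data on the horizontal end $\{\t \geq 0\}$ --- in particular when the boundary $\dd \La^u$ moves inside $(\Ga, \l_\Ga)$, or when $\l^u_\Ga$ and $J^u_\Ga$ vary --- the interpolating Lagrangian cobordism $L \subset \RR_s \times V^*$ is no longer $\t$-cylindrical near horizontal infinity, and none of \cref{PrincMaxSympl}, \cref{PrincMaxSut} or \cref{PrincDuMaxCobord} applies as stated to confine curves in the $\t$-direction. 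Arranging $\dd_t$-invariance only controls the vertical coordinate; the horizontal confinement, which is the actual analytic content, is left unaddressed. You locate the ``real work'' in the parametrized moduli spaces of Step 3, but the paper locates it one step earlier.

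The paper resolves this with an intermediary sutured manifold $(\tilde V, \tilde \l, \tilde J)$: using \cref{PrincDuMaxCobord} one builds an exact Lagrangian cobordism $L \subset \RR^+_\t \times \Ga$ realizing the boundary isotopy (interpolating between $(\dd\La^0, \l_\Ga^0, J_\Ga^0)$ and $(\dd\La^1, C\l_\Ga^1, J_\Ga^1)$) together with an almost complex structure for which curves with boundary on $L$ admit no $\t$-maximum; one then grafts this data onto $\{\t \geq 0\}$ while keeping $\l^0, J^0$ on $\{\t < 0\}$. Since all asymptotic chords lie in $\{\t < 0\}$ and the maximum principle confines curves there, the resulting complex is \emph{equal} to $LC(\La^0; \l^0, J^0)$, and the remaining isotopy from $\tilde\La$ to $\La^1$ is fixed on $\{\t \geq 0\}$, reducing to the case your Step 1--2 actually covers. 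Without this reduction (or an equivalent mechanism), your cobordism map $\Phi$ is not known to be well defined for a general path, and the same issue infects your Step 3. You should either insert this intermediary-manifold step or supply a direct horizontal maximum principle for cobordisms whose Lagrangian boundary condition is non-cylindrical at the horizontal end.
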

The geometric content of the proof still holds for the Chekanov-Eliashberg dga, however the algebraic statement of the invariance is more involved, see eg \cite[Lemma 5.6]{EkOa15_Sympl_Cont_dga}.

\begin{rmk}
In particular, the choice of coordinates on $\N_0(\Gamma)$ does not matter. Indeed the space of choices is contractible, and any path of coordinates functions induces a isotopy of adapted contact forms.
\end{rmk}

\begin{proof}
We adapt the proof of \cite{CGHH10_Sutures}. It is enough to prove the second part of the theorem, which implies the first point. 
We choose a family of almost complex structures $J^u$ tailored to $(V^*, \l^u)$ (note that the space of tailored structure is contractible).\\

{\it First step} :  We start by assuming that $\l^u$, $J^u$ and $\Lambda^u$ are independents of $u$ on the area $\{\t \geq 0\}$, in which case the proof goes as in the usual compact situation. Consider the cobordism  
\[ (\RR_s \times V^*, \w=d(e^s \l^{\phi(s)} ), \tilde{J} = J^{\phi(s)} ) \]
where $\phi : \RR \ra [0,1]$ is a smooth decreasing function such that $\phi(s) = 1$ for $s \leq -N$ and $\phi(s) = 0$ for $s \geq N$, with $N$ big enough. If $|\phi'|$ is small enough, $\w$ is symplectic and $\tilde{J}$ is adapted. 
Moreover there exists an exact Lagrangian cobordism $L \subset (\RR \times V^*)$ interpolating between the two Legendrians, and coinciding with $\RR \times \{0\} \times \RR^+ \times \dd \La$ on $\{\t > 0\}$.

We then define a map
\[ \varPhi : L C(\Lambda^0, \l^0, J^0) \ra  L C(\Lambda^1, \l^1, J^1) \]
by counting rigid holomorphic curves, with one positive asymptotic and with boundary in $L$, whose projections to $M^*$ stay in a compact.

Indeed \cref{PrincMaxSut} bound the curves horizontally : since $\tilde{J}$ is $s$-invariant near the boundary, the curves stay in $\{\t \leq 0\}$. Moreover if we have a sequence of curves $F_n$ (with fixed asymptotics and topology) as well as $z_n \in \dot{\Sigma}$  such that $t \circ F_n(z_n) \ra \infty$, then similarly to \cref{PrincMaxSut} there would exist a holomorphic curve of finite energy in $\RR \times \RR \times R_\pm$, which is impossible because there is no Reeb orbit in that area. 

The usual argument, which relies on the compactification of those curves, implies that $\Phi$ is a chain map, and that there exists an homotopical inverse.
Indeed the curves in the cobordism break into holomorphic buildings, and by looking at 1-dimensionnal moduli spaces we get the equality $\Phi \dd_0 - \dd_1 \Phi =0$. Similarly, the cobordism from $\l^1$ to $\l^0$ gives a morphism in the opposite direction $\Psi : LC(\Lambda^1, \l^1, J^1) \ra  LC(\Lambda^0, \l^0, J^0)$.
 
Moreover the composition $\Psi \circ \Phi$ can be recovered by counting curves in the cobordism obtained by gluing the two previous cobordisms. Since this new cobordism is homotopic to the trivial one, the study of the 1-dimensionnal moduli spaces of curves show that there exists a map $H$  such that $\Psi \circ \Phi - Id = H \dd_0 - \dd_1 H$.\\

{\it Second step} : We now study the case where $J^u$, $\l^u$ and $\Lambda^u$ change on $\{\t \geq 0\}$. 
We will construct an intermediary sutured contact manifold $(V, \N_0(\Gamma), \tilde{\l}, \tilde{J})$ so we can compare the two complexes.
According to \cref{PrincDuMaxCobord}, there exists a Liouville form $\tilde{\b}_\Ga$, $\tilde{J}_\Ga$ an almost complex structure on $\RR_\t \times \Gamma$, and $L \subset \RR^+ \times \Ga$ an exact Lagrangian interpolating between $(\dd \La^0, \l_\Gamma^0,J_\Gamma^0)$ and $(\dd \La^1, C \l_\Gamma^1,J_\Gamma^1)$, and  such that the $\tilde{J}_\Ga$-holomorphic curves with boundary in $L$ do not present a $\t$-maximum.

We then define $(\tilde{\l}, \tilde{J})$ by
\begin{itemize}
\item $\tilde{\l} = \l^0$ on $\{\t <0\}$
\item $\tilde{\l} = dt + \tilde{\b}_\Ga$ for $\t > 0$ 
\item $\tilde{J} = \tilde{J}_\Ga$ on $\ker \tilde{\b}_\Ga$
\item $\tilde{J} (\dd_a) = \dd_t$
\item $\tilde{J} (\dd_\t) = \tilde{J}_\Ga\dd_\t + \mu \dd t \in \ker\tilde{\l}$, where  $\mu = - \tilde{\l}_\Ga (\tilde{J}_\Ga \dd_\t)$.
\end{itemize}  
The manifold $(\tilde{V}=\{\t < S, |t| \leq 1\}, \tilde{\l})$ is sutured, $\tilde{J}$ is a tailored structure and $\tilde{\Lambda}$, Legendrian lift of $L$, is a cylindrical Legendrian totally non-degenerate. 

Moreover $\tilde{J}$ lifts the structure $\tilde{J}_\Ga$ : \[d\Pi \circ \tilde{J} = \tilde{J}_\Ga \circ d\Pi,\]
where $\Pi$ is the projection on $\RR^+_\t \times \Ga$, hence any  $\tilde{J}$-holomorphic  curve projects to a  $\tilde{J}_\Ga$-holomorphic curve. In particular, by \cref{PrincDuMaxCobord} they stay in the zone $\{\t < 0\}$.
We then get
\[ L C(\Lambda^0; V, \N_0(\Gamma), \l^0, J^0) = L C(\tilde{\Lambda}; \tilde{V}, \tilde{\l}, \tilde{J})  \]
as complexes, because generators and differentials correspond.

Finally there exists a path between $\tilde{\Lambda} \subset (\tilde{V}, \tilde{\l}, \tilde{J})$ and $\Lambda^1 \subset (\tilde{V}, \l^1, J^1)$ which is fixed at the boundary, and the previous step yields a quasi-isomorphism beteween the associated dgas.\\

{\it Third step} : We now show that two homotopic paths induce the same isomorphism (in homology). 
We start by recalling the usual case,  where $V$ is compact without boundary : we can glue the induced cobordisms to obtain a cobordism from $\Lambda^0 \subset (V, \l^0, J^0)$ to himself. Since both paths are homotopic, this cobordism is homotopic to the trivial one. By counting holomorphic curves in this cobordism we get a morphism $F : LC(\Lambda^0; \l^0, J^0) \ra LC(\Lambda^0; \l^0, J^0)$ homotopic to the identity, meaning that there exists $H : LC_*(\Lambda^0; \l^0, J^0) \ra LC_{* + 1}(\Lambda^0; \l^0, J^0)$ such that $F - Id = H \dd + \dd H$, hence $H_*(F)= Id$. Moreover $F$ can also be obtained as the composition of $F_0$ and $G_1$, where $F_0 : LC(\Lambda^0; \l^0, J^0) \ra LC(\Lambda^1; \l^1, J^1)$ is induced by the first isotopy and $G_1 : LC(\Lambda^1; \l^1, J^1) \ra LC(\Lambda^0; \l^0, J^0)$ by the reversed second isotopy. Hence we get $Id = H_*(F) = H_*(G_1) \circ H_*(F_0)$ and by composing with $H_*(F_1)$ we indeed have $H_*(F_1) = H_*(F_1) H_*(G_1) H_*(F_0) = H_*(F_0)$ (the composition $F_1 \circ G_1$ can also be obtained by gluing a cobordism to its opposite, so this map is homotopic to the identity).

For sutured manifolds, the algebraic argument is the same, although we now need to check that the holomorphic curves stay in a compact. We start by choosing contact forms adapted to $\xi^0$ and $\xi^1$, as well as tailored almost complex structures. As previously, we can construct a sutured contact manifold $(\tilde{V}, \tilde{\l}, \tilde{J})$ with a Legendrian $\tilde{\Lambda}$  such that 
\begin{itemize}
\item $LC(\tilde{\Lambda}; \tilde{V}, \tilde{l}, \tilde{J}) = LC(\Lambda^0; V, \N_0(\Gamma), \l^0, J^0)$ as dga.
\item there exists an isotopy from $\tilde{\Lambda} \subset (\tilde{V}, \tilde{l}, \tilde{J})$ to $\Lambda^1 \subset (V, \l^1, J^1)$ fixed at the boundary.
\end{itemize}
Hence we can apply the arguments of the compact case to this setting : the cobordisms used to define the morphisms are constants at the boundary, and the homotopy to the trivial  cobordism can be realised without modification near the boundary. Hence all the curves counted satisfy a maximum principle in the $\t$-coordinate.
\end{proof}

\begin{rmk}
We emphasise that the cobordism  maps are not  defined for any path of tailored almost complex structure, but only for those with a nice behaviour near the boundary.
\end{rmk}

Using the construction of \cref{lem.cvx->sut}, this homology  becomes an invariant of manifolds with (smooth) convex boundary.

\begin{cor}\label{Cor.InvrtBordLisse} Let $(V, \xi)$ be a contact manifold with convex boundary, $X$ a contact vector field transverse to $\dd V$, and $\La$ a Legendrian transverse to $\dd V$ such that $\dd \La \subset \Ga_{X}$. Then the sutured homology  of the Legendrian obtained by the convex-sutured operation only depends on $(V, \xi, \Ga_X, \La)$.

 Let $\xi^u$ be a path of contact structures on $V$  such that $\dd V$ is convex for all $u$ ;  $X^u$ a path of contact vector fields transverses au boundary, and $\Lambda^u$ a path of Legendrians transverses to the boundary and  such that $\dd \La^u \subset \Ga_{X^u}$.
Then the homologies $LH(V, \xi^0, \Ga_{X^0}, \La^0)$ and $LH(V, \xi^1, \Ga_{X^1}, \La^1)$ are quasi-isomorphic, and the map only depends the homotopy class of the path $(\xi^u, \Ga_{X^u}, \La^u)$.
\end{cor}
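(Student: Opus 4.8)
The plan is to deduce both assertions from \cref{InvrtCyl} together with the convex-sutured construction of \cref{lem.cvx->sut}, the guiding principle being that all auxiliary data entering that construction can be chosen in connected (in fact contractible) families, so that changing them amounts to a sutured Legendrian isotopy of the kind handled by \cref{InvrtCyl}.

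For the first assertion, I would first make explicit that \cref{lem.cvx->sut} associates to the fixed data $(V, \xi, \Ga_X, \La)$ a sutured contact manifold $(\check V, \Ga, \check\l)$ with a cylindrical Legendrian $\check \La$ only after a number of choices: an adapted contact form, the normalising coordinates of \cref{lem.cvx_st_2}, the interpolating functions $g,h$, the excision parameter $\e$, and a tailored almost complex structure. I would collect these into a space $\mathfrak{C}$ of admissible choices and check that it is contractible, using that the contact and Liouville conditions are open, that the functions involved range over convex sets, and that adapted contact forms and tailored almost complex structures form contractible spaces (as noted in \cref{ssec.Struct_p-complexe}). Any two points of $\mathfrak{C}$ are then joined by a path, which produces a path of sutured contact manifolds and of cylindrical Legendrians, all sharing the same boundary $\dd \check\La = \La \pitchfork \dd V \subset \Ga_X$; the second part of \cref{InvrtCyl} turns this into a quasi-isomorphism, and contractibility of $\mathfrak{C}$ makes this map canonical. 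Hence $LH$ depends only on $(V, \xi, \Ga_X, \La)$.

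For the second assertion, I would run the construction of \cref{lem.cvx->sut} parametrically along the given path $(\xi^u, X^u, \La^u)$: the normalisation near $\Ga_{X^u}$ via \cref{lem.cvx_st_2}, the Gray-type straightening, the choice of $h$, and the excision of $\{\tilde r < \e/2\}$ can each be performed smoothly in $u$, yielding a path of sutured data $(\check V, \Ga^u, \check\l^u, \check\La^u)$ with $\check\La^u$ cylindrical and $\dd$-non-degenerate. Feeding this path into the second part of \cref{InvrtCyl} gives the quasi-isomorphism between $LH(V, \xi^0, \Ga_{X^0}, \La^0)$ and $LH(V, \xi^1, \Ga_{X^1}, \La^1)$, whose homotopy class depends only on that of the path of sutured data, hence — by naturality of the parametric construction — only on the homotopy class of $(\xi^u, \Ga_{X^u}, \La^u)$.

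The main obstacle I anticipate is precisely this parametric, endpoint-fixed version of \cref{lem.cvx->sut}: one must verify that the generic perturbations it relies on — transversality of $\La^u \cap \dd V$, the condition $\La^u \pitchfork \dd V \subset \Ga_{X^u}$ supplied by \cref{bord_leg_reliure}, and the $\dd$-non-degeneracy demanded by \cref{InvrtCyl} — can be arranged simultaneously for the whole family while leaving the prescribed endpoints $u=0,1$ untouched. This is a standard but somewhat delicate parametric transversality argument; once it is secured, the necessary compactness is already provided by the maximum principles of \cref{PrincMaxSut}, and the remaining algebra is identical to that in the proof of \cref{InvrtCyl}.
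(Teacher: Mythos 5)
Your proposal is correct and follows essentially the same route as the paper: the paper's own (much terser) proof also reduces everything to \cref{InvrtCyl} by observing that the set of contact vector fields transverse to $\dd V$ is convex, hence the space of sutures is contractible, and that the convex-sutured operation is continuous in the data, so a path of convex structures yields a path of sutured Legendrians. Your more detailed bookkeeping of the space of auxiliary choices, and your flagged concern about endpoint-fixed parametric genericity, are refinements of the same argument rather than a different one.
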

In particular if $\La \subset (V, \xi)$ is a Legendrian without boundary, then the homology does not depend on the suture.

Moreover, and as noticed in \cite{CGHH10_Sutures}, for disjoint Legendrians $\La, \La_0 \subset V$ the sutured homology  $LH(\La, V \sms \N(\La_0))$ is an invariant of $(\La, \La_0, V)$, where $\N(\La_0)$ is a standard neighbourhood with convex boundary, disjoint from $\La$.

\begin{proof}
The set of sutures is contractible, because the set of contact vector fields transverse to $\dd V$ also is : if $X_0, X_1$ are two contact vector field transverse to the boundary, the vector fields
\[ X_t = t X_1 + (1-t) X_0, t \in [0,1]  \]
stays contact and transverse to the boundary. 
Moreover the convex-sutured operation is continuous relatively to a path of contact structures adapted to the boundary. In other word a path of such structures yields a path of sutured contact manifolds.
\end{proof}

\subsubsection{Wrapped sutured homology }

Similarly to \cite{AbSe09_Open_string_analogue} (see also \cite{Aur13_Intro_Fuk}), the wrapped complex can also be constructed as a colimit on partial wrappings.  
For a positive and total Hamiltonian $H_\ity$, we choose a sequence of Hamiltonians $(H_k)_{k\in\NN^*}$  such that 
\begin{itemize}
\item $H_k$ induces a positive wrapping (ie $H_k''\geq 0$) ;
\item $H_k = H_\ity$ on $[0, k]$ ;
\item $H_k'' = 0$ for $\t \geq k+ 1$.
\end{itemize}

We obtain that way a sequence of cylindrical Legendrians $\La^k$, and by \cref{PrincMaxEnrou} we have inclusions of complexes $LC(\La^k) \subset LC(\La^{k+1})$.
Hence we get 
\[\W LC (\La^\ity, V, \l; J, \sp, \T) \simeq \under{\colim}{k \ra \ity} LC(\La^k; V, \l; J, \sp, \T).\]

\begin{rmk}
This is still well-defined if we work with dgas, because the morphisms are only inclusions of dga.
\end{rmk}

Thus the wrapped homology does not depends on the choice of $H$. Furthermore, it is invariant along a path of sutured Legendrians

\begin{thm}\label{InvrtEnroul} Let $(V, \N_0^u(\Gamma), \xi^u)$ be a path of sutured manifold, and 
\begin{itemize}
    \item $\La_0^u \subset (V,\xi^u)$ a path of sutured Legendrians
    \item $\La_1^u \subset (V,\xi^u)$ a path of \emph{uncentered} Legendrians, so that 
    \[\pi_\Ga(\dd \La_0) \cap \pi_\Ga(\dd \La_1) \neq \emptyset \Ra \t(\dd\La_0) < \t(\La_1). \]
\end{itemize}
Pick $\l^0, \l^1$ contact forms adapted to $(V,\xi^0)$ and $(V, \xi^1)$, and $J^0,J^1$ tailored almost complex structures. 
Then, for properly chosen Hamiltonian $H^0,H^1$, there is a quasi-isomorphism 
\[ \W LC(\La_0^0, \La_1^0; V, \l^0, J^0, H^0) \lra \W LC(\La_0^1, \La_1^1; V, \l^1, J^1, H^1) \]
which only depends (up to homotopy) on the homotopy class of the  path $(\xi^u, \La^u)$. \\
In particular, the wrapped homology  $\W LH(\La_0, \La_1; \l, J, H)$ is independent of the choices of $\l$, $J$ and $H$.
\end{thm}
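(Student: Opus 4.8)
The plan is to deduce the statement from the cylindrical invariance of \cref{InvrtCyl}, using the colimit presentation
\[ \W LC(\La_0, \La_1; V, \l, J, H) \simeq \under{\colim}{k \ra \ity} LC(\La_0^k, \La_1; V, \l, J) \]
established just above, in which each partial wrapping $\La_0^k = \La_0^{H_k}$ is cylindrical at infinity (it is constant in $\t$ for $\t \geq k+1$, since $H_k'' = 0$ there) and the inclusions are induced by \cref{PrincMaxEnrou}. First I would fix, for every $u \in [0,1]$, a sequence $H_k^u$ of partial-wrapping Hamiltonians, nested in $k$ and interpolating between the chosen $H_k^0$ and $H_k^1$ in $u$, satisfying the three conditions listed before the theorem. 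The disjointness hypothesis on $\pi_\Ga(\dd \La_0^u)$ and $\pi_\Ga(\dd \La_1^u)$ guarantees that $\La_0^{k,u}$ never meets $\La_1^u$, so that $(\La_0^{k,u}, \La_1^u)$ is a legitimate path of pairs of (uncentered) cylindrical Legendrians, $\dd$-non-degenerate with totally non-degenerate endpoints.

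For each fixed $k$ I would then run the argument of \cref{InvrtCyl} on the pair $(\La_0^{k,u}, \La_1^u)$: its second step builds an intermediate sutured manifold $(\tilde V, \tilde\l, \tilde J)$ identifying the two complexes over $\{\t \geq 0\}$, after which the cobordism $(\RR_s \times V^*, d(e^s\l^{\phi(s)}))$ carrying an exact Lagrangian cobordism $L^k$ produces, by counting rigid holomorphic curves, a quasi-isomorphism
\[ \Phi^k : LC(\La_0^{k,0}, \La_1^0; V, \l^0, J^0) \lra LC(\La_0^{k,1}, \La_1^1; V, \l^1, J^1) \]
together with a homotopy inverse $\Psi^k$. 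The one analytic point not already contained in \cref{InvrtCyl} is that these curves remain in a compact set even though $\La_0^{k}$ is wrapped; this is exactly the corollary following \cref{PrincMaxEnrou}, which bounds both $\sup \t \circ F$ and $|t \circ F|$ for curves with boundary on a wrapped completion, so that Gromov compactness applies.

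Next I would verify that the family $(\Phi^k)_k$ commutes, up to homotopy, with the inclusions $\iota_k : LC(\La_0^k, \La_1) \hra LC(\La_0^{k+1}, \La_1)$, i.e.\ $\Phi^{k+1} \circ \iota_k \simeq \iota_k \circ \Phi^k$. Since both the $\iota_k$ and the cobordism maps count holomorphic curves, and $\iota_k$ respects the filtration by the $\t$-level at which the exterior chords sit, a filtered comparison of the relevant one-dimensional moduli spaces gives this commutation; passing to the (directed, hence exact) colimit then yields the desired quasi-isomorphism $\W \Phi$ of wrapped complexes. Its dependence only on the homotopy class of the path $(\xi^u, \La^u)$ is inherited level by level from the third step of the proof of \cref{InvrtCyl} and survives the colimit, and specialising to a constant path with varying auxiliary data gives the ``in particular'' independence of $\l$, $J$ and $H$.

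The hard part, I expect, will be the interaction between the wrapping and the moment when $\La_0^u$ slides below $\La_1^u$. Precisely when $\pi_\Ga(\dd \La_0^u)$ and $\pi_\Ga(\dd \La_1^u)$ begin to overlap, a $\RR_+$-family of exterior Reeb chords is created in the non-compact region $\{\t > 0\}$, and one must rule out cobordism curves escaping to infinity along this family; this is where the positivity of the wrapping enters, through the sign of the primitive $\psi$ of $(\l_\Ga)_{\rest T\La_1^H}$ used in the corollary to \cref{PrincMaxEnrou}. The second, more bookkeeping, obstacle is to choose the intermediate Hamiltonians $H_k^u$ and the cobordism Lagrangians $L^k$ coherently enough in $k$ that the naive commutation with $\iota_k$ becomes a genuine chain homotopy rather than merely a correspondence at the level of moduli spaces.
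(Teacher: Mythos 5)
Your reduction works in the boundary--fixed case, but it breaks down precisely in the situation the theorem is designed to cover, namely when $\dd\La_0^u$ moves and its projection to $\Ga$ crosses that of $\dd\La_1^u$. For a fixed finite wrapping $H_k$, the Legendrian $\La_0^{k,u}$ is cylindrical at infinity over the pushoff $\phi_{R_\Ga}^{c_k}(\dd\La_0^u)$ with $c_k = H_k'(k+1)$. Whenever, at some time $u^*$, this pushoff meets $\pi_\Ga(\dd\La_1^{u^*})$ --- equivalently, whenever the boundary pair in $(\Ga,\l_\Ga)$ acquires a Reeb chord of length exactly $c_k$ --- an entire $\RR_+$-family of mixed chords is born or dies in the cylindrical region $\{\t \geq k+1\}$. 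This is a non-generic degeneration at infinity: the path $(\La_0^{k,u},\La_1^u)$ is \emph{not} $\dd$-non-degenerate at $u^*$, and such degenerations occur for infinitely many $k$ as soon as the boundaries genuinely cross. So \cref{InvrtCyl} cannot be invoked for fixed $k$; your claimed cobordism maps $\Phi^k$ do not exist across these times, and the difficulty is not one of controlling curves (where positivity of the wrapping and the corollary to \cref{PrincMaxEnrou} indeed enter), but one of the generators themselves escaping to, or appearing from, $\t = \infty$.

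The paper circumvents this by a different mechanism: it does not isotope the partially wrapped Legendrians at all. Instead it realises the boundary isotopy as an exact Lagrangian cobordism $L_\Ga \subset [0,1]\times\Ga$ and builds a single interpolating Legendrian $\La_T$ equal to $(\La^0)^H$ on $\{\t\leq T\}$, to $\phi^{H'(T)}(L_\Ga)$ on $[T,T+1]$, and to a wrapping of $\dd\La^1$ beyond --- the isotopy is ``absorbed'' into the wrapping at horizontal level $T$. Since all chords of action $\leq A$ of $\La_T$ lie in $\{\t\leq T\}$ for $T$ large, and curves asymptotic to them stay there by \cref{PrincDuMaxCobord}, one gets the literal equality of truncated complexes $\W LC^A(\La^0)=\W LC^A(\La_T)$, while $\La_T$ differs from $(\La^1)^H$ only where the boundary-fixed case applies; the quasi-isomorphism is then the colimit over the \emph{action} $A$, not over the wrapping parameter $k$. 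If you want to salvage your scheme you would have to replace the isotopy of $(\La_0^{k,u},\La_1^u)$ by some such interpolation that keeps the crossing locus out of the sub-level sets of the action; as written, the second step of your argument has a genuine gap.
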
 

Let us emphasise that we don't require of $\La^u$ to be sutured during the isotopy : the projections of $\dd_i \La^u$ to $\Gamma$ {\it can intersect}, as long as $\dd_0 \La$ pass below $\dd_1 \La$.

\begin{proof}
If the boundary stays invariant during the isotopy, it induces an exact Lagrangian cobordism in $\RR \times V^*$ (\cref{ssec.ConstrCobordLagr}), which defines a quasi-isomorphism between the complexes. 

Now if the boundary is affected, we get an isotopy of the boundary $\dd_0 \La^s \subset (\Ga, \l_\Ga)$, so we can construct an exact Lagrangian  cobordism $L_\Ga \subset [0,1] \times \Ga$ interpolating between $\dd_0 \La^0$ and $\dd_0 \La^1$. After liffing to the contactisation, we get a Legendrian $\hat \La_0 \subset \{\t \leq 1\}$, of boundary $\dd \La^1$, that we wrap by an Hamiltonian $H$. Hence by the previous point 
\[ \W LC(\La^1) \simeq \W LC(\hat \La_0).  \]

To show that this complex is quasi-isomorphic to $LC(\La^0)$, we construct a sequence of Legendrians $\hat \La_T$, fixed at infinity (in the direction $\t$), and  such that 
\begin{itemize}
    \item $\hat \La_T$ coincides with $(\La^0)^H$ on $\{\t \leq T\}$,
    \item $\hat \La_T$ coincides with $(\La^1)^H$ on $\{\t \geq T+1\}$.
\end{itemize}
Loosely speaking, the isotopy of the boundary, which is finite, is "absorbed" by the wrapping.

To construct this sequence, we define
\begin{itemize}
    \item on $[0, T] \times \Ga$,  $L_T = \under{\cup}{\t \in [0, T]} \{\t\} \times \phi_{R_\Ga}^{H'(\t)}(\dd_0 \La^0)$
    \item on $[T, T+1] \times \Ga$, $L_T = \phi^{H'(T)}(L_\Ga)$
    \item on $[T+1, \ity)$, we wrap by Hamiltonian of $\t$.
\end{itemize}
We obtain an exact Lagrangian, which lifts to a Legendrian $\La_T \subset V^*$. Moreover, $(\l_\Ga)_{\rest L} = df$, where $f$ is a function of $\t$ on $\RR^+ \sms [T, T+1]$.
Hence for any $A > 0$, there exists $T > 0$ such that all chords of $\La_T$ of action bounded by $A$ are in $\{\t \leq T\}$. According to \cref{PrincDuMaxCobord} (which still holds for an immersed Lagrangian), a curve positively asymptotic to such a chord  stays in that area. Hence we get an equality of complexes
\[ \W LC^A(\La^0) = \W LC^A(\La_T). \]
Moreover, because $\La_T$ coincide with $(\La^1)^H$ at infinity, we also have 
\[\W LC(\La_T) \simeq \W LC(\La^1).\] 
We obtain the desired quasi-isomorphism by taking the colimit of those applications
\[ \W LC^A(\La^0) = \W LC^A(\L_T) \subset \W LC(\La_T) \over{\sim}{\lra} \W LC(\La^1)  \]
for $A \ra \ity$.
\end{proof}

\subsection{Exact sequence} \label{ssec.Exact_seq}

Similarly to \cite{Ekh09_Q-SFT_lin-CH_lagr-FH}, the cylindrical and wrapped homologies fit into an exact sequence which involves the homology of the boundary.
By \cref{PrincMaxSut}, interior Reeb chords form a subcomplex of the wrapped sutured complex, thus we define the {\it exterior complex} as
\[LC^\ext (\La_0, \La_1; \l, J, H) = \W LC (\La_0, \La_1; \l, J, H) / LC(\La_0, \La_1; \l, J). \]
We get an exact triangle  
 \[    \lra  L H(\La_0,\La_1; V, \xi) \lra \W L H(\Lambda_0 , \La_1 \La; V, \xi) \lra L H^\ext (\La_0, \La_1 \Lambda; \Gamma, \xi_\Ga) \over{[-1]}{\lra} 
\]
where the last map is a part of the differential of $ \W L H(\La_0, \La_1; V, \xi)$.

We now formulate some expected results about the exterior homology, which should extend the theorems of \cite{Ekh09_Q-SFT_lin-CH_lagr-FH}. In \cref{sec.Invariants_2-braids}, it will trivially true for the particular case at hand. 

\begin{conj}
\begin{enumerate}
    \item The previous quotient is the bilinearized homology of boundary :
    \[ LC^\ext (\La, \dd_0 \La, \dd_1 \La; \l, J, H) \simeq  LC_\e(\dd_0 \La, \dd_1 \La; \Ga, \l_\Ga, J_\Ga)[1] \]
    where this complex has coefficients in $H_1(\La)$, induced by the inclusion $\dd \La \hra \La$, and $\e$ are augmentations that will be defined in the next section. 
    \item The triangle is invariant in the following sens : let $\La^u$ be a path of sutured Legendrians such that 
    \begin{itemize}
         \item the boundary determines a Legendrian loop : $\dd_0 \La^0 = \dd_0 \La^1$ and $\dd_1 \La^0 = \dd_1 \La^1$ ;
        \item for $i \in \{0,1\}$ (ie at the ends of the path) $\La_i^0$ and $\La_i^1$ are hypertights.
    \end{itemize}
Then the following diagram commutes :
\[ \begin{xymatrix} {
     LH(\La_0^0, \La_1^0; V) \ar[r] \ar[d]_{F_*}^\wr & \W LH(\La^0_0, \La_1^0; V) \ar[r]^-{[-1]} \ar[d]_{F_*^W}^\wr & LH_\e(\dd \La_0, \dd \La_1; \Ga) \ar[r]^-{\d^0} \ar[d]_{F_*^\dd}^\wr & \\
      LH(\La_0^1, \La_0^1; V) \ar[r] & \W LH(\La_0^1, \La_1^1; V) \ar[r]^-{[-1]} & LH_\e(\dd \La_0^1, \dd \La_1^1; \Ga) \ar[r]^-{\d^1}  & }
    \end{xymatrix} \]    
    where the maps $F$ and $F^W$ come from \cref{InvrtCyl} and \cref{InvrtEnroul}, and $F^\dd$ is  induced by a  Lagrangian cobordism in $(\RR \times \Ga, e^s \l_\Ga)$, determined by the path $\dd \La^u \subset (\Ga, \l_\Ga)$. In particular if the boundary fixed along the path, $F^\dd_* = \Id$.
\end{enumerate}
\end{conj}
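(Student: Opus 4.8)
The plan is to prove the two parts in turn, treating the chain-level identification of Part~1 as the backbone and deducing Part~2 from it by functoriality of cobordism maps. The common geometric input is the behaviour of a tailored almost complex structure near the suture: by the projection property of tailored structures ($d\pi \circ J = J_\Ga \circ d\pi$ and $\l_\Ga \circ J = d\t$), every curve counted in the symplectisation of $V^*$ descends to a $J_\Ga$-holomorphic curve on $\RR^+_\t \times \Ga$, and by \cref{PrincMaxEnrou} the exterior chords and all curves asymptotic to them are confined to $\{\t \geq 0\}$. This localisation is what allows the exterior differential to be read off from data living on the suture.

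For Part~1 I would first match generators: by construction the exterior chords $\C(\dd \La_0, \dd \La_1; \Ga, \l_\Ga)$ of $\W LC$ are in canonical bijection with the mixed chords of the boundary pair, and a Conley--Zehnder computation relating the grading of a wrapped chord to that of its boundary chord should show that the wrapping contributes exactly one extra unit to the Maslov degree, accounting for the shift $[1]$. Next I would identify the differentials. The induced differential on the quotient $LC^\ext$ counts disks with a positive and a distinguished negative exterior puncture together with pure negative punctures on $\La_0^H$ and $\La_1^*$; projecting to $\RR^+_\t \times \Ga$ and stretching the neck along the suture, such a disk should degenerate into a bilinearised boundary disk in $\Ga$ whose pure punctures are capped by rigid disks with boundary on the fillings $\La_0,\La_1$. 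These capping disks are precisely the curves counted by the augmentations $\e_i$ produced on $\LC(\dd \La_i; \Ga, \l_\Ga)$ by the fillings via \cref{MThm.Wrapped_dga}, and the $\ZZ[H_1(\La)]$-coefficients appear because the capping paths are closed up inside $\La$ through $\dd\La \hra \La$. A gluing theorem in the reverse direction then yields a bijection between rigid configurations on the two sides, giving the chain isomorphism $LC^\ext \simeq LC_\e(\dd \La_0, \dd \La_1; \Ga, \l_\Ga)[1]$.

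For Part~2 I would use the cobordism maps already available. The isotopy $\La^u$ induces an exact Lagrangian cobordism in $\RR \times V^*$ as in \cref{ssec.ConstrCobordLagr}, whose count of rigid curves defines $F$ and $F^W$; restricting this cobordism to a neighbourhood of the suture and projecting gives the Lagrangian cobordism $\dd \La^u \subset (\RR \times \Ga, e^s \l_\Ga)$ defining $F^\dd$. The left square commutes because, by \cref{PrincMaxSut}, curves with a positive interior asymptotic stay interior, so $F^W$ preserves the subcomplex $LC$ and restricts there to $F$. The right square is the induced statement on quotients: projecting the cobordism curves to $\RR \times \Ga$ and capping their pure ends by the augmentation disks computes exactly $F^\dd$ under the identification of Part~1. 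When the boundary is fixed along the path the cobordism $\dd\La^u$ is the trivial cylinder $\RR \times \dd\La$, which induces the identity, so $F^\dd_* = \Id$.

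The hard part will be the gluing and degeneration analysis underlying Part~1: proving that the exterior differential factors, under neck-stretching along the suture, as a bilinearised boundary disk glued to augmentation disks from the fillings, and that this gluing is a \emph{transverse bijection} on rigid configurations. This is the sutured analogue of the SFT/Floer correspondence of \cite{Ekh09_Q-SFT_lin-CH_lagr-FH} and of the dga functoriality of \cite{EkOa15_Sympl_Cont_dga}, and carrying it out rigorously demands establishing the requisite compactness, transversality, and gluing results in the non-compact sutured geometry. It is precisely this analytic package that keeps the general statement at the level of a conjecture, even though the maximum principles of \cref{PrincMaxSut} and \cref{PrincMaxEnrou} already supply the compactness needed to make the count well defined.
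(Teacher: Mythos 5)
First, note that the statement you are addressing is stated in the paper as a \emph{conjecture}: the paper offers no proof of it in general, only the remark that it should extend \cite{Ekh09_Q-SFT_lin-CH_lagr-FH}, and a verification in the special case of \cref{sec.Invariants_2-braids} where homotopical restrictions (one chord per homotopy class, so all differentials and cobordism maps are essentially forced) make both points immediate. Your proposal is precisely the expected strategy --- match exterior generators with boundary chords, degenerate exterior disks by stretching along the suture into a bilinearised boundary disk capped by augmentation disks from the fillings, and deduce Part 2 by functoriality of cobordism maps --- and you are right, and honest, that the gluing and degeneration analysis is exactly what is missing. So the proposal does not close the gap any more than the paper does; it is a correct identification of the route and of the obstacle, not a proof.

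One point where the paper's own discussion sharpens your sketch: the compactness you invoke (\cref{PrincMaxSut}, \cref{PrincMaxEnrou}) bounds curves in the $\t$-direction, but the limit objects of your neck-stretching would be curves in the symplectisation of $(\Ga,\l_\Ga)$, i.e.\ curves unbounded in $\t$ --- precisely the situation the whole sutured framework is built to avoid, so the compactness package you cite does not cover the limits your argument needs. The remedy the paper points to is to replace the SFT-style model of the boundary homology by the one of \cite{EkOa15_Sympl_Cont_dga}, which uses only curves bounded in $\t$ at the cost of substantially more complicated moduli spaces; your degeneration argument would have to be routed through that model rather than through a literal neck-stretch. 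Separately, your one-line assertion that the wrapping shifts the Maslov degree by exactly one should be backed by an index computation: in the related construction of \cref{ssec.dga_wrapped} the chords created over the saddle point have \emph{unchanged} degree and only those over the maximum shift, so the source of the $[1]$ in Part 1 is not automatic (it is, however, consistent with the explicit degrees in \cref{ssec.proof}, where the exterior generators sit one degree above the corresponding boundary chords).
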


In the next section we will show that in our particular case,
the triangle is indeed invariant along a path of sutured Legendrians fixed at the boundary (the situation will be considerably simplified due to homotopical restrictions). \\

Before moving on to this example, let us note that, when $V$ is the contactisation of Liouville domain, the first and second points are consequences of Seidel's isomorphism \cite{Ekh09_Q-SFT_lin-CH_lagr-FH}, combined with \cite{Diri16_Lift} (see also \cite{BoCh12_Bilin_&_augm}). The situation is also reminiscent of \cite{PaRu20_Aug_&_Immers_lagr_fill} where, given a sutured Legendrian $\dd \La$ in $J^1(S^1)$, a map from the dga of $\dd \La$ to the dga of $\La$ is defined, see  \cref{ssec.dga_wrapped} for a generalisation of this construction.

The main issue toward a proof of those conjectures is that, while the contact homology of $(\dd \La, \Ga)$ involves curves in the symplectisation of $\Ga$, we did our best to avoid this situation, and only worked with curves bounded in the $\t$-direction. Luckily, there exists an alternate definition of the Chekanov-Eliashberg dga \cite{EkOa15_Sympl_Cont_dga}, which only involves (families of) Floer curves bounded in the $\t$-coordinate. However the moduli spaces used are significantly more complicated, and although no fundamental issue should arises, it does not fit the scope of our result.

\subsection{"Wrapping" the Legendrian dga} \label{ssec.dga_wrapped}
We now briefly explain how to adapt our constructions to the Chekanov-Eliashberg dga. Let us first recall the classical definition, which still holds in the sutured setting.
Consider a sutured Legendrian $\La \subset (V, \l, \Ga)$, and assume that $\l$ is hypertight to avoid dealing with Reeb orbits. Following \cite{Che02_Dga_leg_links}, its Chekanov-Eliashberg dga is defined by picking a tailored almost complex structure $J$, and counting holomorphic curves (with several negative punctures) in $\RR \times V^*$, with boundary in $\RR \times \La^*$.

More precisely, denote $\LC (\La, V, \l, J)$ the unital differential graded $\ZZ_2$-algebra, generated by the Reeb chords $\C(\La, V, \l)$ and the homological variables $\ZZ[H_1(\La,\ZZ)]$ (the chords do not commute with the homological elements), graded as in \cref{ssec.DfnInvariants}.
The differential is defined on Reeb chords by counting holomorphic curves
\[\dd c  = \under{\sum}{|w| = |c|-1} \# \M(c; w; V^*, \Lambda^*; J)\ w, \]
and extended by the Leibniz rule  $\dd (ab) = \dd a. b + (-1)^{|a|}a. \dd b $.

As in \cref{sssec.Cylindrical_sutured_homology}, this is indeed a differential, and the homology is independent of the choices made along the way. More generally, a path of sutured Legendrians $\La^u \subset (V, \xi^u)$ induces a quasi-isomorphism between the dgas (in the sense of \cite[Lemma 5.6]{EkOa15_Sympl_Cont_dga}), which only depends on the homotopy class of $(\xi^u, \La^u)$.

We now explain how to get a wrapped version of this dga. We first modify the contact form near the suture, as in \cite[§4.2]{CGHH10_Sutures} : on the neighbourhood
\[\N \simeq (-1,0]_\t \times [-1,1]_t \times \Ga, \]
take $\tilde \l = f(\t, t) dt + g(\t, t) \l_\Ga $, such that
\begin{itemize}
    \item the contact condition is satisfied : $g^{n-1}(g\dd_\t f - f \dd_\t g) > 0$ ;
    \item $\dd_\t f \geq 0$  ;
    \item near the boundary $f= 1$ and $g = e^ \t$ ;
    \item $g$ presents two cancelable critical points : a maximum at $(-2\e, 0)$ and a saddle point at $(-\e,0)$.
\end{itemize}

The Reeb vector field is directed by $X_g + \dd_\t f R_\Ga$, where $X_g$ is the Hamiltonian vector field associated to $g$, defined by $\i_{X_g} d\t \wedge dt = dg$, see \cref{fig.foliation_wrapped_dga}.

With this modification we have created two families of Reeb orbits, one for each critical point of $g$, which correspond to orbits of $(\Ga, \l_\Ga)$ (we also create other orbits around the maximum, but their action can be taken as big as needed). However there is also a grading shift happening, because for orbits the definition involves the dimension of the ambient manifold. For orbits over the saddle point, the Conley-Zehnder index is unchanged and the dimension increased, so the resulting degree increases by one. Similarly, the degree of an orbit above the maximum increases by two. To avoid dealing with this phenomena, we now assume that $(\Ga, \l_\Ga)$ is hypertight (for this construction this hypothesis is actually important, and not only a simplifying assumption). \\

The sutured Legendrian $\La \subset (V, \l, \Ga)$ stays Legendrian during this process,
and we have again created two families of Reeb chords, corresponding to chords of $\C(\La_\Ga, \Ga, \l_\Ga)$ (as well as other long chords which can again be ignored by an action-filtration argument). For a chord $c$ in $\Ga$, we will denote $\check c$ (resp. $\hat c$) the induced chord above the saddle point (resp. maximum). The degrees of the chords are given by :
\[ |\check c|_{V} = |c|_\Ga \qquad \qquad  |\hat{c}|_V +1 = |c|_\Ga.\]
Thus, and contrary the case of Reeb orbits, the degree is unchanged for the family of chords corresponding to the saddle point (compare also to \cite{CoEk17_Lagr_fillings_Complicated_unknots}, where the degree of the chords also increases).

\begin{figure}[h!]
\center
\def\svgwidth{15cm}
\import{./Dessins/}{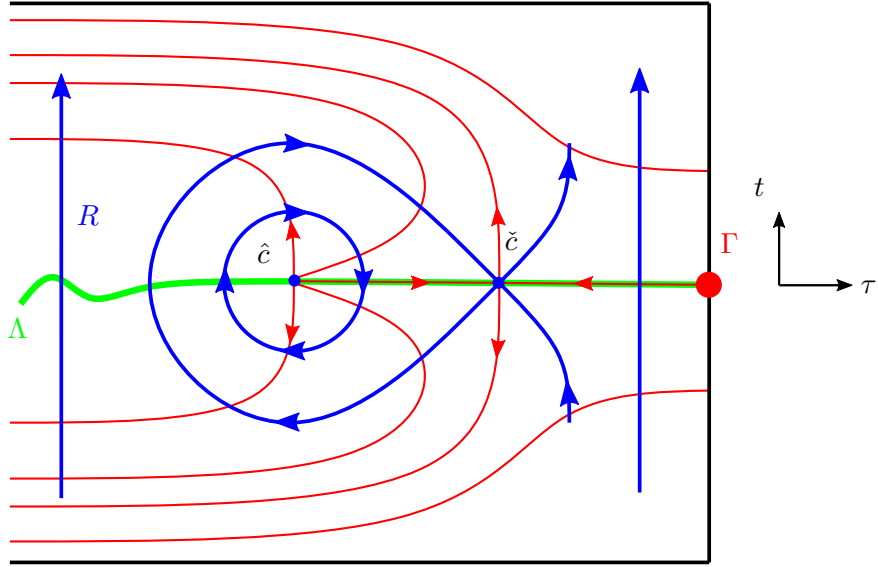}
\caption{Perturbation of the Reeb vector field, creating two families of chords. In green, the sutured Legendrian ; in blue, the projection $X_g$ of the Reeb vector field ; in red, the gradient lines of $g$, which lift to holomorphic hypersurfaces in the symplectisation. }
\label{fig.foliation_wrapped_dga}
\end{figure}

\paragraph{Holomorphic foliation.}
We now show that if the manifold is S-sutured, the generators $\check c$ form a sub-dga, by constructing an holomorphic foliation near the boundary. See also 
\cite{AsEk21_Chekanov-Eliashberg_dga_singular} for a stopped point of view (in our language, it means that their manifolds are balanced).

We first choose two almost complex structures $J_\pm$ on $(R_\pm, \b_\pm)$, which coincide on their common boundary $(\Ga, \l_\Ga)$, and such that $\b_\pm \circ J_\pm$ are exact 1-forms. Denoting $J_\Ga$ the restriction to $\xi_\Ga$, we construct a tailored almost complex structure on the symplectisation of $(V^*, \l^*)$, given on the perturbed area by
\begin{align*}
    J :\ & \dd_s \mt R = \mu(X_g + f_\t R_\Ga) & \qquad \text{where } \mu = 1/(g \dd_\t f - f \dd_\t g) > 0\\
    & \dd_\t \mt \dd_t - \frac fg R_\Ga & \\
    & \xi_\Ga \over{J_\Ga}{\lra} \xi_\Ga. 
\end{align*}
Away from the perturbation, ie on $\{\t < 3\e\}$, $J$ is defined by lifting the Stein almost complex structures. Thus the hypersurfaces $\{t = t_0, \t\leq 3\e\}$, where $t_0$ is close of $\pm1$, lift to holomorphic submanifolds of codimension 2 in $(\RR_s \times V^*,J)$. Indeed, the map 
\[ x \in R_\pm \mt (\bar s(x),t_0,x) \in (\RR_s \times \RR_t\times R_\pm,J)   \]
has an image whose tangent space is preserved by $J$ iff $d\bar s = \b \circ J_\pm$, which is exactly the Stein condition.

We now extend this foliation to the perturbed area. We compute 
\begin{align*}
    J R_\Ga & = J\big(g R + g\mu (g_\t (\dd_t - \frac fg R_\Ga) - g_t \dd_\t) \big) \\
    & = - g \dd_s + g\mu \big(- g_\t \dd_\t - g_t (\dd_t - \frac fg R_\Ga)\big)
\end{align*}

Thus the image of
\[ (r, y) \in (-3\e, 0) \times \Ga  \mt (\bar s(r), \t = h(r), t =t_0+r,  y) \in \RR \times V^* \]
is $J$-holomorphic iff $\dd_s + \mu (g_\t \dd_t + g_t \dd_t)$ is tangent to it. We take
\begin{itemize}
    \item $h'(r) = g_\t/g_t$, in other words the map projects to a gradient trajectories of $g$ in the $(\t, t)$ coordinates ;
    \item $\bar s'(r) = \frac1{\mu g_t}$, so $s$ goes to infinity as $r$ increases to zero.
\end{itemize}
Those maps extend the previous lifts of ${\t_0} \times R_\pm$, and by translating in the symplectisation direction, we get a $\RR$-family of codimension 2 holomorphic submanifolds in $(\RR_s \times V^*,J)$.

Moreover, $\RR_s \times \{\t=-2\e, \t=0\} \times \Ga$ and $\RR_s \times \{\t=-\e, t=0\} \times \Ga$ are also holomorphic, and project to the critical points of $g$. Finally the image of the map
\[(s,y) \in \RR \times \Ga \mt (s, \bar \t(s), t=0, y) \in \RR \times V^*\]
is holomorphic iff $\bar \t' = - \mu g_\t$, thus by translating in the symplectisation direction we get two $\RR$-families of codimension 2 holomorphic submanifolds : one projecting to the gradient line $\{-2\e \leq \t \leq u, t=0\}$, such that $s$ goes to $+\ity$ (resp. $-\ity$) as $\t$ goes to $-2\e$ (resp. $-\e$) ; another projecting to $\{\t > -\e, t=0\}$, such that $s$ goes to $-\ity$ (resp. $+\ity$) as $\t$ goes to $-\e$ (resp. $+\ity$).

Hence we get a codimension 2 holomorphic foliation of $(\RR \times V^*,J)$ on a neighbourhood of the boundary (and which extend to the completion), see \cref{fig.foliation_wrapped_dga}. By positivity of the intersections, and using the results of \cite{Sie11_Intersect_th_curves} (see also \cite{Roux17_Weinst_conj}), an holomorphic curve positively asymptotic to a chord $\check c$ must be entirely contained in $\{\t=0, t = -\e\}$. 

Thus the generators $\check c$ form a sub-dga of $\LC(\La, V, \tilde \l)$, isomorphic to $\LC(\dd \La, \Ga, \l_\Ga)$, which yields the following result

\begin{thm}
For a contact manifold S-sutured, endowed with an hypertight contact form, we get an inclusion of dgas
\[ \LC(\dd \La, \Ga, \l_\Ga) \lra \LC(\La, V, \tilde \l).\]
In particular, an augmentation on $\LC(\La, V, \tilde \l)$ induces an augmentation on the dga of the boundary $\LC(\dd \La, \Ga, \l_\Ga)$. 
Moreover an isotopy $\La^u$ of sutured Legendrians fixed along the boundary yields a commutative diagram
\[\begin{xymatrix}{
    \L C(\dd \La, \Ga, \l_\Ga) \ar[r] \ar[d]_{\Id} & \L C(\La^0, V, \tilde \l) \ar[d]^\sim\\
    \L C(\dd \La, \Ga, \l_\Ga)\ar[r] & \L C(\La^1, V, \tilde \l). }
    \end{xymatrix}\]
\end{thm}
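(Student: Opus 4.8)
The plan is to treat the three assertions in turn, building everything on the codimension-$2$ holomorphic foliation constructed above; the geometric heart of the argument is already in place, so the proof mainly consists in extracting the algebraic consequences. First I would prove that the generators $\check c$ span a sub-dga. Near the boundary the symplectisation $(\RR_s \times V^*, J)$ carries the holomorphic foliation whose leaves project, in the $(\t,t)$-plane, to the gradient lines and the critical points of $g$; in particular the chords $\check c$ all lie in the single leaf $\RR_s \times \{\t=0,\,t=-\e\} \times \Ga$, which is itself a holomorphic submanifold. Given $F \in \M(\check c; w; V^*, \La^*)$, its positive puncture is asymptotic to $\check c$ and so enters this leaf. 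Since the remaining leaves are holomorphic and disjoint from it, positivity of intersection (in the sense of \cite{Sie11_Intersect_th_curves}, see also \cite{Roux17_Weinst_conj}) forces $F$ to meet the ambient leaves with non-negative, hence vanishing, intersection number, so $F$ is entirely contained in the leaf through $\check c$. Consequently every negative asymptotic of $F$ is again a chord of $\check c$-type and $w$ is a word in the $\check c$-generators and in $H_1(\La)$, which shows that $\dd$ preserves $\langle \check c \rangle$.

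Next I would identify this sub-dga with $\LC(\dd \La, \Ga, \l_\Ga)$. On the leaf $\RR_s \times \{\t=0,\,t=-\e\} \times \Ga$ the vanishing of $X_g$ at the saddle point makes $J$ restrict to a $J_\Ga$-adapted structure for which $\dd_s$ maps to a positive multiple of $R_\Ga$; after reparametrising $s$, the confined curves are exactly the $J_\Ga$-holomorphic curves in the symplectisation of $(\Ga, \l_\Ga)$ with boundary on $\RR \times \dd \La$ that compute $\dd$ in $\LC(\dd \La, \Ga, \l_\Ga)$, and each such curve lifts uniquely into the leaf. This bijection respects asymptotics and homological data (transported along $H_1(\dd \La) \ra H_1(\La)$), and the degree identity $|\check c|_V = |c|_\Ga$ established above shows it is grading-preserving. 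We thus obtain the desired inclusion of dgas $\LC(\dd \La, \Ga, \l_\Ga) \hra \LC(\La, V, \tilde \l)$, and precomposing an augmentation of $\LC(\La, V, \tilde \l)$ with it yields an augmentation of the boundary dga, giving the second assertion for free.

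For the commutative square, the right vertical map is the quasi-isomorphism supplied by the dga-analogue of \cref{InvrtCyl} applied to the isotopy $\La^u$. Because the isotopy fixes $\dd \La$, the interpolating exact Lagrangian cobordism is a product $\RR_s \times (\RR \times \dd \La)$ near the suture, so the foliation extends over the cobordism and, by the same positivity argument, confines every curve asymptotic to a $\check c$-chord to the product leaf. There the cobordism reduces to the trivial cobordism over $(\Ga, \l_\Ga)$, whose induced map on $\LC(\dd \La, \Ga, \l_\Ga)$ is the identity; this is precisely the commutativity of the square with left vertical map $\Id$.

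I expect the confinement step to be the main obstacle. Making the positivity-of-intersection argument rigorous for a punctured curve with Lagrangian boundary requires the $S$-sutured hypothesis, so that the leaves are genuinely $J$-holomorphic via the Stein primitives $\b_\pm \circ J_\pm = d\phi_\pm$, together with enough control of the asymptotics at the punctures to invoke Siefring's relative intersection theory and compatibility of the boundary condition $\RR \times \La^*$ with the foliation. The hypertightness of $(\Ga, \l_\Ga)$ is what lets us discard the Reeb orbits created over the critical points and sidestep the grading shifts, while an action-filtration argument removes the long chords and the $\hat c$-family from all the moduli spaces involved.
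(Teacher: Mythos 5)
Your proposal follows essentially the same route as the paper: the codimension-$2$ holomorphic foliation built from the Stein primitives $\b_\pm \circ J_\pm = d\phi_\pm$, confinement of any curve with positive puncture at a $\check c$-chord to the leaf over the saddle point via positivity of intersections à la Siefring, identification of that leaf with the symplectisation of $(\Ga,\l_\Ga)$ so the confined curves compute $\LC(\dd\La,\Ga,\l_\Ga)$ with the degree match $|\check c|_V = |c|_\Ga$, and the product-cobordism argument for the commutative square. The only additions beyond what the paper writes out are welcome elaborations (the reparametrisation of $s$ on the saddle leaf, the explicit reduction of the isotopy cobordism to the trivial one near the suture), not a different method.
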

Note that the first point already appeared in \cite{PaRu20_Aug_&_Immers_lagr_fill} for $1$-jet spaces, where it was proven using Morse flow trees, and in
 \cite{AsEk21_Chekanov-Eliashberg_dga_singular} for balanced manifolds, where the proof involved using a similar foliation for stopped Weinstein manifolds.

If the boundary $(\dd \La, \Ga, \l_\Ga)$ is relatively hypertight, we can define the {\it wrapped Chekanov-Eliasberg dga} of $\La$ as the quotient by the ideal generated by the boundary chords $\check c$ :
\[ \W \LC(\La, V, \tilde \l) = \LC(\La, V, \tilde \l) /  \<\check c, c \in \C(\dd \La, \Ga, \l_\Ga)\>. \]
Although no actual wrapping appears in that construction, we still think about it as a wrapped dga. Indeed, adding the Reeb chords of the boundary at infinity would create a third family of chords, which would cancel out with the chords of the quotient. In other words, and loosely speaking, wrapping the boundary is equivalent to creating two families of chords in cancelling position, and taking the quotient by the induced sub-dga.

\section{An invariant of local 2-braids} \label{sec.Invariants_2-braids}

\subsection{Braid groups and conormal construction} \label{ssec.braids_gp}

For completeness sake, we first give a presentation of the group of 2-braids, in a surface $S$ of genre $g$, due to \cite{Bel01_Surf_braid}, see also \cite{Sco70_Braid}. 
\begin{dfn} Fix points $x, y \in S$. 
 A 2-braid is a pair of maps $f_1, f_2 : [-1, 1] \ra S$  such that 
\begin{itemize}
    \item $f_1(-1) = x, f_2(-1)=y$ and $\{f_1(1), f_2(1)\} = \{x, y\}$ 
    \item for any $t \in [-1, 1]$, $f_1(t) \neq f_2(t)$.
\end{itemize}
Its realisation is the submanifold 
\[\{(t, f_i(t), t \in [-1, 1], i\in \{1, 2\}\} \subset [1, 1] \times S.\]
A braid will be called {\it pure} if $f_1(1)= x$ and $f_2(1) = y$, ie if the induced permutation is trivial. 
\end{dfn}

We will denote $\B_2(S)$ the group of 2-braids up to homotopy, with the group law induced by the concatenation.

\begin{thm}\cite[Thm 5.4]{Bel01_Surf_braid}  The group of 2-braids is generated by $\s, a_i,$ where $i \in \{1, ..., 2g(S)\}$, with the relations 
\begin{itemize}
    \item $\s^{-1} a_i \s^{-1} a_i = a_i \s^{-1} a_i \s^{-1}$ 
    \item $a_i \s^{-1} a_j = \s a_j \s^{-1} a_i \s $ where $i < j$
    \item $\prod a_{2i-1} a_{2i}^{-1} \prod a^{-1}_{2i-1} a_{2i} = \s^2$
\end{itemize}
\end{thm}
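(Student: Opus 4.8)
The plan is to realise $\B_2(S)$ as the fundamental group of the unordered configuration space of two points on $S$ and to extract the presentation from the Fadell--Neuwirth fibration together with the extension by the symmetric group. Throughout I assume $g = g(S) \geq 1$, so that the universal cover of $S$ is $\RR^2$ and hence $\pi_k(S) = 0$ for $k \geq 2$; this is what the stated form of the relations (with $2g$ generators $a_i$) presupposes.

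First I would set $\B_2(S) = \pi_1\big(F_2(S)/\mathfrak{S}_2\big)$, where $F_2(S) = \{(x_1,x_2) \in S^2 : x_1 \neq x_2\}$ is the ordered configuration space and $\mathfrak{S}_2$ acts by swapping the two points. The projection $p : F_2(S) \to S$, $(x_1,x_2) \mapsto x_1$, is a locally trivial fibration with fibre $S \sms \{x_1\}$, a once-punctured genus-$g$ surface homotopy equivalent to a wedge of $2g$ circles. Because $\pi_2(S)=0$, the long exact sequence of this fibration collapses to the short exact sequence
\[ 1 \to \pi_1(S \sms \{pt\}) \to P_2(S) \to \pi_1(S) \to 1, \]
where $P_2(S) = \pi_1(F_2(S))$ is the pure braid group and the kernel is free on $2g$ generators, which I take as part of the generating set $a_1, \dots, a_{2g}$.

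Next I would identify the generators geometrically. The $a_i$ are the classes of braids in which one strand travels around a standard generating loop of $S$ while the other is held fixed; the half-twist $\s$ interchanging the two points inside a small disk generates the kernel $\ZZ/2$ of the quotient $\B_2(S) \twoheadrightarrow \mathfrak{S}_2$. A Reidemeister--Schreier computation over the two extensions above --- or, equivalently, the inductive scheme of Scott and Bellingeri --- produces the generating set $\{\s, a_1, \dots, a_{2g}\}$ and the commutation-type relations of the first two items, which simply record how a fibre loop is conjugated when transported past the half-twist and past another fibre loop.

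The crux of the argument, and the step I expect to be hardest, is the last relation $\prod a_{2i-1} a_{2i}^{-1} \prod a^{-1}_{2i-1} a_{2i} = \s^2$. Its mechanism is that the boundary loop of the once-punctured surface --- which, as an element of the free fibre group, is a product of commutators of the $a_i$ --- is null-homotopic in $S$ but \emph{not} in the braid group: when it is realised by the moving strand, the puncture is occupied by the fixed strand, so encircling it once is exactly the full twist $\s^2$ of the two strands rather than the identity. Making this precise requires a careful isotopy showing that the monodromy of $p$ around the puncture is the full twist, and that Bellingeri's choice of free basis turns this boundary word into the written expression; this comparison of conventions is the only genuinely delicate point, and establishing that the three families above generate \emph{all} relations is what the reference actually does. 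Since the statement is quoted from \cite{Bel01_Surf_braid}, I would in practice defer the completeness of this relation set to that reference.
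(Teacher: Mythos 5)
The paper does not prove this statement: it is quoted verbatim from \cite[Thm 5.4]{Bel01_Surf_braid} (with a pointer to \cite{Sco70_Braid}), so there is no in-paper argument to compare yours against. That said, your outline is the standard route and is essentially the one the cited reference follows: realise $\B_2(S)$ as $\pi_1$ of the unordered configuration space, use the Fadell--Neuwirth fibration $F_2(S)\to S$ with fibre the once-punctured surface, exploit asphericity of $S$ (your standing hypothesis $g\geq 1$ is consistent with the paper, which later excludes the sphere) to get the short exact sequence for $P_2(S)$, and then assemble a presentation from the two extensions, the crux being that the boundary word of the punctured fibre maps to the full twist $\s^2$ rather than to the identity. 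Two caveats. First, a slip of wording: $\s$ does not generate ``the kernel $\ZZ/2$ of $\B_2(S)\tra \mathfrak{S}_2$'' --- that kernel is $P_2(S)$; what you mean is that $\s$ maps onto the quotient $\mathfrak{S}_2\simeq\ZZ/2$, while $\s^2$ lies in $P_2(S)$. Second, and more substantively, your proposal explicitly defers the completeness of the relation set (that the three listed families generate \emph{all} relations) to the reference; that Reidemeister--Schreier/van Kampen bookkeeping is the actual content of Bellingeri's proof, so what you have is a correct roadmap rather than a self-contained argument. Since the paper itself treats the result as a black box, this level of detail is adequate for its role here, but it should be presented as a citation with geometric motivation, not as a proof.
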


For any $w \in \B_2(S)$, we denote $B_w$ geometric realisation, and a braid will be called {\it local} if it is a power of $\sigma$. Note that the subgroup of pure braids is the kernel of  $\d : \B_2(S) \ra \ZZ_2$, induced  by $\d(\s) = -1, \d(a_i) = \d(b_i) = 1$.
Moreover this subgroup is generated by (see again \cite{Bel01_Surf_braid}) :
\[t = \s^2, a_i, A_i = \s^{-1} a_i \s^{-1}\] 
with the relations \begin{itemize}
    \item $a_i A_i = A_i a_i$
    \item $a_i A_j = t A_j a_i$
    \item $\prod a_{2i-1} a_{2i}^{-1} \prod a^{-1}_{2i-1} a_{2i} = t$.
\end{itemize}

\subsection{Adapted contact form and 1-jet space} \label{ssec.Braid_1-jet}

Consider a surface $S$ which is {\it not} a sphere. We start by computing the linearised Legendrian homology of two fibers $\La_x, \La_y \subset US$, endowed with the contact form  $\l_S$ induced by a metric of constant courvature. 
Choose two generators $\mu_x \in H_1 (\La_x)$ and $\mu_y \in H_1(\La_y)$.
Since Reeb chords correspond to geodesics, there is one by element of $\pi_1(S)$. There is no contractible chord, hence there exists neither holomorphic disk or strip, because the (linearised) differential preserve the homotopy class. Hence
\[LH(\La_x , \La_y ; U_g S) = \under{\oplus}{\g \in \pi_1 (S)} \ZZ[\mu_x].c_\g.\ZZ[\mu_y] \]

\begin{rmk}
The non-linearised differential also vanishes : a geodesic minimises the length in its homotopy class, the length of a geodesic is the action of the corresponding Reeb chord, and the differential strictly decreases the action. 
\end{rmk}

\subsubsection{Sutured manifold}

We now study a braid $B \subset M=[-1,1] \times S$, such that the functions $f_i$ are constants on a neighbourhood of the boundary.  

As presented previously in \cref{ssec.ExVarSut}, the unit bundle of a manifold (with boundary) has convex boundary : the metric $g= dz^2 + g_S$ induces a contact form on $V =U_{g}M$, which is adapted to the suture $\dd I \times U_{g_S} S$. Moreover the unit conormal $\La_B \subset U(I \times S)$ is a Legendrian with boundary included in the dividing set.

However, we prefer to see this manifold of a the thickening of some convex hypersurface (which was also described in \cref{ssec.ExVarSut}).
Presenting the manifold as $V \simeq I \times (DS \cup DS)$, we fix coordinates
\[V =  \{(u, \nu; z, \eta) \in [-U-\e, U+\e] \times  \RR  \times S \times T_z S\ |\  \nu^2 + g(\eta)^2 = 1 \} \]
in which the contact form is $\l = -\nu du - g_S(\eta, dz)$. 
In other words 
\[ \l = - \nu du - \sqrt{1-\nu^2} \l_\Ga \]
where $\l_\Ga$ is the contact form on $U S$ associated to the metric $g_S$.
With those coordinates, the conormal of a strand $f : [-U, U] \ra S$ is a cylinder
\[ \La_f= \{(u, z= f(u), \nu, \eta), \nu+\dot f(u)\cdot \eta = 0\}  \]
In what follows, we will assume that the braid is constant outside of the area $\{\frac 35 U \leq u \leq \frac 45 U\}$.

\begin{rmk}
We can also think of $V$ as a compact contact manifold $\tilde V$, containing two Legendrian stops (we recover $V$ from $\tilde V$ by removing a standard neighbourhood). Then $\tilde V$ is contactomorphic to the open book of page $D^*\Si$ with trivial monodromy, and the stops are the skeletta of two pages. Moreover the conormal of each strand can be extended into a Legendrian sphere in $\tilde V$, intersecting each stop in exactly one point. 
\end{rmk}

We now construct a  contact form adapted to the sutured manifold, and such that the Legendrian is preserved. We define
\[ \tilde \l = -F(u, \nu)\nu du - G(u, \nu) \l_\Ga,  \]
where $F, G$ are smooth functions  such that
\begin{itemize}
    \item $G$ is  positive function ;
    \item the contact condition is satisfied ;
    \item if $|u| \leq U-\e$, then $F= G$ ;
    \item for $|\nu| \leq \e_\nu $ and $|u| \leq  U-\e$, $G = G(u)$ and presents a minimum at $0$ ;
    \item for $|\nu| \geq 3 \e_\nu$, we have $F=1$ and $G = \sqrt{1 - \nu^2}$ ;
    \item $G$ presents two maximums at $(\pm U\pm \e, 1)$ ;
    \item for $U-\e \leq |u| \leq $ and $|\nu| \leq \e_\nu $, we have $F = 1$ and $G = e^{\e_0 |u-U|}$.
\end{itemize}

The Hamiltonian vector field $X_G$ associated to this function is depicted on \cref{fig.Reeb_2-tresse}. The resulting sutured manifold is then 
\[\tilde V = \{G \leq G(U) \}. \]

The associated Reeb vector field is given by (see \cref{ssec.ExVarSut})
\begin{align*}
  R & = X_G+ \dd_\nu (\nu F) R_{US} \\
   & = X_G + (F + \nu \dd_\nu F) R_{US}, 
\end{align*}
so $\tilde{\l}$ is a contact form adapted to the sutured manifold.

The conormal of a strand $f : [-U-\e, U+e] \ra S$, constant outside of $\{\frac35 \leq u \leq \frac45\}$, is given by  
\[\La_f = \{(u, z=f(u), \nu, \eta) |\ \nu + \dot f(u) \cdot \eta = 0.   \}\]
Moreover, after increasing $U$, we can assume that this  Legendrian is included in the region $\{ |\nu|\leq \e_\nu\}$, hence it is unchanged along an isotopy between $\ker \l$ and $\ker \tilde \l$.

\begin{figure}[h!]
\center
\def\svgwidth{10cm}
\import{./Dessins/}{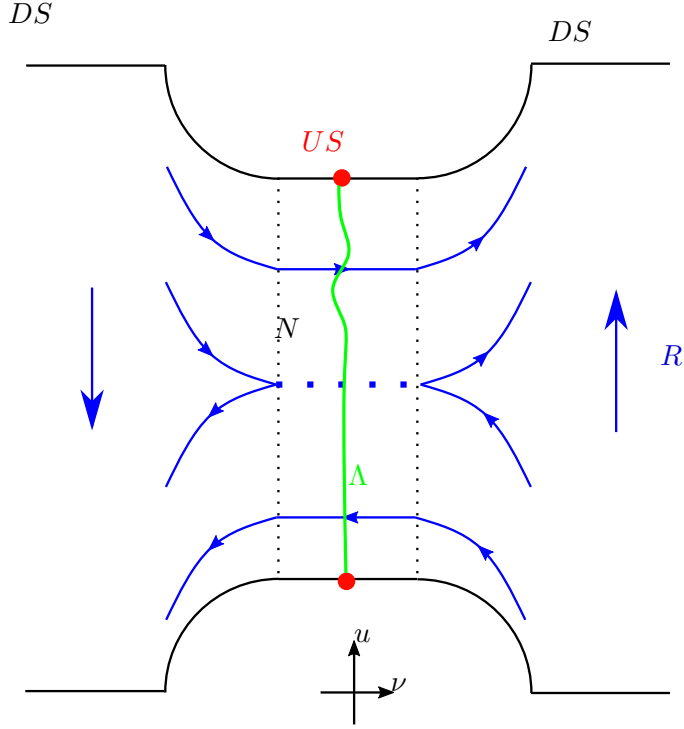}
\caption{The Reeb vector field induced by the contact form $\tilde \l$.}
\label{fig.Reeb_2-tresse}
\end{figure}

For any contact form $\tilde \l$, Gray's theorem yields a contactomorphism with $(J^1(\RR \times S^1), \xi_\st)$. However with the present construction, we can even construct an exact contactomorphism, so that the situation boils down to the 1-jets space $(J^1(\RR \times S^1), \l_\st)$. 
\begin{rmk}
The contact form $\tilde \l$ presents degenerates orbits, however Reeb chords are indeed non-degenerates.
\end{rmk}

Define a neighbourhood 
\begin{align*}
    N & = \{|\nu| \leq \e\} \\
    & \simeq [-U, U]_u \times [-\e, \e]_\nu  \times US ,
\end{align*}
on which $\tilde \l = - \nu F(u)du + G(u) \l_\Ga$. For $U$ big enough, $N$ contains the Legendrian ; we will show that we can restrict to this neighbourhood to compute the Legendrian homology.

\subsubsection{Completion}

Here we define coordinates on a neighbourhood of $\dd N$,  such that the contact form can be written as the contactisation of a symplectisation. 
We set 
\begin{align*}
   &  (-\e, 0]_\t \times [-\e, \e]_t \times US & \lra &\  N &   \\
 \psi_1 :\ & \qquad \qquad (t, \t, y, \zeta) & \mt\  &\  (u = U + \t, \nu = - \e_0 t, (z, \eta) = \phi^{\mu(\t, t)}_{US} (y, \zeta)), &   
\end{align*}
where $\phi^t_{US}$ is the flow of $R_{US}$. Then by taking $\mu(\t, t) = te^{-\e_0  \t}$, we get $\psi_1^*\tilde \l = dt + e^{\e_0 \t}\l_\Ga$. 

Indeed on a neighbourhood of $\dd N$ we have $\tilde \l =-\nu du + e^{\e_0 (u-U)} \l_\Ga$, and so we compute :
\begin{align*}
    \psi_1^*\tilde \l :\ & \dd_t \over{d\psi_1}{\mt} - \e_0 \dd_\nu + e^{-\e_0 \t} R_{US} \over{\tilde\l}{\mt} 1\\
    & \dd_\t \mt \dd_u -\e_0 te^{-\e_0 \t} R_{US} \mt \e_0t - \e_0 t = 0\\
     & \dd_{y} \mt (\phi_{US}^\mu)_* \dd_{y} \mt e^{\e_0 \t} (\phi_{US}^\mu)_* (\zeta \cdot \dd_y)  = e^{\e_0 \t} \zeta \cdot \dd_y\\
     & \dd_{\zeta} \mt (\phi_{US}^\mu)_* \dd_{\zeta} \mt 0
\end{align*}

There exists a similar map on the other end of $N$, consequently the completion of $N$ is 
\begin{align*}
    &  N^* = \RR \times (-\e_\nu, \e_\nu) \times US \\
    &  \l^* = - \nu F(u)du + G(u) \l_\Ga
\end{align*}
where $G = e^{\e_0 |u-U|}$ and $F= 1$ for $|u| \geq U-\e$. 

Moreover, the boundary ot the conormal of a strand ending at $z_0 \in S$ is the unit fiber $\{u=U, \nu=0, z=z_0\}$, whose preimage by $\psi_1$ is $\{t=0, \t=0, y = z_0\}$.
Hence the Legendrian completed by wrapping with an Hamiltonian $H$ is, in $N^*$ :
\[\La^H = \La \cup \big\{ \big(u, \nu = \e_0 f_H(|u-U|), z, \eta\big),\ |u|\geq U, (z, \eta) \in \phi_{US}^{\tilde g_H(|u-U|)} U_{z_0} S \big\},  \]
where $f_H$ and $\tilde g_H$ are given by (see \cref{ssec.Completions}) : 
\begin{align*}
    f_H(x) & = \int_0^x e^{\e_0 x} H''dx \\
    \tilde g_H(x)&  = H'(x) + \mu(x, -f_H(x)) = H' - f_H(x)  e^{-\e_0 x}  \\
    & =  H' - e^{-\e_0 x} \int e^{\e_0 x} H''.
\end{align*}

\subsubsection{Lifting to the plane}

We now lift the strands to paths $\hat f_i : [-U, U] \ra \RR^2$. The projection $\RR^2 \ra S$ induces a contactomorphism $ \Pi :  (U \RR^2, \ker \l_\eu) \ra (U\TT^2, \ker \l_{US})$, 
where $\l_\eu$ is the form induced by the flat metric $\RR^2$ (note that the forms induce by the flat and hyperbolic  metrics  on $\RR^2$ are proportionals).
We now lift the neighbourhood. Define
\begin{align*}
    & \hat N = [-U, U]_u \times [-\e, \e]_\nu \times U\RR^2_{(\hat z, \hat \eta)}  \\
    & \hat \l = - \nu F(u)du + G(u) \Pi^* \l_\Ga.
\end{align*}

\paragraph{Torus case.} 
We now assume that  $S = \TT^2$, hence $\Pi^* \l_{US} = \l_\eu$. A modification of the standard contactomorphism $(U\RR^2, \l_\eu) \simeq (J^1(S^1), \l_\st)$ make the situation boil down to  $J^1(\RR \times S^1)$ : define 
\begin{align*}
& J^1(\RR_a \times S_q^1) \ra \hat N \\
\psi_2 :\ & (s; a, q; \a, p) \mt \big(u=a, \hat z = \frac{p+sq}{G(a)}, \nu(a, \a); \hat \eta = q \big) 
\end{align*}
where $q \in S^1 \subset \RR^2$, $\a \in \RR$, and $p\in \RR^2$ is such that $p \cdot q = 0$.

\begin{lem}
By taking $\nu = \frac{G\a + G'}{GF}$, we get $\psi_2^* \hat \l  = ds - \a da - pdq $. Moreover the conormal of a strand $f : [-U, U] \ra S$ becomes the 1-jet of the function $h_f : \RR \times S^1 \ra \RR $, defined by
\begin{itemize}
    \item if $|u| \leq U, h_f = G(a) \hat f(a) \cdot q$ ;
    \item if $|a|\geq U, h_f  = e^{\e_0 |a - U|}  z_0 \cdot q  +\e_0 \int_0^{|a-U|} e^{\e_0 x} H' dx$.
\end{itemize}
where $\hat f$ is a lift $[-U, U] \ra \RR^2$.
\end{lem}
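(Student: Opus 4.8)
The plan is to prove two essentially independent statements: the pointwise identity $\psi_2^*\hat\l = ds - \a\,da - p\,dq$, which simultaneously pins down the rescaling function $\nu$, and the claim that $\psi_2^{-1}(\La_f)$ is the $1$-jet of the stated $h_f$. Both are direct computations organised around the two algebraic constraints carried by the plane coordinates, namely $q\cdot q = 1$ and $p\cdot q = 0$.

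For the pullback I would substitute $u=a$, $\hat\eta = q$ and $\hat z = (p+sq)/G(a)$ into $\hat\l = -\nu F\,du + G\,\Pi^*\l_\Ga$, using that in the torus case $\Pi^*\l_\Ga = \l_\eu = q\cdot d\hat z$. Differentiating $\hat z$ and contracting with $q$, the constraints $q\cdot q = 1$, $q\cdot dq = 0$ (from $|q|=1$) and $q\cdot dp = -\,p\cdot dq$ (from $p\cdot q = 0$) collapse the expression to
\[ G\,(q\cdot d\hat z) = ds - p\cdot dq - \frac{G'}{G}\,s\,da, \]
whence
\[ \psi_2^*\hat\l = ds - p\cdot dq - \Bigl(\nu F + \tfrac{G'}{G}\,s\Bigr)da. \]
The coefficients of $ds$ and $dq$ already match $ds - \a\,da - p\,dq$, and matching the $da$–coefficient with $-\a$ gives the equation $\nu F + \tfrac{G'}{G}\,s = \a$ that determines $\nu$. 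This is the entire content of the first claim; the only care needed is the bookkeeping of the $G'/G$ term coming from the $a$–dependent rescaling of $\hat z$ by $1/G(a)$.

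For the conormal I would argue that $\psi_2$ is a contactomorphism, so $\psi_2^{-1}(\La_f)$ is Legendrian, and that $(a,q)=(u,\hat\eta)$ are global coordinates on $\La_f$ (the fibre direction $\hat\eta$ together with $u$ parametrises the conormal of a single strand). Hence it suffices to compute the front, i.e.\ $s$ as a function of $(a,q)$, since a Legendrian whose front is the graph of a function is automatically its $1$-jet, and $\a$, $p$ need not be checked separately. Inverting $\hat z = (p+sq)/G$ by taking the dot product with $q$ gives $s = G\,\hat z\cdot q$. In the interior $|u|\le U$ one has $\hat z = \hat f(a)$, so $s = G(a)\,\hat f(a)\cdot q = h_f$. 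In the exterior $|a|\ge U$ the wrapped completion gives $\hat z = z_0 + \tilde g_H(|a-U|)\,q$ (flowing $U_{z_0}S$ by the flat Reeb, i.e.\ geodesic, flow), so $s = e^{\e_0|a-U|}\bigl(z_0\cdot q + \tilde g_H(|a-U|)\bigr)$; the claimed formula then follows from the integration–by–parts identity
\[ e^{\e_0 x}\tilde g_H(x) = \e_0\int_0^x e^{\e_0 x'}H'\,dx', \]
which uses $f_H(x) = \int_0^x e^{\e_0 x'}H''\,dx'$ and $H'(0)=0$.

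Since all computations are elementary, the main obstacle is conceptual bookkeeping rather than analytic difficulty: keeping the conventions for $\l_\eu$, for the sign in the conormal condition $\nu + \dot f\cdot\eta = 0$, and for the rescaling factor $G(a)$ mutually consistent, so that the $\nu$ produced by the pullback equation is exactly the one for which $\a = \dd_a h_f$ holds on $\La_f$. Once the front is identified and the Legendrian/$1$-jet principle is invoked, the derivative relations become automatic, and the only genuine calculation remaining is the single integration by parts above.
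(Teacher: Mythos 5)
Your proposal is correct and follows essentially the same route as the paper's proof: a direct pullback computation organised around the constraints $q\cdot q=1$ and $p\cdot q=0$ (the paper evaluates on coordinate vector fields where you push differentials through, which is the same calculation), followed by the observation that $\La_f$ is a graph over $(a,q)$ so that only the front $s=G\,\hat z\cdot q$ needs to be identified in the two regions, closed off by the same integration by parts $e^{\e_0 x}\tilde g_H(x)=\e_0\int_0^x e^{\e_0 x'}H'\,dx'$. One remark worth keeping: the $da$-coefficient equation you derive, $\nu F+\tfrac{G'}{G}s=\a$, retains an $s$ that the stated formula $\nu=\tfrac{G\a+G'}{GF}$ does not; the paper's own computation writes the term $\tfrac{G'}{G^{2}}G\,q\cdot(p+sq)$ correctly but then silently drops the resulting factor of $s$, so your more careful bookkeeping is the version under which the asserted identity $\psi_2^*\hat\l=ds-\a\,da-p\,dq$ actually holds.
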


\begin{proof}
In coordinates we have $\hat \l = - F(u) \nu du + G(u) \l_\eu$, and we compute
\begin{align*}
    \psi_2^* \hat \l : \ & \dd_s \over{d\psi_2}{\mt}\frac{1}{G}q\dd_z \over{\hat \l}{\mt} q.q = 1\\
    & \dd_a \mt \dd_u + * \dd_\nu - \frac{G'}{G^2}(p+sq)\dd_z \mt - \frac{G\a + G'}{GF}F -  \frac{G'}{G^2} G q \cdot(p+sq) \\
     & \quad \quad \quad = \a + \frac{G'}{G} - \frac{G'}{G} = \a \\
    & \dd_{q_i} \mt \frac sG \dd_{z_i} + \dd_{\eta_i} \mt s q \cdot \dd_{z_i} = 0
     \quad \text{ because } \dd_{q_i}\text{ is orthogonal to } q\\
    & \dd_\a \mt * \dd_\nu \mt 0\\
     & \dd_p \mt \frac{1}{G} p\dd_z + * \dd_\eta  \mt \frac 1G p \cdot q = 0.
\end{align*}

Now let $\La^H \subset N^*$ be the conormal of a strand $f : [-U, U] \ra S$, that we lift to $\hat \La^* \subset \hat N$. It is not necessary to explicit all equations to express $\psi_2^*(\hat \La)$, because the coordinate $s$ determine the Legendrian :
\begin{itemize}
    \item if $|a|\leq U$, we have $p+sq = G(a) \hat f(a)$, hence $s = (p+sq)\cdot q = G(a) \hat f(a) \cdot q$ ;
    \item if $|a| \geq U$, we have \[\phi_{U\RR^2}^x (U_{\hat z_0} \RR^2) = \{(\hat z, \hat \eta) |\ |\hat z - \hat z_0| = x, \hat \eta = \frac{\hat z- \hat z_0}{|\hat z - \hat z_0|} \},\]
    hence when  $(z, \eta) = (\frac{p+sq}{G}, q)$, the condition $(\hat z, \hat \eta) \in \phi_{US}^{\tilde g_H} (U_{\hat z_0} \RR^2)$ implies
    \begin{align*}
        \tilde g_H & =  |z - z_0| = (z- z_0) \cdot \frac{z-z_0}{|z - z_0|} \\
         & = (\frac{p+sq}G - z_0)\cdot q = \frac s G - z_0 \cdot q.
    \end{align*}
    On this region $G = e^{\e_0 (|a-U|}$, so 
    \begin{align*}
        s & =    G(\tilde g_H + z_0 \cdot q) \\
        & =  e^{\e_0 |a - U|} \big( H'(|a-U|) + z_0 \cdot q \big) - f_H(|a-U|).
\end{align*}
Moreover $e^{\e_0 x} H'- \int_0^x e^{\e_0 x} H'' = \e_0 \int e^{\e_0 x} H'$.
\end{itemize}
\end{proof}

\paragraph{Higher genus.} When $g(S) > 1$, ie if the surface is not a torus anymore, lifting the contact form (induced by a metric with constant curvature) from $U_g S$ to $U\RR^2 \simeq J^1(S^1)$ yields a contact form which differs of the standard contact form by a conformal factor. 
To use the results of the euclidean case. we interpolate toward the standard form. In each degree, there is a chord by homotopy class, hence the map induced by a cobordism is a trivial isomorphism trivial (at the chain level). Once again we get two Legendrians : the zero section and the 1-jet of a function $h = G(a) \hat f(a) \cdot q + \tilde H (a)$.

\begin{rmk}
If $S$ is not a torus, we can still use this description by taking two strands very close (and by fixing the perturbation making the manifold sutured).
\end{rmk}

\subsection{Proof of the theorem} \label{ssec.proof}
We compute the exact triangle associated to the local braid $B_{t^k}$. Pick a point $z_0 \in S$, and choose the braid given by 
\begin{align*}
    & f_1 : u \mt z_0 + \e_1 e^{i\eta_k(u)}\\
    & f_2 : u \mt z_0 
\end{align*}
where $\eta_k : [-U, U] \ra \RR$ is
\begin{itemize}
    \item increasing (resp.  decreasing) if $k \geq 0$ (resp. $k \leq 0$) ;
    \item vanishing on $[-U, \frac {3U} 5]$ ;
    \item evaluating to $2\pi k$ on $[\frac{4U}5, U]$.
\end{itemize}
Complete the Legendrian $\La_1 \cup \La_2 = U_{B_1} (I \times S) \cup U_{B_2}(I \times S)$, by wrapping $\La_1$  using Hamiltonian $H$, so it is included in $N^*$. Pick a lift to $\RR^2$ such that $0$ projects to $z_0$, and $(\e_1, 0)$ projects to $z_0 + \e_1$, the end of $B_1$. Then $\psi_2^*(\hat \La) \subset J^1(\RR \times S^1)$ is the 1-jet of the functions 
\begin{align*}
     h_1 (a, q) & = G(a) \e_1 \cos (\th + \eta_k(a))  & \text{if } |u| \leq U \\
        & \quad \ \ \e_1 e^{\e_0 |a-U|} \cos \th + \e_0 \int_0^{|a-U|} e^{\e_0 x} H'  &  \text{if } |u| \geq U\\
     h_2 (a, q) & = 0   
\end{align*}
where $q = e^{i\th} \in \RR^2$.

Since the homolomorphic strips in $J^1(S^1 \times \RR)$, with boundary in $\La_1 \cap \La_2$, correspond to Morse trajectories of $h = h_2-h_1$, we define $\tilde H = \e_0 \int_0^x e^{\e_0 x} H'$, and we compute
\begin{align*}
    dh_1 & = \e_1 \big( \cos (\th + \eta_k) G' - \sin(\th + \eta_k)\eta' \big)da - \e_1 \sin(\th + \eta_k) d\th &  \text{ if } |a| \leq U & \\
     & \quad \ \ \e_0\big(\e_1 e^{\e_0|a-U|} \cos \th + \tilde H'(|a-U|)\big)da - \e_1 e^{|a-U|} \sin \th d\th  &  \text{ if } |a| \geq U. &
\end{align*}
This differential vanishes when $\sin (\th + \eta_k)=0$ and $G'=0$, if $|a| \leq U$, or when $\sin \th = 0 $ and $\e_1 e^{\e_0 |a-U|} + \tilde H'(|a-U|)= 0$. The function $h$ presents three positive critical points, which we denote $c_0 \in \{|a| \leq U\}$ and $c_\pm \in \{\pm a \geq U\}$, and there exists a unique rigid Morse trajectory of Morse from $c_\pm $ à $c_0$, in the neighbourhood of the path parametrized by $\th = \pi -\eta_k (a)$, see 
\cref{fig.Maple.Gradient_2tresse_1jet}.

\begin{figure}[h!]
\center
\def\svgwidth{14cm}

\begingroup%
  \makeatletter%
  \providecommand\color[2][]{%
    \errmessage{(Inkscape) Color is used for the text in Inkscape, but the package 'color.sty' is not loaded}%
    \renewcommand\color[2][]{}%
  }%
  \providecommand\transparent[1]{%
    \errmessage{(Inkscape) Transparency is used (non-zero) for the text in Inkscape, but the package 'transparent.sty' is not loaded}%
    \renewcommand\transparent[1]{}%
  }%
  \providecommand\rotatebox[2]{#2}%
  \newcommand*\fsize{\dimexpr\f@size pt\relax}%
  \newcommand*\lineheight[1]{\fontsize{\fsize}{#1\fsize}\selectfont}%
  \ifx\svgwidth\undefined%
    \setlength{\unitlength}{1732.37438965bp}%
    \ifx\svgscale\undefined%
      \relax%
    \else%
      \setlength{\unitlength}{\unitlength * \real{\svgscale}}%
    \fi%
  \else%
    \setlength{\unitlength}{\svgwidth}%
  \fi%
  \global\let\svgwidth\undefined%
  \global\let\svgscale\undefined%
  \makeatother%
  \begin{picture}(1,0.41291023)%
    \lineheight{1}%
    \setlength\tabcolsep{0pt}%
    \put(0,0){\includegraphics[width=\unitlength,page=1]{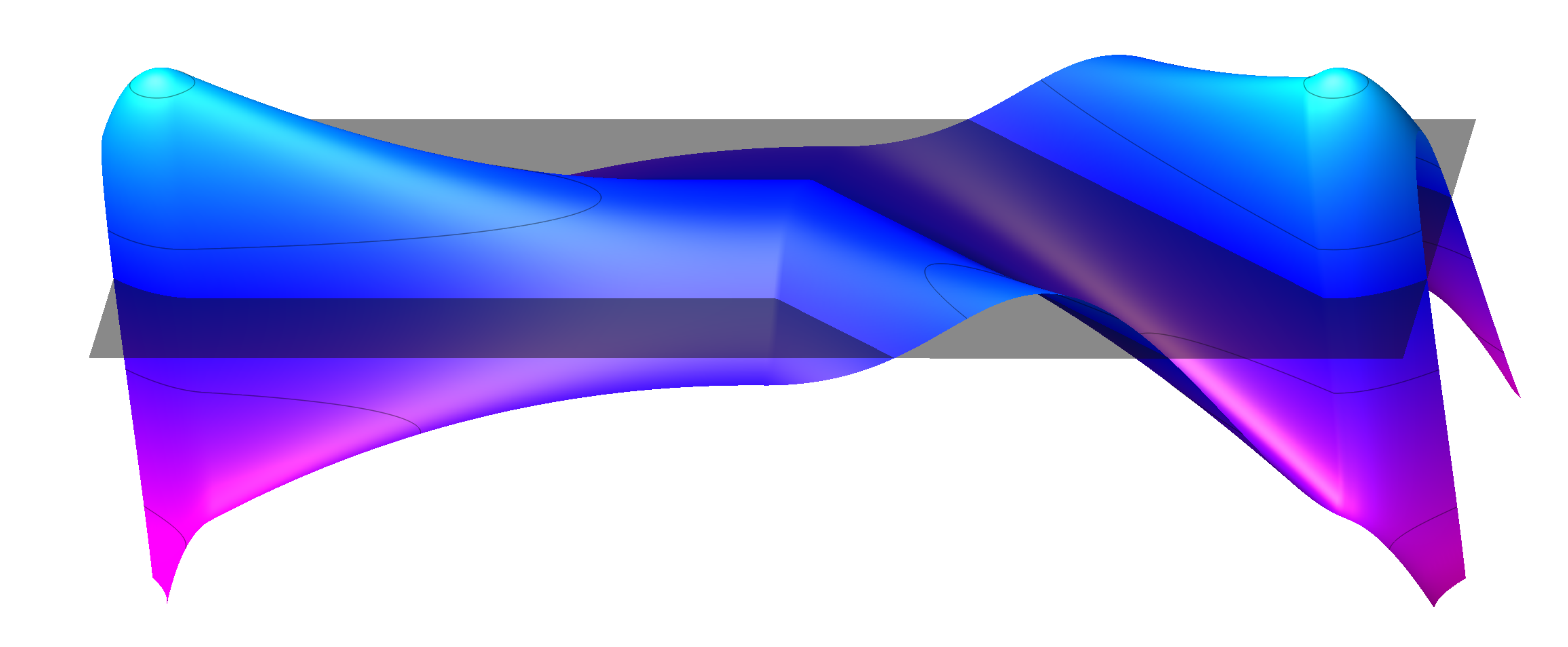}}%
    \put(0.98059164,0.14462577){\color[rgb]{0,0,0}\makebox(0,0)[lt]{\lineheight{1.25}\smash{\begin{tabular}[t]{l}$a$\end{tabular}}}}%
    \put(0.07023762,0.39294987){\color[rgb]{0,0,0}\makebox(0,0)[lt]{\lineheight{1.25}\smash{\begin{tabular}[t]{l}$\th$\end{tabular}}}}%
    \put(-0.00086248,0.35709311){\color[rgb]{0,0,0}\makebox(0,0)[lt]{\lineheight{1.25}\smash{\begin{tabular}[t]{l}$h$\end{tabular}}}}%
    \put(0,0){\includegraphics[width=\unitlength,page=2]{maple.tresse_1-jets.pdf}}%
    \put(0.11707309,0.37552907){\color[rgb]{0,0,0}\makebox(0,0)[lt]{\lineheight{1.25}\smash{\begin{tabular}[t]{l}$c^-$\end{tabular}}}}%
    \put(0.49395106,0.35215678){\color[rgb]{0,0,0}\makebox(0,0)[lt]{\lineheight{1.25}\smash{\begin{tabular}[t]{l}$c^0$\end{tabular}}}}%
    \put(0.84843064,0.37845057){\color[rgb]{0,0,0}\makebox(0,0)[lt]{\lineheight{1.25}\smash{\begin{tabular}[t]{l}$c^+$\end{tabular}}}}%
    \put(0,0){\includegraphics[width=\unitlength,page=3]{maple.tresse_1-jets.pdf}}%
  \end{picture}%
\endgroup%

\caption{The graph of the function $h$ (with the zero section in gray) for the braid with one twist. In red, its positive critical points and its gradient trajectories.}
\label{fig.Maple.Gradient_2tresse_1jet}
\end{figure}

\begin{lem}
For $\e_0$ small enough and $G$ close enough of $1$, we can count curves in $(N^*, \l^*)$
\end{lem}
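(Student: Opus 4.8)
The plan is to prove a \emph{confinement} result: the holomorphic curves computing the cylindrical, wrapped and exterior differentials of the pair $\La_1^H \cup \La_2$ all remain inside the neighbourhood $N$, so that the relevant counts may be performed directly in the model $(N^*, \l^*) \simeq J^1(\RR \times S^1)$ rather than in the full sutured completion $V^*$. Since all the Reeb chords between $\La_1^H$ and $\La_2$ (in particular $c_0, c_\pm$) lie in $\{|\nu| \le \e_\nu\}$, and the Legendrians themselves lie there, it suffices to produce a maximum principle in the $\nu$-direction forbidding a curve asymptotic to such chords from reaching $\{|\nu| = \e\}$, where the maxima of $G$ and the rest of the sutured topology live.

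First I would fix a tailored almost complex structure $J$ on $\RR_s \times V^*$ which, on $N^*$, is the lift via $\psi_2$ of the structure on $J^1(\RR \times S^1)$ used above, so that holomorphic strips in $N^*$ project to the Morse flow lines of $h = h_2 - h_1$. On the collar $\{\e_\nu \le |\nu| \le 3\e_\nu\}$, where the contact form interpolates between the product form $-\nu F(u)\,du + G(u)\,\l_\Ga$ and the round form $-\nu\,du - \sqrt{1-\nu^2}\,\l_\Ga$, I would arrange $J$ so that a suitable proper function $\rho = \rho(\nu)$ is $J$-plurisubharmonic, that is $-d(d\rho \circ J) \ge 0$. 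In the exact product model ($F \equiv G \equiv 1$, i.e.\ the straight symplectisation) this holds strictly, by the very computation of \cref{PrincMaxSympl} and the Stein case of \cref{PrincMaxSut}; the content of the hypotheses ``$\e_0$ small and $G$ close to $1$'' is that $\l^*$ is then $C^1$-close to this model over the relevant compact region, so the strict inequality survives by openness.

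With such a $\rho$, the argument of \cref{PrincMaxSut} applies verbatim: $\rho \circ F$ is subharmonic, hence has no interior maximum, and the Stokes/boundary computation of \cref{PrincMaxSympl} — using that $\l^*$ restricted to the exact, $\nu$-independent Legendrians is $df$ with $f$ controlled — rules out a boundary maximum as well. As the asymptotics lie in $\{|\nu| \le \e_\nu\}$, no curve can cross up to $|\nu| = \e$, so it never sees the part of $V^*$ outside $N^*$. Combined with the confinement in the $u$-direction already supplied by the wrapping maximum principle (the corollary following \cref{PrincMaxEnrou}) and the exactness of the wrapped Lagrangian in the $\t$-coordinate — together with the action/monotonicity bound of \cref{prop.Monotonicite_global} to control behaviour at infinity — this confines every relevant curve to a compact subset of $N^*$. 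Gromov compactness then holds, the moduli spaces are compact, and since both the generators and the confined curves coincide with those of the genuine sutured completion, the complexes computed in $(N^*, \l^*)$ agree with the true invariants.

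The main obstacle is the plurisubharmonicity on the transition collar $\{\e_\nu \le |\nu| \le 3\e_\nu\}$: away from it the form is either the straight contactisation (the model computation) or the round model (handled as in \cref{PrincMaxSut}), but inside the collar the interpolation of $F$ and $G$ couples the $u$- and $\nu$-directions, and one must check the cross terms do not spoil the sign of $-d(d\rho \circ J)$. This is exactly where the quantitative hypotheses enter: as $\e_0 \to 0$ and $G \to 1$ the cross terms are $O(\e_0) + O(\|G-1\|_{C^1})$, so the leading positive term inherited from the flat model dominates and $\rho$ remains strictly $J$-plurisubharmonic on the collar.
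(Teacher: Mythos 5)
Your approach is genuinely different from the paper's, and as written it has a gap. The paper's proof is a soft energy argument: the $d\l$-energy of a strip from $c_\pm$ to $c_0$ equals the action difference $\A(c_\pm)-\A(c_0)$, which tends to $0$ as $\e_0\to 0$ and $G\to 1$; the monotonicity statement \cref{prop.Monotonicite_global} then forces a curve of sufficiently small area to stay in an arbitrarily small neighbourhood of its asymptotics, hence in $N^*$, and the remaining long chords are discarded by an action-filtration argument. In other words, the hypotheses ``$\e_0$ small, $G$ close to $1$'' are used to control the \emph{energy} of the relevant curves, not the geometry of the region they might escape into.

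Your maximum principle in the $\nu$-direction does not close up. A plurisubharmonic $\rho(\nu)$ on the transition collar $\{\e_\nu\le|\nu|\le 3\e_\nu\}$ only excludes a local maximum of $\rho\circ F$ \emph{inside} the collar; it does not prevent the curve from crossing the collar and attaining its $\nu$-extremum in $\{|\nu|\ge 3\e_\nu\}$. On that region the contact form is the fixed round form $-\nu\,du-\sqrt{1-\nu^2}\,\l_\Ga$, independent of $\e_0$ and $G$, so the $C^1$-closeness/openness argument you invoke simply does not apply where the escape would happen; moreover a function of $\nu$ alone is not plurisubharmonic there for a general tailored $J$ --- this is precisely the difficulty that the Stein hypothesis, resp.\ \cref{PrincDuT-max}, is designed to address, and even those only confine curves to the full vertical extent of the sutured manifold, resp.\ give a bound depending on topological complexity, not confinement to an arbitrarily small neighbourhood $\{|\nu|\le\e_\nu\}$ of the chords. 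So the key quantitative step --- that the curves cannot enter the round region --- is not established. The confinement in the $u$-direction via the wrapping maximum principle is fine, but for the $\nu$-direction the action/monotonicity route of the paper is the argument that actually works.
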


\begin{proof}
Indeed the difference of actions between $c_\pm$ and $c_0$ converge toward $0$ when $\e_0$ goes to $0$ and $G$ goes to $1$. The monotonicity result \cref{prop.Monotonicite_global} implies that a curve a small energy stays in $N^*$.
Note that their might also be other long chords, because of the topology of the surface, however their action can be made as big as necessary so we can ignore them by a standard action-filtration argument.
\end{proof}

Hence the wrapped Legendrian complex is $\W LC(\La_1, \La_2) = C[1] \oplus C  \oplus C[1]$, and the differential is 
    \[\dd c_\g^0 = 0 \qquad \qquad  \dd c_\g^- = c_\g^0 \qquad \qquad    \dd c_\g^+ = \mu_x^{-k} c_\g^0 \mu_y^k \]
We now prove the theorem. If $\La_k$ and $\La_{k'}$ are isotopic (as Legendrians), we can glue them together to obtain an  isotopy between $\La_0$ and $\La_{k-k'}$. Thus we can assume, without loss of generality, that $k'=0$. The invariance of the exact triangle implies the commutativity of the following diagram
\[\begin{xymatrix}{
\ar[r] &  C \ar[r]^0 \ar[d]^{F_*} & C[1] \ar[r]^-{f_0} \ar[d]^{F_*^\W} & C[1] \oplus C[1] \ar[r]^-{\Id \oplus \d_0} \ar[d]^{\Id} &  C \ar[r]^0 \ar[d]^{F_*} &  \\
\ar[r] & C \ar[r]^0  & C[1] \ar[r]^-{f_k} & C[1] \oplus C[1] \ar[r]^-{\Id \oplus \d_k}  & C \ar[r]^0 & }
\end{xymatrix}\]
where the maps $f_0, f_k, \d_0$ and $\d_k$ are morphisms of $\ZZ[\mu_x]-\ZZ[\mu_y]$-bimodules, and
\[     f_0(c_\g) = (c_\g, c_\g), \qquad \d_0 = \Id, \qquad \d_k(c_\g) = \mu_x^{-k} c_\g \mu_y^k.\]
The kernel of $\Id \oplus \d_0$ is the anti-diagonal, generated by the elements $(c_\g , - c_\g)$, which implies that $k=0$. 

\subsection{Sketch in the general case}\label{ssec.Sketch_general_case}

We quickly present the strategy to handle the general case, which will be treated in a forthcoming paper. Given a pure braid $B$ with $n$ strands in $\Si$, our goal is to construct an automorphism of $\pi_1(\Si \sms \{x_1... x_n\})$ (up to inner automorphism). This is a complete invariants by Dehn–Nielsen–Baer theorem, see \cite[Theorem 8.1]{FarbMarg11_Primer_mapping_class_group}.

The main idea was mentioned in \cref{ssec.ExVarSut} : given two Legendrians $\La$ and $\La_0$ in a closed, compact contact manifold $(V, \xi)$, the Legendrian contact homology of $\La_0$ in \[V_
\La = V \sms \N(\La)\] is an invariant of the (ordered) pair of Legendrians. From another point of view, it is the contact homology of $\La_0$ in $(V, \xi)$ {\it stopped} at $\La$.

This construction should extend to the sutured setting : consider a sutured Legendrian $\La \subset (V, \l, \Ga)$. After removing a neighbourhood, we get a "2-sutured" contact manifold $V_\La$, with two kinds of boundary. We can complete it so that, at infinity, it looks like the contactisation of some {\it Liouville  sector} (for the appropriate notion of sector). Moreover, the exact sequence from \cref{ssec.Exact_seq} should be modified as follows. If $\La_0, \La_1$ are two other sutured Legendrians in $(V, \l, \Ga)$, we should get an inclusion as before
\[ LC(\La_0, \La_1; V_\La) \hra WLC(\La_0, \La_1; V_\La), \]
whose quotient is the exterior complex. For simple situations (notably when we don't need to augment the chords), the exterior homology should be isomorphic to the stopped homology of the boundary :
\[ LC^\ext(\La_0, \La_1; V_\La) \simeq LC(\dd \La_0, \dd \La_1; \Ga_{\dd \La}),  \]
and the exact sequence is an invariant (in the sense of \cref{ssec.Exact_seq}) under isotopy of $(\La_0, \La_1, \La)$ fixed at the boundary (note that the construction from \cref{ssec.dga_wrapped} might be more adapted).

We now take for sutured contact manifold the unit bundle 
$V = U(I \times \Si)$,
and for sutured Legendrian stop the conormal $\La = \La_B$, which is a collection of $n$ cylinders. Finally $\La_0$ and $\La_1$ are the conormals of two push-off of the first strand.

The boundary of $\La_B$ are unit fibers in the suture $\Ga = U\Si \sqcup U\Si$, and by definition the complement of the unit fiber over $x \in  \Si$ is $U(\Si \sms \{x\})$. Thus, the exterior homology is the relative Legendrian homology of two unit fibers in $U(\Si \sms \{x_1...x_n \})$, in other words 
\[ LH^\ext (\La_0, \La_1; V_\La) \simeq \ZZ[\pi_1(\Si \sms \{x_1...x_n\})] \oplus \ZZ[\pi_1(\Si \sms \{x_1...x_n\})].  \]
Moreover if we add the product structure obtained by counting pair of pants (notice that we can identify the chords of $\La_i$ with the mixed chords from $\La_0$ to $\La_1$), we can upgrade this to a ring isomorphism.

On the other hand, using techniques from \cite{Dat22_Conormal_stops_Hyperbolic_knots}, we should prove that, to compute Legendrian homology, we can replace $V_\La = U(I \times \Si) \sms \La_B$ by $U(I \times \Si \sms B)$. Hence, using the homotopical restrictions on the differential, we get the exact sequence
\[ \ra  C_0 \ra C \ra C_-[1] \oplus C_+[1] \ra, \]
where $C = C_\pm = C_0 = \ZZ[\pi_1(\Si \sms \{x_1...x_n\})]$, and the boundary map will be induced by the classical map raised by the braid $f_B \in \Aut (\pi_1(\Si \sms \{x_1...x_n\}) )$. As previously, we can recover this map using the invariance of the exact triangle.

\bibliographystyle{alpha}
\bibliography{references}

\end{document}